\documentclass[11pt,a4paper,reqno]{amsart}
\usepackage{xcolor}
\usepackage{geometry}
\usepackage{mathtools}
\usepackage{amsfonts}
\usepackage{amsmath, amssymb, amsthm}
\usepackage{hyperref}
\usepackage{graphicx}
\usepackage{bbm}
\usepackage{cite}
\usepackage{subfigure}
\usepackage{tikz}
\usetikzlibrary{patterns, arrows.meta}

\usepackage{epstopdf}
\usepackage{color}
\usepackage{amssymb}

\usepackage{amsthm}

\usepackage{mathrsfs}
\usepackage{dutchcal}
\usepackage{tikz}

\newcommand{\N}{\mathbb{N}}

\numberwithin{equation}{section}
\newtheorem{theorem}{Theorem}[section]
\newtheorem{lemma}[theorem]{Lemma}
\newtheorem{remark}[theorem]{Remark}

\newtheorem{proposition}[theorem]{Proposition}
\newtheorem{definition}[theorem]{Definition}
\newtheorem{example}[theorem]{Example}
\def\eqref#1{(\ref{#1})}
\allowdisplaybreaks
\def\enddoc{\end{document}}

\begin{document}
\author{Lu Hao}
\address{Universit\"{a}t Bielefeld, Fakult\"{a}t f\"{u}r Mathematik, Postfach 100131, D-33501, Bielefeld, Germany}
\email{lhao@math.uni-bielefeld.de}
	
\title[Wiener criterion at infinity on weighted graphs]{A Wiener criterion at infinity for $p$-massiveness on weighted graphs}

\thanks{\noindent L. Hao was funded by the Deutsche Forschungsgemeinschaft(DFG, German Research Foundation)-Project-ID317210226-SFB 1283.}

\subjclass[2020]{Primary 31B35, 31E05; Secondary 35J92}

\keywords{Wiener criterion, $p$-massive set, weighted graph, $p$-harmonic function}
\begin{abstract}
We study boundary value problems at infinity for the graph $p$-Laplacian on infinite, connected, locally finite weighted graphs. Our main result is a Wiener criterion for $p$-massiveness. Assuming volume doubling and a weak $(1,p)$-Poincar\'e inequality, we show that every infinite connected $p$-massive set satisfies a dyadic capacitary condition expressed through relative $p$-capacities in nested balls; under the additional $(p_0)$ condition, the converse also holds. This yields a nonlinear criterion at the point at infinity in a rough weighted-graph setting and extends the Wiener viewpoint to a nonlinear discrete framework. We also prove, without these geometric assumptions, that $p$-massiveness is equivalent to a strengthened nonuniqueness property for exterior Dirichlet problems. As a further consequence, bounded nonconstant $p$-harmonic functions are characterized by the existence of two disjoint massive sets. In this way, the Wiener criterion is placed in a broader and more flexible picture of exterior boundary behavior and Liouville-type phenomena on weighted graphs.
\end{abstract}

\maketitle

\section{Introduction}

Boundary behavior at infinity is a classical theme in potential theory. In the linear case it is closely tied to random walks, escape phenomena, and Wiener-type tests, while in the nonlinear setting it is related to capacitary criteria, massiveness, and exterior boundary value problems. On weighted graphs, these questions lie at the intersection of nonlinear potential theory, discrete analysis, and the geometry of the underlying space.

In this paper we study the exterior Dirichlet problem and the $p$-massiveness of subsets on weighted graphs. Our main result is a Wiener criterion at infinity for $p$-massive sets under the rough geometric assumptions \hyperref[VD]{$\text{(VD)}$} and \hyperref[PI]{$(P_p)$}. It is expressed directly in terms of the exterior geometry of the set through the relative $p$-capacities of a dyadic family of condensers. In addition, and without using \hyperref[VD]{$\text{(VD)}$} or \hyperref[PI]{$(P_p)$}, we identify $p$-massiveness with a strengthened nonuniqueness property for exterior Dirichlet problems and relate bounded nonconstant $p$-harmonic functions to the existence of two disjoint massive sets. Thus the Wiener criterion is connected with both exterior boundary value problems and Liouville-type phenomena for the graph $p$-Laplacian.

Let $G=(V,E)$ be an infinite, connected, locally finite graph. Here $V$ denotes the vertex set and $E$ denotes the edge set. If there exists an edge connecting $x$ and $y$, we write $x\sim y$.

Let $\mu: V\times V\rightarrow[0,\infty)$ be an edge weight, and denote it by $\mu_{xy}:=\mu(x,y)$. We assume that $\mu_{xy}>0$ if and only if $x\sim y$ and that $\mu_{xy}=\mu_{yx}$. The vertex weight is denoted by $m>0$.

For $p>1$, the $p$-Laplacian on the graph is defined by
$$
\Delta_p u(x) := \frac{1}{m(x)} \sum_{y \sim x} \mu_{xy} |\nabla_{xy} u|^{p-2} \nabla_{xy} u,
$$
where $\nabla_{xy} u = u(y) - u(x)$.

Let \( \ell(V) \) denote the set of all real functions on \(V\), \( \ell_0(V) \) the subset of functions with finite support, and \( \mathcal{B}(V) \) the set of bounded functions. We call a function \(u\in \ell(\Omega)\) $p$-subharmonic in \(\Omega\) if $\Delta_pu\ge 0$ for all $x\in \Omega$, and $p$-superharmonic if $\Delta_pu\le 0$. A function is $p$-harmonic in $\Omega$ if it is both $p$-subharmonic and $p$-superharmonic, that is, if $\Delta_pu=0$.

Since $m$ does not affect harmonicity, we mainly use the canonical vertex measure
$$\mu(x)=\sum_{y\sim x}\mu_{xy}.$$
With this choice of edge and vertex weights, we write the weighted graph simply as $(V,\mu)$.

For any two vertices $x$ and $y$, let $d(x,y)$
be the minimal number of edges among all possible paths connecting $x$ and $y$ on  graph $(V,\mu)$, then
$d(\cdot,\cdot)$ is a distance function on $V\times V$, and called the graph distance.
Fix some vertex $o\in V$, and for $r>0$, denote
$$B(o,r):=\{x\in V|\ d(o,x)\leq r\}.$$

Let $\Omega^c=V\setminus\Omega$. For a subset $\Omega \subset V$, we define the vertex boundary $\partial \Omega$ and the edge boundary $\partial_e \Omega$ by
$$
\partial \Omega = \{ x \in \Omega^c \mid \exists y \in \Omega \text{ such that } x \sim y \},
$$
and
$$
\partial_e \Omega = \{ xy \in E \mid x \in \Omega \text{ and } y \in \partial \Omega \},
$$
and we set $\overline{\Omega}:=\Omega\cup \partial \Omega$.

For an infinite subset $\Omega \subset V$ and a bounded boundary datum $f \in \mathcal{B}(\partial \Omega)$, we study the uniqueness of bounded solutions to the exterior Dirichlet problem
\begin{equation}\label{dp}
    \begin{cases}
\Delta_{p}u=0, & \text{in } \Omega,\\ 
u=f, & \text{on } \partial\Omega.
\end{cases}
\end{equation}

This problem is closely related to the notion of a massive set.
\begin{definition}
A subset \(\Omega \subset V\) is said to be \emph{$p$-massive} if there exists a function \(u \in \ell(V)\) such that
\begin{gather*}
    0 < u < 1 \quad \text{in } \Omega, \\
    \Delta_p u = 0 \quad \text{on } \Omega, \\
    u = 1 \quad \text{on }  \Omega^c.
\end{gather*}
If, in addition, \(u\) satisfies \(D_p(u) < \infty\), then \(\Omega\) is called \emph{$D_p$-massive}, where the $p$-Dirichlet energy of a function $f$ is defined by
\begin{equation}
    D_p(f) = \frac{1}{2} \sum_{x, y \in V} \mu_{xy} |f(x) - f(y)|^p.
\end{equation}
\end{definition}

For $p=2$, massiveness has a direct probabilistic interpretation: it describes whether a random walk starting in the set has a strictly positive probability of escaping to infinity without hitting the boundary. In the lattice case $\mathbb{Z}^d$, It\^o and McKean \cite{IM60} proved a Wiener criterion for this phenomenon; see also \cite{Lam63, McK61, BC15}. Classical Wiener-type criteria for the regularity of finite boundary points are well developed in the nonlinear setting; see, e.g., \cite{KM92, KM94, Bj09}. Our concern here is instead the point at infinity and its relation to massiveness.

To state the theorem, we introduce the $p$-capacity. For subsets $K \subset U$, it is defined by
$$
cap_p(K,U) := \inf \{ D_p(f) \mid f \in \ell_0(U), f=1 \text{ on } K \},
$$
where $D_p(f) = \frac{1}{2} \sum_{x,y} \mu_{xy} |f(x)-f(y)|^p$ denotes the $p$-Dirichlet energy; see Section \ref{pre} for details.

We also need several geometric assumptions.
\begin{definition}
We say that the weighted graph $(V,\mu)$ satisfies the \hyperref[p0]{$(p_{0})$} condition if there exists $p_0>0$ such that
\begin{equation}\label{p0}
	\frac{\mu_{xy}}{\mu(x)}\geq \frac{1}{p_0}, \quad\mbox{when $y\sim x$} \tag{$p_{0}$}.
\end{equation}
\end{definition}

\begin{definition}
We say that the weighted graph $(V,\mu)$ satisfies the volume doubling condition \hyperref[VD]{$\text{(VD)}$} if there exists a constant \(C_D>0\) such that
\begin{equation}\label{VD}
	\mu(B(x,2r))\le C_D \mu(B(x,r)), \tag{$\text{VD}$}
\end{equation}
for every ball \(B(x,r)\subset V\).
\end{definition}

\begin{definition}
We say that the weighted graph $(V,\mu)$ admits the weak $(1,p)$-Poincar\'e inequality \hyperref[PI]{$(P_p)$} if there exists a constant $C_p > 0$ such that for every ball $B=B(x,r)$ and all $f\in \ell(V)$,
\begin{equation}\label{PI}
    \frac{1}{\mu(B)}\sum_{x\in B} |f(x)-f_B|\mu(x) \le C_p r \left(\frac{1}{\mu(2B)}\sum_{x,y\in 2B}|f(y)-f(x)|^p\mu_{xy}\right)^{\frac{1}{p}}, \tag{$P_p$}
\end{equation}
where $2B=B(x,2r)$ and the average $f_B$ is defined as
\[ f_B = \frac{1}{\mu(B)}\sum_{y \in B} f(y)\mu(y). \]
\end{definition}

As a consequence of \hyperref[PI]{$(P_p)$}, for any function $u$ vanishing outside $B$, we have
\begin{equation*}
    \frac{1}{\mu(B)} \sum_{x \in B} |u(x)| \mu(x) \le C_p' \left( \frac{r^p}{\mu(2B)} \sum_{x,y \in 2B} |\nabla_{xy} u|^p \mu_{xy} \right)^{1/p}.
\end{equation*}

\begin{theorem}[Wiener criterion at infinity]\label{theo_weiner}
Let \((V,\mu)\) be an infinite, connected, and locally finite graph. Assume that $(V,\mu)$ satisfies the volume doubling condition \hyperref[VD]{$\text{(VD)}$} and the Poincar\'e inequality \hyperref[PI]{$(P_p)$}. Let $\Omega \subset V$ be an infinite connected set. If $\Omega$ is $p$-massive, then there exists $x_0\in \Omega$ such that
\begin{align}\label{intr_weiner}
    \sum_{n=1}^{\infty}\left(\frac{cap_{p}(A_{n},B_{n+1})}{cap_{p}(B_{n},B_{n+1})}\right)^{\frac{1}{p-1}}<\infty,
\end{align}
where $B_n = B(x_0, 2^n)$ and $A_n =\Omega^c \cap B_n$. If $(V,\mu)$ additionally satisfies the \hyperref[p0]{$(p_{0})$} condition, then the converse also holds.
\end{theorem}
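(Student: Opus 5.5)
Both directions will be proved by comparing the extremal function of $\Omega$ with capacitary potentials of the dyadic condensers $(A_n,B_{n+1})$. The comparison is measured in the relative capacity density
$$\delta_n:=\frac{cap_p(A_n,B_{n+1})}{cap_p(B_n,B_{n+1})}\in[0,1].$$
\hyperref[VD]{$\text{(VD)}$} and \hyperref[PI]{$(P_p)$} give the standard two-sided estimate $cap_p(B_n,B_{n+1})\asymp \mu(B_n)2^{-np}$. Together with the local machinery they provide a weak Harnack inequality, a De Giorgi/Moser oscillation lemma, and comparison for $p$-super/subharmonic functions on balls; I take these as available from the preliminaries. Throughout, I replace the function $u$ of the massiveness definition by $v=1-u$. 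Then $v=0$ on $\Omega^c$ and $0<v<1$ in $\Omega$. After normalizing so that $\sup_\Omega v=1$, one takes for $v$ the $p$-harmonic measure of infinity: the increasing limit of solutions on $\Omega\cap B_n$ with boundary value $1$ on $\partial B_n$ and $0$ on $\Omega^c$. Massiveness means exactly that this limit is nontrivial, by the comparison principle.

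\textbf{Necessity.} Set $M_n:=\sup_{B_n}v$. The key estimate is a capacitary decay lemma:
$$M_n\le M_{n+1}\bigl(1-c\,\delta_n^{1/(p-1)}\bigr).$$
To prove it, apply the weak Harnack inequality to the nonnegative $p$-superharmonic function $M_{n+1}-v$ on $B_{n+1}$, extended by $M_{n+1}$ on $A_{n+1}$ (it is $p$-superharmonic there because $v=0\le$ everything on $\Omega^c$). This gives $\inf_{B_n}(M_{n+1}-v)\ge c\,M_{n+1}\delta_n^{1/(p-1)}$. That is the discrete analogue of the Kilpeläinen--Malý lower bound for superharmonic functions by capacity, proved by testing the equation with $\eta^p\,(M_{n+1}-v)$-type test functions and using \hyperref[PI]{$(P_p)$}. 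Iterating gives
$$M_1\le \prod_{n=1}^{N}\bigl(1-c\,\delta_n^{1/(p-1)}\bigr)\,M_{N+1}\le \exp\Bigl(-c\sum_{n\le N}\delta_n^{1/(p-1)}\Bigr).$$
If the series \eqref{intr_weiner} diverged, then $M_1=0$, contradicting $v>0$ at some $x_0\in\Omega\cap B_1$. Here $x_0\in\Omega$ is chosen so that $B_1$ meets $\Omega$, and connectedness of $\Omega$ ensures that $v>0$ throughout $\Omega$ by the strong minimum principle.

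\textbf{Sufficiency under \hyperref[p0]{$(p_0)$}.} Assume the series converges. Let $v_N$ be the solution in $\Omega\cap B_N$ with $v_N=1$ on $\partial B_N$ and $v_N=0$ on $\Omega^c$. I will show that $v_N(x_0)$ stays bounded below, so that $v=\lim v_N$ is nontrivial. Writing $w_N=1-v_N$, the function $w_N$ is dominated by a sum of capacitary potentials of $A_n$ relative to $B_{n+1}$. Each such potential is bounded at $x_0$ by a Wolff-potential-type estimate, namely its value is at most $C\delta_n^{1/(p-1)}$ plus a tail that decays geometrically with the distance between annuli. This upper bound uses the reverse (Harnack and supremum) estimates.

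The \hyperref[p0]{$(p_0)$} condition enters to pass between vertex and edge quantities. Specifically, it makes a function near the boundary vertices of $A_n$ comparable to its values on neighbours, which yields the pointwise upper bound for potentials near the set, where the continuum theory uses quasicontinuity. Summing the potentials gives $w_N(x_0)\le C\sum_{n\ge n_0}\delta_n^{1/(p-1)}+(\text{small})$. Choosing $n_0$ large, equivalently centering far out or discarding the finite set $A_{n_0}$, which does not affect massiveness of an infinite connected set, makes this quantity less than $1$.

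The main obstacle is this superposition step. For $p\ne2$ potentials do not add, so I would follow the Kilpeläinen--Malý iteration instead. One builds $v$ annulus by annulus: on each step one solves the obstacle problem in $B_{k+1}\setminus A_{k+1}$ and bounds the loss of $\inf$ by $C\delta_k^{1/(p-1)}$ times the current level, using the test-function estimate above in reverse. This yields $\inf_{B_{n_0}}v\ge\prod_k(1-C\delta_k^{1/(p-1)})>0$.
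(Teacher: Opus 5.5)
\textbf{Convergence $\Rightarrow$ massive.} This is where your proposal has a genuine gap, and it is also the direction that carries the paper's main work.

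You concede that capacitary potentials do not superpose for $p\neq 2$. The replacement you offer (obstacle problems annulus by annulus, with the loss of the infimum bounded by $C\delta_k^{1/(p-1)}$ times the current level) is only asserted. Proving it would require bounding the new solution from below by a comparison function built from the old solution and the potential of the newly added piece; for $p=2$ one would subtract a multiple of that potential. For $p\neq 2$ such combinations are neither sub- nor supersolutions. Your capacitary estimate from the other direction is a \emph{lower} bound for supersolutions and cannot be ``used in reverse''.

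The missing idea is the Kilpel\"ainen--Mal\'y pointwise estimate for a \emph{single} supersolution, Lemma \ref{suph_upb}, built on the Caccioppoli--Sobolev estimate of Lemma \ref{lemc}. The paper proceeds as follows:
\begin{enumerate}
\item It sets $A'=\Omega^c\setminus B_{K-1}$, with $K$ so large that the tail of the series is small, and takes the potential $v_n$ of $(A'_n,B_{n+1})$.
\item It obtains $v_n(x_0)\le C\bigl(\mu(B_n)^{-1}\sum_{B_n}v_n^{\gamma}\mu\bigr)^{1/\gamma}+C\sum_i\bigl(r_i^p\sigma_n(B_i)/\mu(B_i)\bigr)^{1/(p-1)}$.
\item The Riesz masses $\sigma_n(B_i)$ are controlled by the relative capacities (Lemmas \ref{cap_pt} and \ref{cap_ballupb}), and the mean term tends to $0$ by $(P_p)$.
\item Hence the increasing limit $v$ equals $1$ on $A'$ with $v(x_0)\le 1/2$. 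So $A'$, and then $\Omega^c$ (which differs from $A'$ by a finite set), is not thick at $x_0$, and $\Omega$ is massive.
\end{enumerate}
Neither a Harnack inequality nor $(p_0)$ enters. Your account of how $(p_0)$ would be used (vertex--edge comparison in place of quasicontinuity) does not correspond to anything this argument needs.

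\textbf{Massive $\Rightarrow$ convergence.} Your argument here is, in outline, the paper's ``proof of sufficiency'' in Theorem \ref{wcm}. It compares with the potentials $u_n$ of $(A_n,B_{n+1})$, uses the lower bound \eqref{lb} for $\min_{\bar B_n}u_n$, and then runs a product iteration of the same kind as your decay lemma. The bound \eqref{lb} comes from Lemma \ref{cap_lvs} together with a Harnack chain on the sphere of radius $3\cdot 2^{n-1}$ via (BC).

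The Harnack step is exactly where $(p_0)$ is needed. The paper has $(H_p)$ only under (VD), $(P_p)$ \emph{and} $(p_0)$. A weak Harnack inequality for nonnegative $p$-supersolutions is likewise not available from (VD) and $(P_p)$ alone in this setting: discrete Moser-type proofs use comparability of a supersolution at neighbouring vertices, which is what $(p_0)$ provides. So your claim that these tools are available under (VD) and $(P_p)$ is unjustified.

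Consistently with this, Theorem \ref{wcm}, through which the paper proves the statement, attaches $(p_0)$ to divergence $\Rightarrow$ thickness, i.e.\ to massive $\Rightarrow$ convergence. It proves convergence $\Rightarrow$ massive under (VD) and $(P_p)$ alone; the formulation in the introduction attaches $(p_0)$ to the opposite direction. Your proposal therefore has the difficulty on the wrong side. The direction you claim without $(p_0)$ is the one that uses it, and the direction that needs only (VD) and $(P_p)$ is the one you leave unproved.
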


\begin{figure}[htbp]
    \centering
    \begin{tikzpicture}[scale=0.6]
        
        % \Omega^c path
        \def\omegacpath{ 
            (1.5, -4.8) to[out=110, in=-70] (0.5, -1.8) 
            to[out=110, in=180] (2.8, 1.8) 
            to[out=0, in=200] (4.8, 2.8) -- (4.8, -4.8) -- cycle 
        }

        % \Omega^c  field
        \fill[gray!15] \omegacpath;
        \draw[black, thin] \omegacpath; 

        % B_n path
        \draw[dashed, black] (0,0) circle (1.5);
        \draw[dashed, black] (0,0) circle (3.0);
        \draw[dashed, black, thick] (0,0) circle (4.5);

        % A_n field
        \begin{scope}
            \clip (0,0) circle (3.0);
            \fill[gray!50] \omegacpath;
            \draw[black, thick] \omegacpath;
        \end{scope}

        % x_0
        \filldraw[black] (0,0) circle (2pt);
        \node[below left, font=\normalsize] at (0,0) {$x_0$};

        % B_n 
        \node[black, font=\small] at (135:1.0) {$B_{n-1}$};
        \node[black, font=\small] at (135:2.3) {$B_n$};
        \node[black, font=\small] at (135:3.8) {$B_{n+1}$};

        % \Omega
        \node[font=\Large] at (-2.5, -2.5) {$\Omega$};
        
        % \Omega^c 
        \node[font=\Large, black] at (3.8, -3.2) {$\Omega^c$};

        % A_n 
        \node[font=\Large, black] at (1.8, -0.5) {$A_n$};

    \end{tikzpicture}
    \caption{Region $A_n$ and $B_n$.}
    \label{fig_weiner}
\end{figure}
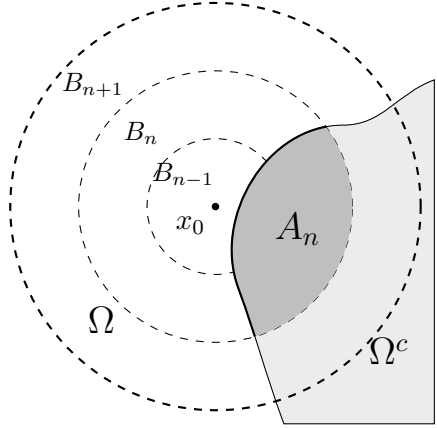

\begin{remark}\rm
Provided that the graph $(V,\mu)$ is not $p$-parabolic, the convergence condition \eqref{intr_weiner} in Theorem \ref{theo_weiner} can be equivalently replaced by
$$
    \sum_{n=1}^{\infty} \left( \frac{r_n^p \operatorname{cap}_{p}(A_n)}{\mu(B_{n})} \right)^{\frac{1}{p-1}} < \infty.
$$
This alternative formulation directly corresponds to the Wiener criterion obtained by It\^o and McKean \cite{IM60} on $\mathbb{Z}^d$ for $d\ge 3$ and $p=2$. Furthermore, when $\partial\Omega$ is finite, $\Omega$ is $p$-massive if and only if it is not $p$-parabolic; see Section \ref{pre}. The $p$-parabolic problem on graphs has been widely investigated; see \cite{HS97phar, HK01, AFS25}.
\end{remark}
The Wiener criterion is accompanied by two structural results that do not require the geometric assumptions \hyperref[VD]{$\text{(VD)}$} and \hyperref[PI]{$(P_p)$}. The first identifies $p$-massiveness with a strengthened form of nonuniqueness for the exterior Dirichlet problem \eqref{dp}.

\begin{theorem}\label{theo_uniq}
Let $(V,\mu)$ be an infinite, connected, and locally finite graph. Then the following statements are equivalent:
\begin{enumerate}
    \item $\Omega \subset V$ is a $p$-massive set.
    \item For some (equivalently, every) $f \in \mathcal{B}(\partial \Omega)$, the Dirichlet problem \eqref{dp} admits two bounded solutions $u$ and $v$ such that $\sup_{\Omega} u \neq \sup_{\Omega} v$.
\end{enumerate}
\end{theorem}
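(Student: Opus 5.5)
The plan is to prove Theorem \ref{theo_uniq} with the comparison principle for $p$-harmonic functions on finite sets and an exhaustion of $\Omega$ by finite sets. The key object is the harmonic measure of infinity. Let $\Omega_k=\Omega\cap B(o,k)$. For each $k$ let $w_k$ be the solution of $\Delta_p w_k=0$ in $\Omega_k$ with $w_k=1$ on $\Omega^c$ and $w_k=0$ on $\Omega\setminus\Omega_k$; it exists and is unique by minimizing the $p$-energy over functions with these boundary values. By comparison, $0\le w_k\le 1$ and $w_k$ is nonincreasing in $k$. Since the graph is locally finite, $\Delta_p$ is continuous under pointwise convergence, so the limit $w=\lim_k w_k$ is $p$-harmonic in $\Omega$ and equals $1$ on $\Omega^c$. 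I will first show that $\Omega$ is $p$-massive if and only if $w\not\equiv1$ on $\Omega$. If $u$ is a massiveness function, comparison on each $\Omega_k$ gives $w_k\le u$, hence $w\le u<1$. Conversely, if $w\not\equiv 1$, the strong maximum principle on each connected component of $\Omega$ gives $0<w<1$ there, except on components where $w\equiv1$; on those components I replace $w$ by the corresponding $w$ of a component where $w<1$. If needed, I reduce to connected components from the outset. The function $w$ is then the required massiveness function.

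For (1)$\Rightarrow$(2) with arbitrary $f\in\mathcal B(\partial\Omega)$, let $M=\sup|f|$. I construct the maximal solution $\overline u$ as the decreasing limit of solutions on $\Omega_k$ with boundary values $f$ on $\partial\Omega$ and $M+1$ on $\Omega\setminus\Omega_k$. The minimal solution $\underline u$ is built in the same way with value $-M-1$. The obstacle is that $\sup_\Omega \overline u\neq\sup_\Omega\underline u$ is not automatic. Instead I would use a one-parameter family: for $t\in[-M-1,M+1]$, let $u_t$ be the limit of solutions with value $t$ on $\Omega\setminus\Omega_k$. The nonlinearity prevents writing $u_t=f\,w+t(1-w)$. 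However, the barrier $u_t\ge t-(t-\min f)\,w$ shows that $\limsup u_t\ge t$ along a sequence where $w\to 0$. Note that $\inf_\Omega w=0$ if $\Omega$ is massive, since otherwise $(w-\inf w)/(1-\inf w)$ would give a contradiction with minimality; this follows from the fact that $p$-harmonicity is preserved under affine maps with positive slope. Combined with $u_t\le \max(t,\sup f)$, taking $t=M+1$ gives $\sup_\Omega u_t=M+1$, while taking $t=-M-1$ gives $\sup_\Omega u_t\le M$ by the maximum principle. This produces two bounded solutions with different suprema.

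For (2)$\Rightarrow$(1), suppose $\Omega$ is not massive, so $w\equiv1$. Given any bounded solution $u$, I must show $\sup_\Omega u=\max(\sup_{\partial\Omega}f,\ \text{something independent of } u)$. More precisely, I aim to show $\sup_\Omega u\le \sup_{\partial\Omega} f$ for every bounded solution; symmetrically $\inf_\Omega u\ge \inf f$. Since $u$ is squeezed between these, this forces $\sup_\Omega u$ to lie in a range independent of $u$. To conclude, I would prove the sharper statement that $u$ is unique. Let $s=\sup f$, $K=\sup u$, and suppose $K>s$. The function $v=(u-s)/(K-s)$ is $p$-subharmonic... rather, consider $1-v$, which is $p$-harmonic in $\Omega$, is $\ge 1$ on $\partial\Omega$, and is $\ge0$. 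Comparison on $\Omega_k$ gives $1-v\ge w_k$, hence $1-v\ge w\equiv 1$, so $u\le s$. This is a contradiction, and so $\sup_\Omega u\le\sup f$.

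The main obstacle is obtaining equality of suprema rather than just bounds. I would handle it by showing uniqueness directly. For two solutions $u,v$, I would run the same exhaustion argument on the set $\{u>v+\varepsilon\}$, using the comparison principle with $w$ as a barrier, which yields $u\le v+\varepsilon$. The "some versus every" equivalence then follows because (1) does not depend on $f$.
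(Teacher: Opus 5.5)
Your proof of (1)$\Rightarrow$(2) is correct, but it takes a different route from the paper's. The paper starts from an arbitrary bounded solution $v$. It uses $ch$ as a lower barrier on an exhaustion, where $h$ is an admissible $p$-harmonic function ($h=0$ on $\partial\Omega$, $0<h<1$ in $\Omega$) and $c$ is large, and obtains a solution $v'$ with $v'(x_0)>\sup_\Omega v$. You instead build the extremal solutions $u_{\pm(M+1)}$ and pin $\sup_\Omega u_{M+1}=M+1$ with the barrier $t-(t-\inf f)w$ and the fact $\inf_\Omega w=0$; your minimality argument for that fact is fine. This gives explicit witnesses that do not depend on a given solution. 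Two minor slips: $w_k$ is nondecreasing in $k$, not nonincreasing; and the sentence about transplanting $w$ between components should be replaced by simply using ``massive iff $w\not\equiv 1$''.

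The genuine gap is in (2)$\Rightarrow$(1), at the step you flag yourself. Your barrier argument shows only that, for non-massive $\Omega$, every bounded solution satisfies $\inf_{\partial\Omega}f\le u\le\sup_{\partial\Omega}f$. That does not force two solutions to have the same $\sup_\Omega$. Your proposed uniqueness proof, running the exhaustion on $\{u>v+\varepsilon\}$ with $w$ as a barrier, fails for $p\neq 2$. The comparison principle compares a $p$-subsolution with a $p$-supersolution. Since $\Delta_p$ is not additive, neither $u-v$ nor anything like $v+\varepsilon+C(1-w)$ has a sign for $\Delta_p$. Any barrier built from $w$ alone (or from the minimal function of a subset $D$) is a function of $w$, so it is constant on the boundary where $w\equiv 1$. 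On $D=\{u>v+\varepsilon\}$ such barriers give at best $\sup_D u\le\sup_{\partial D}u$, never the pointwise inequality $u\le v+\varepsilon$ against the nonconstant $v$. Uniqueness on non-massive sets is not what the theorem asserts, and the paper never uses it.

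The paper proves (2)$\Rightarrow$(1) directly, comparing $u$ only with $v$ on finite sets and with constants. Assume $\sup_\Omega u>\sup_\Omega v$. It takes $x_0$ with $u(x_0)>\sup_\Omega v$, and the component $A_0$ of $\{u>v\}$ through $x_0$, which is infinite by finite comparison. It picks a level $a\in(\sup_\Omega v,u(x_0))\setminus u(A_0)$ and the component $C$ of $\{x\in A_0:u(x)>a\}$ through $x_0$. Since $u<a$ on $\partial C$, $C$ is infinite. Solving on $C\cap B_k$ with data $a$ on $\partial C$ and $u$ on the rest of $\partial(C\cap B_k)$, then normalizing by $(\cdot-a)/(b-a)$ with $b=\sup_C u$, gives an admissible $p$-harmonic function on $C$. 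Hence $\Omega$ is massive by Remark~\ref{rem_mon_mass}. In your language, this is your sup bound applied on $C$ instead of on $\Omega$.

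Note, however, that $u<a$ at vertices of $\partial C\cap\partial\Omega$, where $u=v=f$, requires $f<a$ there. This is automatic when the suprema are taken over $\overline\Omega$, since then $a>\sup_{\overline\Omega}v\ge\sup_{\partial\Omega}f$. Under that reading your own bound closes (2)$\Rightarrow$(1) without any uniqueness. For non-massive $\Omega$ it gives $\sup_{\overline\Omega}u=\sup_{\partial\Omega}f$ for every bounded solution. Meanwhile your witnesses $u_{\pm(M+1)}$ have suprema $M+1$ and at most $M$ over $\overline\Omega$.
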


The next theorem characterizes the failure of the $p$-Liouville property through disjoint massive sets. For manifolds, this equivalence goes back to Grigor'yan \cite{gri88} when $p=2$ and to Holopainen \cite{hol94} for $p>1$. In the discrete setting, Holopainen and Soardi \cite{HS97phar} treated the $D_p$-Liouville case for unweighted graphs with bounded degree. Our result extends this correspondence to general weighted graphs.

\begin{theorem}\label{theo_liou}
Let $G = (V, E)$ be an infinite, connected, and locally finite graph. Then $G$ admits a nonconstant bounded $p$-harmonic function (resp., a nonconstant bounded $p$-harmonic function with finite $p$-energy) if and only if there exist two disjoint $p$-massive sets (resp., two disjoint $D_p$-massive sets) in $V$.
\end{theorem}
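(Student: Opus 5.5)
We prove Theorem \ref{theo_liou} in two directions, using the maximum/comparison principle for $p$-harmonic functions on finite sets and exhaustion arguments.

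\emph{Nonconstant bounded $p$-harmonic function $\Rightarrow$ two disjoint massive sets.} Let $h$ be bounded, nonconstant and $p$-harmonic on $V$. Set $a=\inf h<b=\sup h$, and choose $a<c_1<c_2<b$. Put $\Omega_1=\{h>c_2\}$ and $\Omega_2=\{h<c_1\}$; these are disjoint and nonempty. On $\Omega_1$ consider
$$u=\frac{b-h}{b-c_2}\wedge 1.$$
On $\Omega_1$ we have $0\le u<1$, and $u=1$ off $\Omega_1$. The constant $1$ is $p$-harmonic, so $u$ is $p$-superharmonic as a minimum of $p$-superharmonic functions; note that $b-h$ is $p$-harmonic because $\Delta_p(-h)=-\Delta_p h$. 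In fact, on $\Omega_1$ we have $u=(b-h)/(b-c_2)$. At a point $x\in\Omega_1$, the neighbours of $x$ lying outside $\Omega_1$ have $u=1\ge$ the formula value, so $\Delta_p u(x)\le 0$ rather than $=0$.

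Consequently $u$ is only a positive supersolution, and we must produce a genuine massive witness. We do this with the standard exhaustion. Take finite sets $U_k\uparrow\Omega_1$ and let $w_k$ solve $\Delta_p w_k=0$ on $U_k$, $w_k=1$ on $V\setminus U_k$. By comparison, $w_k\ge u$ and $w_k$ is decreasing in $k$. The limit $w$ is $p$-harmonic on $\Omega_1$, equals $1$ off $\Omega_1$, and satisfies $u\le w\le 1$. Since $\inf_{\Omega_1}u=0$, because $\sup h=b$ is approached inside $\Omega_1$, $w\not\equiv1$. By the strong maximum principle on each connected component, $w<1$ on some component $\Omega_1'$ of $\Omega_1$. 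Restricting to that component, where $w>0$ by the strong minimum principle or by $w\ge u>0$ after shrinking with a strict level, yields a $p$-massive set $\Omega_1'$. The same construction with $(h-a)/(c_1-a)$ gives a massive set inside $\Omega_2$.

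For the finite-energy version, the level truncations of $h$ have energy at most $D_p(h)$. The minimisers $w_k$ have energy bounded by that of the competitor $u$ (energy minimisation with the same boundary data), so by lower semicontinuity $D_p(w)<\infty$.

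\emph{Two disjoint massive sets $\Rightarrow$ nonconstant bounded $p$-harmonic function.} Let $\Omega_1,\Omega_2$ be disjoint and massive, with witnesses $u_1,u_2$. Exhaust $V$ by finite connected sets $W_k$. Solve $\Delta_p v_k=0$ in $W_k$ with boundary data $v_k=1-u_1$ on $\Omega_1\cap\partial W_k$, $v_k=u_2$ on $\Omega_2\cap \partial W_k$, and $v_k=1/2$ elsewhere. This data is chosen to bound $v_k$ between explicit barriers. By comparison on $W_k\cap\Omega_1$, using that $1-u_1$ is $p$-harmonic there and equals $0$ on $\partial\Omega_1$ while $v_k\ge 0$, we get $v_k\ge 1-u_1$ on $\Omega_1\cap W_k$. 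Similarly $v_k\le u_2$ on $\Omega_2\cap W_k$. The functions $v_k$ take values in $[0,1]$, so by local finiteness and a diagonal argument a subsequence converges pointwise to a $p$-harmonic $v$ with $0\le v\le1$. This $v$ satisfies $v\ge 1-u_1$ on $\Omega_1$ and $v\le u_2$ on $\Omega_2$.

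It remains to show $v$ is nonconstant. Since $\inf_{\Omega_1}u_1<1$ is not enough by itself, we use that a massive witness can be taken with $\inf u_1=0$. To arrange this, replace $u_1$ by $(u_1-\inf u_1)/(1-\inf u_1)$, which is still $p$-harmonic on $\Omega_1$ with boundary value $1$; we may need $\inf u_1$ attained only at infinity, which is fine. Then $\sup_{\Omega_1}v=1$ and $\inf_{\Omega_2}v=0$, so $v$ is nonconstant.

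\emph{Main obstacle.} The main obstacle is justifying the comparison on infinite sets: the comparisons are done on the finite sets $W_k\cap\Omega_i$, where the boundary includes $\partial W_k$ and the data dominate. In the energy case we also need $D_p(v)<\infty$, which follows by choosing $v_k$ as energy minimisers with data built from the finite-energy functions $u_1,u_2$ and using lower semicontinuity.
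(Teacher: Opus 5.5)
There is a genuine gap in your forward direction, at the exhaustion step. You prescribe $w_k=1$ on all of $V\setminus U_k$, but then the constant $1$ already solves the Dirichlet problem on the finite set $U_k$. By uniqueness (Lemma \ref{duni}), $w_k\equiv 1$, and so the limit is $w\equiv 1$.

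The inference ``$\inf_{\Omega_1}u=0$, hence $w\not\equiv 1$'' could not work in any case. The inequality $u\le w$ is a lower bound, and being bounded below by a function with infimum $0$ says nothing about $w$ staying below $1$. The comparison is also reversed. Since $u$ is a \emph{super}solution, comparing it with a $p$-harmonic $w_k$ gives $w_k\le u$ (when this holds on $\partial U_k$), not $w_k\ge u$.

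The missing idea is to use $u$ as an upper barrier, which requires different data on the artificial boundary. Take $U_k=B_k\cap\Omega_1$, set $w_k=1$ on $\partial U_k\cap\partial\Omega_1$, and set $w_k=u$ on $\partial U_k\cap\Omega_1$. Then:
\begin{itemize}
\item comparison gives $0\le w_k\le u$, and $w_k$ decreases in $k$;
\item the limit $w$ is $p$-harmonic on $\Omega_1$, equals $1$ off $\Omega_1$, and satisfies $w\le u<1$ on $\Omega_1$;
\item $w>0$ on $\Omega_1$ by the strong minimum principle: a zero of $w\ge 0$ propagates through its component of $\Omega_1$ to a vertex adjacent to $\partial\Omega_1$, where $w=1$.
\end{itemize}
So $\Omega_1$ itself is $p$-massive, and there is no need to pass to components.

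With these data $u$ is an admissible competitor, so $D_p(w_k)\le D_p(u)\le (b-c_2)^{-p}D_p(h)$, and Fatou's lemma gives the $D_p$ case. With your data $u$ is not a competitor, so the energy claim was unsupported as well. The repaired version is the paper's construction in the complementary normalization. The paper solves on $B_k\cap\Omega_1$ with value $a$ on $\partial\Omega_1$ and the given harmonic function on the artificial boundary, which traps the solutions between that function and its supremum. It then bounds the energy via Green's formula.

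In the converse, the barrier comparisons are correct. Your normalization $\inf_{\Omega_i}u_i=0$ is also legitimate, because the infimum of a witness cannot be attained inside $\Omega_i$. This makes explicit the nonconstancy step that the paper leaves implicit.

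The energy claim in the converse, however, does not follow from your data. Minimality gives a bound uniform in $k$ when the boundary data are the trace of one fixed function $g$ on $V$, namely $D_p(v_k)\le D_p(g)$. Your data are the trace of $g=1-u_1$ on $\Omega_1$, $g=u_2$ on $\Omega_2$, $g=1/2$ elsewhere. This $g$ has $D_p(g)=\infty$ whenever, for example, the edges from $\Omega_2$ to $V\setminus(\Omega_1\cup\Omega_2)$ have infinite total weight: finiteness of $D_p(u_2)$ forces $u_2$ to be close to $1$ along most of these edges, while $g$ jumps to $1/2$ across them.

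The fix is to use the data $F=1-u_1$ on $\Omega_1$ and $F=0$ elsewhere, which is the paper's choice. Comparison on $\Omega_2\cap W_k$ still gives $v_k\le u_2$, because $0\le u_2$. Minimality gives $D_p(v_k)\le D_p(F)=D_p(u_1)$ for every $k$, and Fatou's lemma then yields $D_p(v)<\infty$.
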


Finally, we give a discrete analogue of the manifold criterion of Grigor'yan \cite{gri88} and Holopainen \cite{hol94} for $D_p$-massive sets, using tools from nonlinear potential theory on networks.

\begin{theorem}\label{theo_dp}
Let \((V,\mu)\) be an infinite, connected, and locally finite graph. A subset \( \Omega \subset V \) is \( D_p \)-massive if and only if there exists a subset \( \Omega_1 \subset \Omega \) that is not $p$-parabolic and satisfies
\[
\operatorname{cap}_p(\Omega_1, \Omega) < \infty.
\]
\end{theorem}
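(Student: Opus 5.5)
We prove the two implications separately, following the manifold argument of Grigor'yan and Holopainen and using discrete nonlinear potential theory on networks.

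\emph{Sufficiency.} Suppose $\Omega_1\subset\Omega$ is not $p$-parabolic and $cap_p(\Omega_1,\Omega)<\infty$.
\begin{enumerate}
\item Fix an exhaustion of $V$ by finite connected sets $U_k\uparrow V$.
\item Let $u_k$ be the $p$-capacitary potential of the condenser $(\Omega_1\cap U_k,\ \Omega\cap U_k)$: $u_k=1$ on $\Omega_1\cap U_k$, $u_k=0$ off $\Omega\cap U_k$, and $\Delta_p u_k=0$ on $(\Omega\cap U_k)\setminus\Omega_1$. It is obtained by minimizing $D_p$, and by the comparison principle $0\le u_k\le 1$.
\item Set $w_k:=1-u_k$. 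Then $w_k=1$ on $\Omega^c$ and $D_p(w_k)=cap_p(\Omega_1\cap U_k,\Omega\cap U_k)\le cap_p(\Omega_1,\Omega)<\infty$.
\item The $u_k$ increase in $k$ by comparison. Pass to the limit $u=\lim u_k$, and use lower semicontinuity of $D_p$ under pointwise convergence to get $D_p(u)<\infty$.
\end{enumerate}
Two properties remain to be checked.
\begin{itemize}
\item $1-u$ is $p$-harmonic on $\Omega\setminus\Omega_1$ and equals $1$ on $\Omega^c$, but not on $\Omega_1$. To handle this, I would replace $u$ by the harmonic measure of $\Omega_1$ relative to $\Omega$, namely the solution on $\Omega$ with boundary value $0$ on $\partial\Omega$ and escape behaviour forced by $\Omega_1$. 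Concretely, take $h$ as the increasing limit of solutions $h_k$ with $\Delta_p h_k=0$ on $\Omega\cap U_k$, $h_k=0$ on $\partial\Omega$, and $h_k=u$ on $\partial U_k\cap\Omega$. Since $u\le 1$, Harnack/comparison gives $0\le h\le 1$.
\item Nontriviality of $h$. If $h\equiv0$, then the energy bound plus the Dirichlet principle would give $cap_p(\Omega_1\cap U_j)\to 0$ along an exhaustion, i.e.\ $\Omega_1$ would be $p$-parabolic, a contradiction. This is where non-parabolicity of $\Omega_1$ enters, via the characterization of $p$-parabolic sets from Section~\ref{pre}: capacity of $\Omega_1$ relative to infinity equals $0$.
\end{itemize}
The strong minimum principle on the connected components of $\Omega$ then gives $0<h<1$ on $\Omega$. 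If needed, restrict to the component containing $\Omega_1$; massiveness of a subset implies massiveness of $\Omega$ by comparison. The function $1-h$ (rescaled by $\sup h$) witnesses $D_p$-massiveness, with finite energy since $D_p(h)\le D_p(u)$ by the Dirichlet principle.

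\emph{Necessity.} Let $u$ be the $D_p$-massive witness and set $v=1-u$, so $0<v<1$ on $\Omega$, $v=0$ on $\Omega^c$, and $D_p(v)<\infty$. We may assume $\sup_\Omega v=1$ after rescaling: the maximal solution is normalized this way, as in the proof of Theorem~\ref{theo_uniq}. Take $\Omega_1=\{v>1/2\}\subset\Omega$. Then $2v\wedge 1$ (suitably truncated) is an admissible-type test function. Strictly, capacity requires finite support, but $D_p$-finite functions vanishing on $\Omega^c$ can be approximated in energy by cutoffs. This gives $cap_p(\Omega_1,\Omega)\le 2^pD_p(v)<\infty$.

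If $\Omega_1$ were $p$-parabolic, there would be finitely supported $\phi_j\to1$ pointwise with $D_p(\phi_j)\to0$. Comparing $v$ with $\tfrac12+\tfrac12(\ldots)$ on exhaustions via the comparison principle, one would force $v\le 1/2$ on $\Omega$ off a finite set, contradicting $\sup v=1$ together with the maximum principle.

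\emph{Main obstacle.} I expect the hardest step to be the approximation issue: capacity is defined with $\ell_0$ test functions while $v$ has only finite energy. This requires a discrete cutoff lemma showing that a function in the $D_p$-closure, vanishing on $\Omega^c$, lies in the closure of $\ell_0(\Omega)$ modulo constants, which can fail on non-parabolic graphs. My first approach would be to use instead the minimal $D_p$ solution obtained from exhaustions, which by construction lies in $\overline{\ell_0}$.
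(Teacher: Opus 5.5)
Your necessity half is essentially the paper's argument. The sufficiency half, however, has a genuine gap.

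First fix the reading of the capacity. For infinite $\Omega_1$ the paper reads $\operatorname{cap}_p(\Omega_1,\Omega)$ as the conductance $C_p(\Omega_1,\Omega^c)$, following its convention in Section 2. The test functions are finite-energy functions equal to $1$ on $\Omega_1$ and $0$ off $\Omega$, with no decay required at infinity. Under the literal $\ell_0$ reading, every infinite set has capacity $+\infty$; since non-parabolic sets are infinite, the criterion could then never hold.

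With the conductance reading, two of your steps fail.

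\emph{The energy bound.} The claim $D_p(u_k)=\operatorname{cap}_p(\Omega_1\cap U_k,\Omega\cap U_k)\le\operatorname{cap}_p(\Omega_1,\Omega)$ is false, because $u_k$ must jump from $1$ to $0$ across $\partial U_k$. Take $V=\mathbb{Z}^3$, $\Omega=\mathbb{Z}^3\setminus\{0\}$, $\Omega_1=\mathbb{Z}^3\setminus B(0,10)$ and $U_k=B(0,k)$. The function equal to $0$ at the origin and $1$ elsewhere shows $\operatorname{cap}_p(\Omega_1,\Omega)<\infty$. Yet $D_p(u_k)\gtrsim k^2$.

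\emph{Nontriviality of $h$.} This is the decisive failure. Your $h$ can vanish identically under the hypotheses of the theorem, so the assertion that $h\equiv0$ forces $\Omega_1$ to be $p$-parabolic is false; you also give no argument for it. Take $p=2$, $V=\mathbb{Z}^6$, $\Omega_1=\mathbb{Z}^3\times\{0\}^3$ and $\Omega=V\setminus\{z\}$ with $z\notin\Omega_1$.
\begin{enumerate}
\item $\Omega_1$ is a copy of $\mathbb{Z}^3$, hence non-parabolic, and $\operatorname{cap}_2(\Omega_1,\Omega)<\infty$.
\item $\Omega$ is $D_2$-massive: the hitting probability of $z$ is a witness.
\item Write $T_A$ for the hitting time of $A$ and $\tau_k$ for the exit time from $U_k$ of simple random walk. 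Then $u(x)=P_x(T_{\Omega_1}<T_z)$. Also $h_k(x)=E_x[u(X_{\tau_k});\tau_k<T_z]$ is at most the probability of visiting $\Omega_1$ after time $\tau_k$.
\item The projection onto the last three coordinates is a transient walk on $\mathbb{Z}^3$, so $\Omega_1$ is visited only finitely often. Hence this probability tends to $0$ and $h\equiv0$.
\end{enumerate}
One also checks that $D_2(u)=\infty$ here. The $h_k$ actually decrease in $k$, since $u$ is $p$-superharmonic on $\Omega$.

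The Dirichlet condition on $\partial U_k$ discards exactly the escape to infinity avoiding $\Omega^c$, which is what witnesses massiveness. Your proposed remedy of working in the $\ell_0$-closure makes this worse, not better.

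The missing idea is to leave the value at infinity free and to obtain nontriviality from a flux bound. The paper takes the tails $\Omega_n=\Omega_1\cap B_{n+1}^c$ and the minimizers $v_n$ of $C_p(\Omega_n,\Omega^c)$. These are limits of potentials on the induced subgraphs $B_k$, with no condition imposed on $\partial B_k$. Hence $D_p(v_n)=\operatorname{cap}_p(\Omega_n,\Omega)\le\operatorname{cap}_p(\Omega_1,\Omega)$ holds automatically. The $v_n$ are monotone in $n$ and converge to a $p$-harmonic $v$ on $\Omega$ with $v=0$ on $\Omega^c$ and finite energy. By Lemma \ref{lem_eng} with $t=0$,
\[
\sum_{x\in\Omega}\sum_{y\in\partial\Omega}v_n(x)^{p-1}\mu_{xy}=\operatorname{cap}_p(\Omega_n,\Omega)\ge C_p(B_{n_0}\cap\partial\Omega,\infty;B_{n_0}\cup\Omega_1)>0
\]
uniformly in $n$. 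The inequality comes from shrinking the zero set to $B_{n_0}\cap\partial\Omega$ and restricting test functions to the subgraph $B_{n_0}\cup\Omega_1$. The positivity is precisely the non-parabolicity of $\Omega_1$. Passing to the limit gives $v\not\equiv0$, and $1-v$ yields the witness.

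Two corrections to the necessity half. Drop the claimed cutoff approximation: no finitely supported function equals $1$ on an infinite $\Omega_1$, and under the conductance reading none is needed. Replace the vague comparison argument for non-parabolicity by the paper's observation. The function $1-v$ is positive, nonconstant and $p$-superharmonic on the induced subgraph $\Omega_1$, because deleted edges lead to vertices where $v\le\frac12<v(x)$. Proposition \ref{prop:parabolic_equiv} then gives non-parabolicity.
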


For linear random walks on graphs, boundary value problems at infinity were studied by Kaimanovich and Woess \cite{KW92}. For related boundary-theoretic perspectives on $p$-harmonic Dirichlet problems and $D_p$-massive sets on graphs, see Kurata \cite{Kur13} and Puls \cite{Puls14}.

\subsection*{Organization of the Paper}
The remainder of the paper is organized as follows. Section \ref{pre} collects preliminaries on $p$-Dirichlet energy and $p$-capacity. Section \ref{lems} proves the technical lemmas needed later. Section \ref{theos} establishes the uniqueness and Liouville results corresponding to Theorem \ref{theo_uniq} and Theorem \ref{theo_liou}. Section \ref{cr_pm} proves the main Wiener criterion at infinity, that is, Theorem \ref{wcm}. Section \ref{cr_dpm} proves the criterion for $D_p$-massive sets, namely Theorem \ref{dpmcr}. Section \ref{example} presents examples illustrating both criteria.

\section{Preliminaries}\label{pre}

In this section we collect the definitions and basic facts on $p$-Dirichlet
energy, $p$-capacity, and $p$-parabolicity that will be used throughout the
paper.

For $p \in (1, \infty)$ and $\Omega \subseteq V$, the $p$-Dirichlet energy of a function $f \in \ell(\Omega)$ is defined by
\begin{equation}
    D_p(f; \Omega) = \frac{1}{2} \sum_{x, y \in \Omega} \mu_{xy} |f(x) - f(y)|^p.
\end{equation}

Then, we introduce the set of admissible functions of \((B_1, B_2; \Omega)\), denoted by $\mathcal{A}(B_1, B_2; \Omega)$:
\begin{equation}
    \mathcal{A}(B_1, B_2; \Omega) := \{ f: \Omega \to \mathbb{R} \mid f|_{B_1} = 1, f|_{B_2} = 0, \text{ and } D_p(f; \Omega) < \infty \}.
\end{equation}

The $p$-effective conductance between $B_1$ and $B_2$ in $\Omega$ is then given by
\begin{equation}
    C_p(B_1, B_2; \Omega) = \inf_{f \in \mathcal{A}(B_1, B_2; \Omega)} D_p(f; \Omega).
\end{equation}
If $\mathcal{A}(B_1, B_2; \Omega) = \emptyset$, we set $C_p(B_1, B_2; \Omega) = \infty$. Clearly, \(C_p(B_1, B_2; \Omega)=C_p(B_2, B_1; \Omega)\). Furthermore, when $\Omega = V$, we simplify the notation to $C_p(B_1, B_2)$.

We further introduce a related quantity. For subsets $K \subset U$, the $p$-capacity of a finite set $K$ relative to $U$ in $\Omega$ is defined as
\begin{equation*}
    \operatorname{cap}_p(K, U;\Omega) = \inf\{D_p(f; \Omega) \mid f \in \ell_0(U), f = 1 \text{ on } K\}.
\end{equation*}

When $U$ is finite, it naturally follows that
\begin{equation}
    \operatorname{cap}_p(K, U;\Omega) = C_p(K, U^c;\Omega).
\end{equation}

If $U$ is infinite and $U \neq \Omega$, let $\{B_n\}_{n=1}^\infty$ be an exhaustion of $\Omega$. The following equalities hold:
\begin{equation*}
    \operatorname{cap}_p(K, U;\Omega) = \lim_{n \to \infty} C_p(K, (B_n \cap U)^c;\Omega) = C_p(K, U^c;\Omega).
\end{equation*}

Finally, we define
\begin{equation*}
    C_p(K, \infty;\Omega) := \lim_{n \to \infty} C_p(K, B_n^c;\Omega),
\end{equation*}
from which it follows that $C_p(K, \infty;\Omega) = \operatorname{cap}_p(K, \Omega;\Omega)$. For convenience, when $\Omega = V$, we simplify the notation to $\operatorname{cap}_p(K)$.

We say that a subset $\Omega \subset V$ is $p$-parabolic if $C_p(K, \infty;\Omega) = 0$ for some (and consequently, any) finite subset $K \subset \Omega$. 
\begin{proposition}\label{prop:parabolic_equiv}
 Let $(V,\mu)$ be an infinite, connected, and locally finite weighted graph. The following statements are equivalent:
\begin{enumerate}
    \item $(V,\mu)$ is $p$-parabolic.
    \item For any finite non-empty subset $A \subset V$, $\operatorname{cap}_p(A) = 0$.
    \item Every positive $p$-superharmonic function on $V$ is constant.
\end{enumerate}
\end{proposition}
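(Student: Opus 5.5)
We prove the three implications (1)$\Rightarrow$(2)$\Rightarrow$(3)$\Rightarrow$(1) of Proposition~\ref{prop:parabolic_equiv} in turn. The tools are monotonicity of capacity, a Harnack-type propagation along edges (which uses connectedness and local finiteness), and the variational characterization of equilibrium potentials.

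\emph{(1)$\Rightarrow$(2).} By definition, $p$-parabolicity gives $C_p(K,\infty)=0$ for some finite $K$. First we transfer this from $K$ to an arbitrary single vertex. Fix $x\in K$ and take $f_n\in\ell_0(V)$ with $f_n=1$ on $K$ and $D_p(f_n)\to0$. Then $f_n(x)=1$, so $\operatorname{cap}_p(\{x\})=0$. For any other vertex $z$, choose a path $x=x_0\sim x_1\sim\dots\sim x_k=z$. Each edge satisfies $\mu_{x_ix_{i+1}}|f_n(x_{i+1})-f_n(x_i)|^p\le 2D_p(f_n)\to0$, so $f_n(z)\to1$. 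The function $g_n=\min(f_n/f_n(z),1)$ is a normalized truncation: it lies in $\ell_0(V)$, equals $1$ at $z$, and $D_p(g_n)\le f_n(z)^{-p}D_p(f_n)\to0$. Hence every singleton has capacity zero.

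Next we pass from singletons to a finite set $A=\{a_1,\dots,a_k\}$. Take test functions $h_i$ for the singletons $\{a_i\}$ and set $h=\max_i h_i$. Then $h=1$ on $A$, $h\in\ell_0(V)$, and, since $|\max_i s_i-\max_i t_i|\le\max_i|s_i-t_i|$, we get $D_p(h)\le\sum_i D_p(h_i)$. This can be made arbitrarily small, so $\operatorname{cap}_p(A)=0$.

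\emph{(2)$\Rightarrow$(3).} Let $u>0$ be $p$-superharmonic on $V$ and suppose $u$ is not constant. Then there are adjacent vertices $a\sim b$ with $u(a)<u(b)$. Consider the truncation $w=\min(u,u(a))$. It is still $p$-superharmonic, since the minimum of a superharmonic function and a constant is superharmonic on graphs (checked pointwise via monotonicity of $t\mapsto|t|^{p-2}t$). It is positive and equals $u(a)$ at $a$. We expect this step to be the main obstacle: it uses the comparison principle on finite sets. Take an exhaustion $B_n$ and the equilibrium potential $e_n$ of $\{a\}$ relative to $B_n$: it is $p$-harmonic in $B_n\setminus\{a\}$, equals $1$ at $a$, vanishes off $B_n$, and has energy $C_p(\{a\},B_n^c)\to0$. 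The comparison principle on the finite set $B_n\setminus\{a\}$ gives $u(a)e_n\le w\le u$. As $n\to\infty$, $e_n$ increases to a limit $e$ with $D_p(e)=0$, so $e$ is constant, and because $e(a)=1$ we get $e\equiv1$.

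It remains to exploit the strict inequality $u(a)<u(b)$. The quantity $C_p(\{a\},B_n^c)$ equals the flux of $e_n$ out of $a$, namely $\sum_y\mu_{ay}|e_n(a)-e_n(y)|^{p-2}(e_n(a)-e_n(y))$. We aim to use superharmonicity of $u$ at $a$ together with $u(b)>u(a)$ to bound this flux below by a positive constant independent of $n$. A cleaner route is to apply comparison to the set $\{a\}$ versus $\{a,b\}$, which gives $\operatorname{cap}_p(\{a,b\})\ge c>0$, contradicting (2). The first thing to try is a weak-formulation argument: test the superharmonicity of $w$ against $\phi=\min(e_n,1)\cdot$ (a cutoff), then let $n\to\infty$.

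\emph{(3)$\Rightarrow$(1).} If $V$ is not $p$-parabolic, fix a vertex $a$ and form the limit $e=\lim_n e_n$ of the equilibrium potentials above. This limit is $p$-superharmonic on $V$ (harmonic off $a$, with nonnegative outflow at $a$) and satisfies $0<e\le1$. Moreover $D_p(e)\le\operatorname{cap}_p(\{a\})$. The claim is that $e$ is nonconstant. If instead $e\equiv1$, then $e_n\to1$ pointwise, and lower semicontinuity of the energy yields $D_p(1-e)=0$. Combining this with strict convexity of $D_p$, the uniqueness of minimizers, and the fact that $\operatorname{cap}_p(\{a\})>0$ gives a contradiction. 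To make this rigorous, note that the flux of $e_n$ at $a$ equals $\operatorname{cap}_p$ and is positive, and that this flux passes to the limit by local finiteness. Consequently $e$ is a nonconstant positive $p$-superharmonic function, contradicting (3).
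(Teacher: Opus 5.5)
Your implication (2)$\Rightarrow$(3) is not actually proved. After obtaining $u(a)e_n\le w\le u$ and $e\equiv 1$, you write that ``it remains to exploit'' $u(a)<u(b)$, and then only list possible strategies: a flux lower bound, a comparison of $\{a\}$ with $\{a,b\}$ giving $\operatorname{cap}_p(\{a,b\})\ge c>0$, and a weak-formulation test. None of these is carried out, and the $\{a,b\}$ claim in particular has no argument behind it. So the write-up stops just before the contradiction.

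The missing step is short, and you already have everything needed for it.
\begin{itemize}
\item Let $n\to\infty$ in $u(a)e_n\le u$. This holds on $B_n\setminus\{a\}$ by comparison, and trivially at $a$ and off $B_n$ because $u>0$. Since $e\equiv1$, you get $u\ge u(a)$ on all of $V$.
\item By (2), $\operatorname{cap}_p(\{a\})=0$ for \emph{every} vertex $a$, so the same holds with $a$ arbitrary. Hence $u(x)\ge u(a)$ and $u(a)\ge u(x)$ for all $x,a$, and $u$ is constant.
\item The adjacent pair $a\sim b$ and the truncation $w$ are therefore unnecessary.
\end{itemize}
If you prefer to keep $a\sim b$, there are two ways to finish.
\begin{itemize}
\item Since $u\ge u(a)$, every term of $\mu(a)\Delta_pu(a)=\sum_{y\sim a}\mu_{ay}|u(y)-u(a)|^{p-2}(u(y)-u(a))$ is nonnegative and the $b$-term is positive. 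This contradicts $\Delta_pu(a)\le0$.
\item Your flux idea also works. Superharmonicity at $a$ together with $u(b)>u(a)$ gives some $c\sim a$ with $u(c)<u(a)$. Then $e_n(c)\le u(c)/u(a)$, so for all $n$,
$C_p(\{a\},B_n^c)=\sum_{y\sim a}\mu_{ay}(1-e_n(y))^{p-1}\ge\mu_{ac}(1-u(c)/u(a))^{p-1}>0$.
This contradicts $\operatorname{cap}_p(\{a\})=0$.
\end{itemize}

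The other two implications are fine. In (3)$\Rightarrow$(1), delete the sentence about strict convexity and uniqueness of minimizers. The energy is only lower semicontinuous under pointwise limits, so $e\equiv1$ (which has $D_p(e)=0$) cannot be ruled out that way. The flux argument you give afterwards is the real proof:
\begin{itemize}
\item $\sum_{y\sim a}\mu_{ay}(1-e_n(y))^{p-1}=C_p(\{a\},B_n^c)\ge\operatorname{cap}_p(\{a\})>0$.
\item By local finiteness, the left side converges to $\sum_{y\sim a}\mu_{ay}(1-e(y))^{p-1}$.
\item So $e(y)<1=e(a)$ for some neighbor $y$, and $e$ is nonconstant.
\end{itemize}
You should also justify $e>0$. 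It follows from the minimum principle for the nonnegative superharmonic $e$: a zero of $e$ forces zeros at all its neighbors, and propagating along a path reaches $e(a)=1$.

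The paper gives no proof of this proposition; it only refers to Saloff-Coste and to Adriani--Fischer--Setti, which cover more general settings. Once the step above is inserted, your argument is a complete, elementary proof. It uses only the comparison principle, Fatou's lemma, and the flux identity at the pole.
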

\begin{proof}
The equivalence of these statements is a well-known result in nonlinear potential theory on graphs; see \cite{Sal97}, or \cite{AFS25} for a more general setting.
\end{proof}
\begin{remark}
    A subset $\Omega$ being $p$-parabolic is equivalent to the induced subgraph $(\Omega, \mu|_{\Omega \times \Omega})$ being a $p$-parabolic graph.
\end{remark}

While the $p$-effective conductance $C_p(K, U;\Omega)$ is more general than $\operatorname{cap}_p(K, U^c;\Omega)$, as it does not require $K$ to be finite, we will sometimes use these two notations interchangeably in the subsequent text, even when $K$ is infinite.

An immediate consequence of the definition is:
\begin{proposition}
Let $B_1, B_2$ be disjoint non-empty subsets of $\Omega$. The $p$-effective conductance satisfies the following properties:
\begin{enumerate}
    \item[(i)] If $B_1 \subseteq B_1'$, $B_2 \subseteq B_2'$, and $\Omega \subseteq \Omega'$, then $C_p(B_1, B_2; \Omega) \le C_p(B_1', B_2'; \Omega')$.
    \item[(ii)] $C_p(B_1, B_2; \Omega) = C_p(\partial B^o_1, \partial B^o_2; \Omega)$, where $U^o$ denotes the interior of a set $U$.
\end{enumerate}
\end{proposition}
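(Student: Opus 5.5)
Both parts follow by comparing admissible classes in the definition of $C_p$. For (i), let $f\in\mathcal{A}(B_1',B_2';\Omega')$; if this class is empty the right-hand side is $\infty$ and there is nothing to prove. The restriction $g=f|_{\Omega}$ satisfies $g=1$ on $B_1\subseteq B_1'$ and $g=0$ on $B_2\subseteq B_2'$. Moreover
$$D_p(g;\Omega)=\tfrac12\sum_{x,y\in\Omega}\mu_{xy}|f(x)-f(y)|^p\le \tfrac12\sum_{x,y\in\Omega'}\mu_{xy}|f(x)-f(y)|^p=D_p(f;\Omega'),$$
because we only discard nonnegative terms. Hence $g\in\mathcal{A}(B_1,B_2;\Omega)$ and $C_p(B_1,B_2;\Omega)\le D_p(f;\Omega')$. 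Taking the infimum over $f$ gives (i).

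For (ii), I first check the inclusion $\partial B_i^o\subseteq B_i$. Every $x\in B_i^o$ has all its neighbours in $B_i$, so any vertex outside $B_i^o$ adjacent to $B_i^o$ lies in $B_i\setminus B_i^o$. By (i), with $\Omega'=\Omega$, this gives $C_p(\partial B_1^o,\partial B_2^o;\Omega)\le C_p(B_1,B_2;\Omega)$.

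For the reverse inequality, take $f\in\mathcal{A}(\partial B_1^o,\partial B_2^o;\Omega)$ and define $\tilde f:=1$ on $B_1^o$, $\tilde f:=0$ on $B_2^o$, and $\tilde f:=f$ elsewhere. I claim $D_p(\tilde f;\Omega)\le D_p(f;\Omega)$.
\begin{itemize}
\item Edges with both endpoints outside $B_1^o\cup B_2^o$ keep the same contribution.
\item An edge with an endpoint $x\in B_1^o$ has its other endpoint $y$ in $B_1^o\cup\partial B_1^o$, where $\tilde f=1$. Hence $|\tilde f(x)-\tilde f(y)|=0$ on such edges.
\item The same argument applies to edges meeting $B_2^o$, with the value $0$.
\end{itemize}
So only nonnegative terms are removed. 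Since $\tilde f=1$ on $B_1^o\cup\partial B_1^o$ and $\tilde f=0$ on $B_2^o\cup\partial B_2^o$, the function $\tilde f$ is admissible for the pair $(B_1^o\cup\partial B_1^o,\;B_2^o\cup\partial B_2^o)$.

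The main obstacle is identifying this pair with $(B_1,B_2)$. The problem is vertices of $B_i$ that are neither in $B_i^o$ nor adjacent to it, for example isolated "thin" parts of $B_i$; there the constraint is genuinely lost. For the reverse inequality I would therefore add the hypothesis (or interpretation) that $B_i=B_i^o\cup\partial B_i^o$, i.e.\ each $B_i$ is the closure of its interior. This holds in the applications, where the $B_i$ are balls, complements of balls, or sets of the form $\overline{U}$. Under this hypothesis $\tilde f\in\mathcal{A}(B_1,B_2;\Omega)$, so $C_p(B_1,B_2;\Omega)\le D_p(f;\Omega)$, and taking the infimum over $f$ gives equality. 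I would state this hypothesis explicitly in the write-up rather than leave it implicit.
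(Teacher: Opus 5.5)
Your argument for (i), and for the inequality $C_p(\partial B_1^o,\partial B_2^o;\Omega)\le C_p(B_1,B_2;\Omega)$, is correct. It is exactly the comparison of admissible classes the paper has in mind; the paper gives no proof and calls the proposition an immediate consequence of the definition.

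Your suspicion about the reverse inequality is also correct. Take $U^o=\{x\in U:\ y\sim x\Rightarrow y\in U\}$ and let $\partial$ be the outer vertex boundary. Then (ii) is false as written. On $\mathbb{Z}$ with unit edge weights, let $B_1=\{0,1,2,10\}$ and $B_2=\{19,20,21\}$. Then $B_1^o=\{1\}$ and $B_2^o=\{20\}$, so $\partial B_1^o=\{0,2\}$ and $\partial B_2^o=\{19,21\}$. Comparing the paths $2\to19$ and $10\to19$ gives
\[
C_p(\partial B_1^o,\partial B_2^o;\mathbb{Z})=17^{1-p}<9^{1-p}=C_p(B_1,B_2;\mathbb{Z}).
\]
For a singleton $B_1$ one even gets $\partial B_1^o=\emptyset$. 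So (ii) cannot be proved as stated, and you were right not to claim it.

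I would push back on your repair. Rather than restricting to sets with $B_i=B_i^o\cup\partial B_i^o$, replace $\partial B_i^o$ by the inner boundary
\[
B_i\setminus B_i^o=\{x\in B_i:\ x\sim y \text{ for some } y\notin B_i\}.
\]
Your own extension argument ($\tilde f=1$ on $B_1^o$, $\tilde f=0$ on $B_2^o$, $\tilde f=f$ elsewhere) then proves
\[
C_p(B_1,B_2;\Omega)=C_p(B_1\setminus B_1^o,\,B_2\setminus B_2^o;\Omega)
\]
for all disjoint $B_1,B_2$, with no extra hypothesis. The reason is that every neighbour of a point of $B_1^o$ lies in $B_1^o\cup(B_1\setminus B_1^o)$, where $\tilde f=1$; likewise for $B_2$ with the value $0$.

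This inner-boundary form is also the one the paper actually uses. In the proof of Theorem~\ref{dpmcr} it writes $\operatorname{cap}_p(\Omega_n,\Omega)=C_p(\Omega_n,\Omega^c)=C_p(\Omega_n,\partial\Omega)$, and $\partial\Omega$ is precisely the inner boundary of $\Omega^c$.

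Your hypothesis does not cover that step. The identity $\Omega^c=(\Omega^c)^o\cup\partial(\Omega^c)^o$ fails as soon as $\Omega^c$ contains a vertex all of whose neighbours lie in $\Omega$: such a vertex is neither interior to $\Omega^c$ nor adjacent to its interior. For the same reason, your remark that the hypothesis holds for complements of balls is not true in general. A pendant vertex attached at distance $r$ from $x_0$ lies in $B(x_0,r)^c$ but is neither interior to it nor adjacent to its interior.
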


Next, we introduce some well known properties:
\begin{proposition}
Suppose that $C_p(B_1, B_2; \Omega) < \infty$. Then:
\begin{enumerate}
    \item[(i)] There exists a unique function $u \in \mathcal{A}(B_1, B_2; \Omega)$ such that $D_p(u; \Omega) = C_p(B_1, B_2; \Omega)$. We call this unique minimizer the $p$-potential of $(B_1, B_2; \Omega)$.
    \item[(ii)] The function $u$ is a solution to the Dirichlet problem:
    \begin{equation}
        \begin{cases}
            u(x) = 1, & \text{for } x \in B_1, \\
            u(x) = 0, & \text{for } x \in B_2, \\
            \Delta_{\Omega,p} u(x) = 0, & \text{for } x \in \Omega \setminus (B_1 \cup B_2).
        \end{cases}
    \end{equation}
\end{enumerate}
\end{proposition}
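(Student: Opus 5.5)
The statement to prove is the last proposition: existence and uniqueness of a minimizer $u\in\mathcal{A}(B_1,B_2;\Omega)$ with $D_p(u;\Omega)=C_p(B_1,B_2;\Omega)$, and the fact that it solves the stated Dirichlet problem. The plan is the direct method of the calculus of variations in the discrete setting, followed by a first-variation computation.

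\textbf{Existence.} Since $C_p(B_1,B_2;\Omega)<\infty$, the class $\mathcal{A}$ is nonempty, and we pick a minimizing sequence $f_k\in\mathcal{A}$ with $D_p(f_k;\Omega)\to C_p$. We first truncate, replacing $f_k$ by $\min(\max(f_k,0),1)$. This is a normal contraction: it does not increase any $|f(x)-f(y)|$, so it keeps the boundary values and does not increase the energy. Hence we may assume $0\le f_k\le 1$.

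By a diagonal argument over the countable set $\Omega$, we extract a subsequence converging pointwise to some $u$. Then $u=1$ on $B_1$ and $u=0$ on $B_2$. For the energy we apply Fatou's lemma to the sum over edges: since each edge term converges pointwise,
\[
D_p(u;\Omega)\le \liminf_k D_p(f_k;\Omega)=C_p .
\]
So $u\in\mathcal{A}$ and $u$ is a minimizer.

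\textbf{Uniqueness.} Suppose $u,v$ are both minimizers. Then $w=(u+v)/2\in\mathcal{A}$, because $\mathcal{A}$ is convex. The map $t\mapsto|t|^p$ is strictly convex for $p>1$, so
\[
D_p(w)\le \tfrac12\bigl(D_p(u)+D_p(v)\bigr)=C_p ,
\]
with strict inequality unless $\nabla_{xy}u=\nabla_{xy}v$ on every edge $xy$ inside $\Omega$. Minimality of $C_p$ forces equality on every edge. So $u-v$ is constant on each connected component of $\Omega$.

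The difficulty is fixing that constant. Wherever a component meets $B_1\cup B_2$, the constant is $0$. On a component disjoint from $B_1\cup B_2$ the minimizer is genuinely nonunique (any constant works). We therefore interpret the statement under the implicit assumption that every component of $\Omega$ meets $B_1\cup B_2$, for instance when $\Omega$ is connected. Alternatively, we read uniqueness up to such trivial components. We would state this assumption explicitly.

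\textbf{Euler--Lagrange equation.} Fix $x\in\Omega\setminus(B_1\cup B_2)$ and perturb by $u+t\delta_x$, which stays in $\mathcal{A}$ and has finite energy by local finiteness. The function
\[
g(t)=D_p(u+t\delta_x;\Omega)
\]
is differentiable, because $|s|^p$ is $C^1$ for $p>1$, and it has a minimum at $t=0$. Computing the derivative, $g'(0)=0$ gives
\[
\sum_{y\in\Omega,\,y\sim x}\mu_{xy}|\nabla_{xy}u|^{p-2}\nabla_{xy}u=0 .
\]
This is exactly $\Delta_{\Omega,p}u(x)=0$, up to the sign convention and the positive factor $1/m(x)$. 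Only finitely many terms depend on $t$, so the differentiation is a finite computation.

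The step I expect to need the most care is the passage to the limit: checking that Fatou applies to the infinite edge sum and that the pointwise limit keeps the boundary values. Both are routine once the truncation to $[0,1]$ is in place.
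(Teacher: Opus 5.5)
Your proof is correct, but it takes a different route from the paper. The paper gives no self-contained argument. It cites \cite[Theorem 2.17]{Bar17} for $p=2$ and says the proof carries over because functions on $\Omega$ with the norm $\|f\|_{o,p}=(D_p(f;\Omega)+|f(o)|^p)^{1/p}$ form a uniformly convex, hence reflexive, space. Existence then comes from weak compactness in a reflexive space, and uniqueness from strict convexity.

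You instead run the direct method by hand:
\begin{itemize}
\item truncating to $[0,1]$ (a normal contraction, so it does not increase $D_p$ and keeps the boundary values) gives uniform bounds;
\item a diagonal argument gives pointwise compactness;
\item Fatou's lemma gives lower semicontinuity;
\item strict convexity of $t\mapsto|t|^p$, applied edge by edge, gives uniqueness;
\item the first-variation computation gives (ii).
\end{itemize}

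Your route is more elementary and self-contained. You never have to check that $\|\cdot\|_{o,p}$ is a norm, that the space is complete and uniformly convex, or that $\mathcal{A}(B_1,B_2;\Omega)$ is closed in it. It is also the same diagonal-plus-Fatou mechanism the paper itself uses in Lemma~\ref{pot_app}. The Banach-space route is shorter to cite and reusable for other variational problems on the same space, but it hides its hypotheses.

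Your caveat on uniqueness is a real point about the statement, not a weakness of your method. If the induced graph on $\Omega$ has a component disjoint from $B_1\cup B_2$, adding a constant on that component changes neither the energy nor the constraints. In the paper's framework the same issue shows up as $\|\cdot\|_{o,p}$ failing to be a norm. So the proposition tacitly needs every component of $\Omega$ to meet $B_1\cup B_2$ (for example, $\Omega$ connected), and stating this explicitly, as you plan to, is the right fix.
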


\begin{proof}
See \cite[Theorem 2.17]{Bar17} for the case $p=2$. The proof extends directly to general $p$ by observing that while the space of real-valued functions on $\Omega$ equipped with the norm $\|f\|_{o,p} := (D_p(f; \Omega) + |f(o)|^p)^{1/p}$ is no longer a Hilbert space, it remains uniformly convex and therefore reflexive. For analogous arguments, we refer the reader to \cite{yam77, HS97liou, AFS25}.
\end{proof}
\begin{lemma}[Green's formula]\label{gf}
    Let $(V, \mu)$ be a locally finite weighted graph, and let $\Omega \subset V$ be a non-empty finite subset, Then, for any two functions $f, g$ on $V$, the following identity holds:
\begin{equation}
\begin{aligned}
\sum_{x \in \Omega} \Delta_p f(x) g(x) \mu(x) &= -\frac{1}{2} \sum_{x,y \in \Omega} |\nabla_{xy} f|^{p-2}(\nabla_{xy} f) (\nabla_{xy} g) \mu_{xy} \\
&\quad + \sum_{x \in \Omega} \sum_{y \in \partial \Omega} |\nabla_{xy} f|^{p-2}(\nabla_{xy} f) g(x) \mu_{xy}.
\end{aligned}
\end{equation}
\end{lemma}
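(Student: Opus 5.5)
This is the discrete Green formula. I will prove it by summation by parts: expand the left-hand side edge by edge and separate the edges that stay inside $\Omega$ from those that leave it. Since $\Omega$ is finite and the graph is locally finite, every sum is finite and can be rearranged freely. Write $\phi(t)=|t|^{p-2}t$. This function is odd, so $\phi(\nabla_{yx}f)=-\phi(\nabla_{xy}f)$. Multiplying the definition of $\Delta_p$ by $g(x)\mu(x)$ cancels the vertex weight, giving
\begin{equation*}
\sum_{x\in\Omega}\Delta_p f(x)g(x)\mu(x)=\sum_{x\in\Omega}\sum_{y\sim x}\mu_{xy}\,\phi(\nabla_{xy}f)\,g(x).
\end{equation*}
The identity therefore holds for any vertex weight $m$, not only the canonical one.

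Next I split the inner sum according to whether $y\in\Omega$ or $y\notin\Omega$. A neighbour of $x\in\Omega$ that lies outside $\Omega$ belongs to $\partial\Omega$ by definition. So the second part is exactly
\begin{equation*}
\sum_{x\in\Omega}\sum_{y\in\partial\Omega}\mu_{xy}\,\phi(\nabla_{xy}f)\,g(x),
\end{equation*}
which is the boundary term in the statement. Here $\mu_{xy}=0$ when $x\not\sim y$, so summing over all $y\in\partial\Omega$ does no harm.

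For the interior part $S=\sum_{x,y\in\Omega}\mu_{xy}\phi(\nabla_{xy}f)g(x)$, I symmetrize: swap the names $x$ and $y$, then use $\mu_{xy}=\mu_{yx}$ and the oddness of $\phi$. This gives $S=-\sum_{x,y\in\Omega}\mu_{xy}\phi(\nabla_{xy}f)g(y)$. Averaging the two expressions for $S$ yields
\begin{equation*}
S=-\tfrac12\sum_{x,y\in\Omega}\mu_{xy}\,\phi(\nabla_{xy}f)\,\bigl(g(y)-g(x)\bigr)=-\tfrac12\sum_{x,y\in\Omega}|\nabla_{xy}f|^{p-2}(\nabla_{xy}f)(\nabla_{xy}g)\mu_{xy}.
\end{equation*}
Adding the interior and boundary parts gives the claimed formula.

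There is no genuine obstacle. The only points needing care are the sign conventions, namely $\nabla_{xy}u=u(y)-u(x)$ together with the oddness of $\phi$, and the observation that the boundary term is not symmetrized because it carries only $g(x)$ with $x\in\Omega$. Finiteness of $\Omega$ and local finiteness justify every interchange of summation.
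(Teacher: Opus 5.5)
Your summation-by-parts argument is correct and is exactly the ``direct computation'' the paper cites without giving details: you split $\sum_{y\sim x}$ into neighbours in $\Omega$ and in $\partial\Omega$, then symmetrize the interior part using $\mu_{xy}=\mu_{yx}$ and the oddness of $t\mapsto |t|^{p-2}t$. One small slip is your side remark about arbitrary vertex weights: it holds only if the factor $\mu(x)$ on the left is replaced by the same weight $m(x)$ used in the definition of $\Delta_p$, since with $\mu(x)$ kept and $m\neq\mu$ the weights do not cancel.
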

\begin{proof}
    The proof follows from a direct computation.
\end{proof}

\begin{lemma}[Uniqueness of the Dirichlet problem]\label{duni}
    Let $(V, \mu)$ be a connected, locally finite weighted graph. Let $\Omega$ be a finite subset of $V$ such that $\Omega^c \neq \emptyset$. Then, for any $f \in \ell(\partial\Omega)$, the following Dirichlet problem admits a unique solution:
\begin{equation}
\begin{cases} 
\Delta_p u(x) = 0 & \text{for all } x \in \Omega, \\
u(x) = f(x) & \text{for all } x \in \partial \Omega.
\end{cases}
\end{equation}
\end{lemma}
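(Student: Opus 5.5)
We prove existence and uniqueness separately: existence by minimizing the $p$-Dirichlet energy over functions with the prescribed boundary values, uniqueness by a comparison principle derived from Green's formula (Lemma \ref{gf}) and the strict monotonicity of $t\mapsto |t|^{p-2}t$.

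\emph{Existence.} Consider the finite-dimensional affine space
\[
\mathcal{K}=\{u\in\ell(\overline{\Omega})\mid u=f \text{ on }\partial\Omega\}
\]
with the functional
\[
J(u)=\frac{1}{2}\sum_{x,y\in\overline{\Omega}}\mu_{xy}|\nabla_{xy}u|^p ,
\]
where edges between two vertices of $\partial\Omega$ may be dropped since they do not depend on the free variables. The set $\Omega$ is finite, so $\mathcal{K}$ is isomorphic to $\mathbb{R}^{|\Omega|}$ and $J$ is continuous and convex.

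For coercivity we use that $\Omega^c\neq\emptyset$ and that $G$ is connected. Every $x\in\Omega$ is then joined to $\partial\Omega$ by a path inside $\overline{\Omega}$ whose length is at most $|\Omega|$. Along such a path, $|u(x)|\le \max|f| + |\Omega|\,(J(u)/\mu_{\min})^{1/p}$, where $\mu_{\min}$ is the minimum edge weight over the finitely many edges involved. Hence the sublevel sets of $J$ are bounded, and a minimizer $u$ exists.

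Differentiating $J(u+t\varphi)$ at $t=0$ for $\varphi=\delta_x$, $x\in\Omega$, gives
\[
-\sum_{y\sim x}\mu_{xy}|\nabla_{xy}u|^{p-2}\nabla_{xy}u=0 ,
\]
that is, $\Delta_p u(x)=0$.

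\emph{Uniqueness.} Let $u,v$ be two solutions and set $w=u-v$. Then $w=0$ on $\partial\Omega$, so $w$ extended by $0$ has support in $\Omega$. Apply Lemma \ref{gf} to $f=u$ with $g=w$, and to $f=v$ with $g=w$, and subtract. The left sides vanish because $\Delta_p u=\Delta_p v=0$ on $\Omega$. For the boundary terms, write $w(x)=\nabla_{yx}w$ for $x\in\Omega$, $y\in\partial\Omega$, since $w(y)=0$. Each such term then equals $-(|a|^{p-2}a-|b|^{p-2}b)(a-b)\mu_{xy}$ with $a=\nabla_{xy}u$, $b=\nabla_{xy}v$, which is the same form as the interior terms. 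We obtain
\[
\sum_{xy}\mu_{xy}\bigl(|\nabla_{xy}u|^{p-2}\nabla_{xy}u-|\nabla_{xy}v|^{p-2}\nabla_{xy}v\bigr)\bigl(\nabla_{xy}u-\nabla_{xy}v\bigr)=0 ,
\]
summed over edges with at least one endpoint in $\Omega$.

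Since $t\mapsto|t|^{p-2}t$ is strictly increasing, each summand is nonnegative and vanishes only when $\nabla_{xy}u=\nabla_{xy}v$. Hence $\nabla_{xy}w=0$ on all edges touching $\Omega$. Every vertex of $\Omega$ connects through such edges to $\partial\Omega$, where $w=0$, so $w\equiv0$ on $\Omega$.

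The main obstacle is the bookkeeping in Green's formula: the boundary sum has to be rewritten so that it combines with the interior sum into a single sum over edges touching $\Omega$, with the correct sign and factor $\tfrac12$. As a fallback, uniqueness also follows from strict convexity. Any solution $u$ is a critical point of the convex functional $J$ on $\mathcal{K}$, hence a minimizer. Moreover, $J$ is strictly convex on $\mathcal{K}$, because $J(u)=J(v)$ together with equality in the convexity inequality forces $\nabla w=0$ on edges touching $\Omega$, and hence $w=0$ by the same connectivity argument. A strictly convex functional has at most one minimizer.
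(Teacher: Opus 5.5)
Your proof is correct. Existence comes from minimizing the $p$-energy over the finite-dimensional affine space $\mathcal{K}$, with coercivity supplied by paths inside $\overline{\Omega}$ to $\partial\Omega$. Uniqueness comes from Green's formula tested with $w=u-v$ together with strict monotonicity of $t\mapsto|t|^{p-2}t$, or alternatively strict convexity. The paper gives no argument of its own: it simply cites Theorems 3.11 and 3.14 of \cite{HS97phar} and says they extend to weighted graphs, and your write-up carries out that weighted extension along the standard variational route.
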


\begin{lemma}[Comparison principle]\label{cp}
    Let $\Omega \subset V$ be a finite set. If $u$ is $p$-superharmonic and $v$ is $p$-subharmonic in $\Omega$ such that $u \geq v$ on $\partial \Omega$, then $u \geq v$ in $\Omega$.
\end{lemma}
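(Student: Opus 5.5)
The statement to prove is the comparison principle, Lemma \ref{cp}. I will argue by contradiction with Green's formula (Lemma \ref{gf}) and the strict monotonicity of $t\mapsto |t|^{p-2}t$.

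Suppose the set $U:=\{x\in\Omega : v(x)>u(x)\}$ is nonempty, and put $w:=(v-u)^+$, extended by zero outside $\Omega$. Since $u\ge v$ on $\partial\Omega$, we have $w=0$ on $\partial\Omega$, and $w$ is supported in the finite set $U\subset\Omega$. Because $\Delta_p v\ge 0\ge \Delta_p u$ on $\Omega$ and $w\ge 0$,
$$\sum_{x\in\Omega}\bigl(\Delta_p v(x)-\Delta_p u(x)\bigr)w(x)\mu(x)\ge 0.$$
Apply Lemma \ref{gf} on $\Omega$ to the pair $(v,w)$ and to the pair $(u,w)$, and subtract. The boundary terms carry the factor $w(x)$ with $x\in\Omega$ and $y\in\partial\Omega$, so they do not cancel on their own. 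To handle them I will rewrite the left side directly as a sum over edges. Summation by parts over all edges with at least one endpoint in $\Omega$ gives
$$\sum_{x\in\Omega}\bigl(\Delta_pv-\Delta_pu\bigr)(x)\,w(x)\mu(x)=-\frac12\sum_{x,y\in V}\mu_{xy}\bigl(\phi(\nabla_{xy}v)-\phi(\nabla_{xy}u)\bigr)\nabla_{xy}w,$$
where $\phi(t)=|t|^{p-2}t$. This identity is valid because $w$ vanishes off $\Omega$, hence on $\partial\Omega$ and beyond, so every edge with $w\ne0$ at an endpoint is counted exactly once in each orientation. It is the same computation as Lemma \ref{gf}, applied with the test function $w\in\ell_0(\Omega)$.

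Next I show that each summand $T_{xy}:=\bigl(\phi(\nabla_{xy}v)-\phi(\nabla_{xy}u)\bigr)\nabla_{xy}w$ is nonnegative, by cases.
\begin{itemize}
\item If $x,y\in U$, then $\nabla_{xy}w=\nabla_{xy}v-\nabla_{xy}u$, and $T_{xy}\ge0$ by monotonicity of $\phi$.
\item If $x\in U$ and $y\notin U$, then $w(x)=v(x)-u(x)>0=w(y)$, so $\nabla_{xy}w<0$. Also $v(y)-u(y)\le 0<v(x)-u(x)$, which gives $\nabla_{xy}v<\nabla_{xy}u$, hence $\phi(\nabla_{xy}v)<\phi(\nabla_{xy}u)$ and $T_{xy}>0$.
\item The case $x\notin U$, $y\in U$ is symmetric.
\item If neither endpoint lies in $U$, then $T_{xy}=0$.
\end{itemize}
So the right side is $\le0$ while the left side is $\ge0$, forcing $T_{xy}=0$ for every edge. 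The case analysis then shows that no edge joins $U$ to $V\setminus U$, and that inside $U$ we have $\nabla_{xy}v=\nabla_{xy}u$, i.e.\ $v-u$ is constant along edges within $U$.

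The main obstacle is deriving the contradiction from this. If $U$ has no edge to its complement, then since $V$ is connected and $U\subset\Omega$ is finite while $\Omega^c\supset\partial\Omega$ is nonempty (assuming $\partial\Omega\neq\emptyset$, which holds because $V$ is infinite and connected and $\Omega$ is finite), some vertex of $U$ must be adjacent to a vertex outside $U$. That contradicts the previous step, so $U=\emptyset$, i.e.\ $u\ge v$ in $\Omega$.

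The step to check carefully is the edge-sum identity, specifically that the boundary terms in Lemma \ref{gf} are absorbed correctly. If that bookkeeping gets awkward, I would instead use a direct maximum-principle argument. Take $x_0\in\Omega$ maximizing $v-u$ with value $M>0$. For each neighbor $y$ we have $\nabla_{x_0y}v\le\nabla_{x_0y}u$, hence $\phi(\nabla_{x_0y}v)\le\phi(\nabla_{x_0y}u)$, so $\Delta_p v(x_0)\le\Delta_p u(x_0)$. Combined with $\Delta_pv\ge0\ge\Delta_pu$, equality must hold termwise, so every neighbor also attains the maximum $M$. Propagating along a path to $\partial\Omega$ reaches a boundary point where $v-u=M>0$, contradicting $u\ge v$ on $\partial\Omega$.
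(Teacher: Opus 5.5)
Your proof is correct, and in fact both of your arguments work. The paper gives no proof of Lemma~\ref{cp} of its own. It cites Theorem~3.14 of \cite{HS97phar}, which is stated for unweighted graphs, and asserts that the argument extends to the weighted setting.

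\textbf{First argument.} This is the discrete analogue of the standard weak comparison proof with test function $w=(v-u)^+$. The bookkeeping you flagged is sound. Since $w$ is finitely supported in $\Omega$, the sum $\sum_{x\in V}\sum_{y}\mu_{xy}|\nabla_{xy}f|^{p-2}\nabla_{xy}f\,w(x)$ can be symmetrized directly. The boundary terms in Lemma~\ref{gf} are exactly the contributions of edges from $U$ to $\partial\Omega$, and your second case shows these are strictly positive. The closing step can be said more simply: $U$ is a nonempty proper subset of the connected graph $V$ (proper because $U\subset\Omega$ is finite and $V$ is infinite). So some edge leaves $U$, and that edge has $T_{xy}>0$.

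\textbf{Fallback argument.} Your maximum-principle argument is also complete, and it is the more economical route: it is purely local and needs no summation by parts. Finiteness of $\Omega$ is used only to produce a maximizer and to guarantee that a path from it exits through $\partial\Omega$.

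\textbf{Comparison with the paper.} Your write-up makes explicit exactly what is used: symmetry and positivity of $\mu_{xy}$ on edges, strict monotonicity of $t\mapsto|t|^{p-2}t$, and connectedness of $V$. Connectedness is genuinely needed, since without it the statement fails when $\Omega$ contains a whole finite component. Your argument therefore actually verifies the paper's claim that the unweighted proof transfers to weights, with the vertex weight playing no role.
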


\begin{proof}
      Lemmas \ref{duni} and \ref{cp} correspond to Theorem 3.11 and Theorem 3.14 in \cite{HS97phar}. While the original results were stated for unweighted graphs, the proofs extend directly to the weighted setting.
\end{proof}

\section{Lemmas}\label{lems}

This section gathers the auxiliary estimates needed in the later proofs of the
main theorems. We begin with upper and lower bounds for the $p$-capacity of
balls.

\begin{lemma}\label{cap_ballupb}
  For any $0 < r < R$, define $B_1 = B(x_0, r)$ and $B_2 = B(x_0, R)$, then,
    \begin{equation}
         \operatorname{cap}_{p}(B_1, B_2) \le \frac{\mu(B_2)}{(R-r)^p}.
    \end{equation}
\end{lemma}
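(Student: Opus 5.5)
We plan to use a radial cutoff test function. Define
\[
f(x) = \min\Bigl\{1, \frac{(R - d(x_0,x))_+}{R-r}\Bigr\},
\]
so that $f = 1$ on $B_1$. Outside $B_2$ we have $d(x_0,x) > R$, hence $f = 0$ there. Thus $f$ is supported in $B_2$, and since the graph is locally finite, $B_2$ is finite, so $f \in \ell_0(B_2)$. (If one prefers the support to lie strictly inside $B_2$, as in the convention $\operatorname{cap}_p(K,U) = C_p(K,U^c)$, it is enough that $f$ vanishes on $B_2^c$, which holds.) Therefore $f$ is admissible for $\operatorname{cap}_p(B_1,B_2)$, and
\[
\operatorname{cap}_p(B_1,B_2) \le D_p(f).
\]

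Next we estimate the energy. The distance function is $1$-Lipschitz along edges: $|d(x_0,x) - d(x_0,y)| \le 1$ for $x \sim y$. The map $t \mapsto \min\{1, (R-t)_+/(R-r)\}$ is $\frac{1}{R-r}$-Lipschitz. Hence $|f(x) - f(y)| \le \frac{1}{R-r}$ for every edge $xy$.

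A nonzero difference $f(x) - f(y)$ can only occur on an edge with at least one endpoint $x$ satisfying $d(x_0,x) < R$. Otherwise both endpoints have $d \ge R$ and both values of $f$ vanish. So every contributing edge has an endpoint in $B_2$. Bounding the symmetric double sum by summing over that endpoint, we get
\[
D_p(f) = \tfrac12 \sum_{x,y} \mu_{xy} |f(x) - f(y)|^p \le \sum_{x \in B_2} \sum_{y \sim x} \mu_{xy} (R-r)^{-p} = \frac{\mu(B_2)}{(R-r)^p},
\]
using $\mu(x) = \sum_{y \sim x} \mu_{xy}$. The ordered pairs with both endpoints in $B_2$ are counted twice in the double sum, and the factor $\tfrac12$ compensates. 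Each ordered pair with exactly one endpoint in $B_2$ is counted once on the right for each of its two orientations; these two orientations together contribute at most $2\mu_{xy}(R-r)^{-p}$, which is what the $\tfrac12$ times the double sum produces.

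The only delicate point is this edge-counting bookkeeping, specifically the case where one endpoint lies outside $B_2$. We need to make sure the factor $\tfrac12$ absorbs it without producing a constant $2$. The cleanest way to handle it is to write
\[
D_p(f) = \sum_{\{x,y\} \in E} \mu_{xy} |\nabla_{xy} f|^p
\]
and assign each contributing edge to an endpoint in $B_2$ (say, the one closer to $x_0$). Then the sum over edges is bounded by $\sum_{x \in B_2} \mu(x) (R-r)^{-p}$, which gives the claim.
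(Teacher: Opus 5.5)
Your proof is correct and follows the paper's argument: the same linear cutoff in $d(x_0,\cdot)$, the edgewise bound $|\nabla_{xy} f|\le 1/(R-r)$ from the $1$-Lipschitz property of the graph distance, and the count $\sum_{x\in B_2}\mu(x)$. Your final bookkeeping (writing $D_p$ over unordered edges and charging each contributing edge to its endpoint closer to $x_0$, which lies in $B_2$) is more careful than the paper, which bounds the ordered-pair sum $\sum_{x,y\in V}$ directly without the factor $\tfrac12$; the sentence just before it, saying both orientations are counted on the right, is garbled and can simply be deleted.
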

\begin{proof}
     Define a cut-off function $\eta$ such that $0 \le \eta \le 1$ as follows:
    \[
    \eta(x) := \begin{cases} 
        1, & \text{if } d(x, x_0) \le r, \\
        0, & \text{if } d(x, x_0) > R, \\
        \frac{R - d(x, x_0)}{R - r}, & \text{otherwise.}
    \end{cases}
    \]
    By construction, $\eta \equiv 1$ on $B_1$, $\operatorname{supp}(\eta) \subset B_2$, and $|\nabla_{xy} \eta| \le 1/(R-r)$. From the definition of $p$-capacity, we have
    \begin{align*}
        \operatorname{cap}_{p}(B_1, B_2) \le \sum_{x,y \in V} |\nabla_{xy} \eta|^p \mu_{xy} \le \frac{\mu(B_2)}{(R-r)^p}.
    \end{align*}
\end{proof}

\begin{lemma}\label{cap_balllob}
    Assume condition \hyperref[VD]{$\text{(VD)}$} and \hyperref[PI]{$(P_p)$} is satisfied on $(V, \mu)$. For any $0 < r < R<2r $,  let $B_1 = B(x_0, r)$ and $B_2 = B(x_0, R)$, then there exists a constant $C_b > 0$ such that 
    \begin{equation}
        C_b \frac{\mu(B_2)}{R^p} \le \operatorname{cap}_{p}(B_1, B_2).
    \end{equation}
\end{lemma}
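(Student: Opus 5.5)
Take an arbitrary admissible function $f \in \ell_0(B_2)$ with $f = 1$ on $B_1$, and bound $D_p(f)$ from below by a multiple of $\mu(B_2)/R^p$; taking the infimum then gives the claim. Replacing $f$ by $\min(\max(f,0),1)$ does not increase the energy, so we may assume $0 \le f \le 1$. The idea is to apply the consequence of \hyperref[PI]{$(P_p)$} stated after the definition, the Sobolev-type estimate for functions vanishing outside a ball, to $f$ on a ball comparable to $B_2$. The estimate is then closed using that $f=1$ on $B_1$, which carries a fixed proportion of the volume of $B_2$ by \hyperref[VD]{$\text{(VD)}$}.

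Concretely, let $B = B(x_0, R)$. Then $f$ vanishes outside $B$, strictly outside $B_2 = B$ since $f \in \ell_0(B_2)$. The consequence of $(P_p)$ gives
\[
\frac{1}{\mu(B)}\sum_{x\in B}|f(x)|\mu(x) \le C_p'\left(\frac{R^p}{\mu(2B)}\sum_{x,y\in 2B}|\nabla_{xy}f|^p\mu_{xy}\right)^{1/p}.
\]
Since $f = 1$ on $B_1$, the left side is at least $\mu(B_1)/\mu(B_2)$. Because $R < 2r$, we have $B_2 \subset B(x_0, 2r)$, so \hyperref[VD]{$\text{(VD)}$} gives $\mu(B_2) \le C_D\mu(B_1)$ and the left side is at least $1/C_D$. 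The sum on the right is at most $2D_p(f)$. Raising to the $p$-th power yields $D_p(f) \ge c\,\mu(2B)/R^p \ge c\,\mu(B_2)/R^p$ with $c = (C_D C_p')^{-p}/2$.

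The main obstacle is justifying the quoted Sobolev-type consequence of $(P_p)$, since the naive route fails. The paper states it without proof, and I would take it as given. If it must be derived, one could try applying $(P_p)$ on a larger ball $B' = B(x_0, 2R)$ and using that $f$ vanishes on a substantial part of $B'$. This needs the annulus $B'\setminus B_2$ to have volume comparable to $\mu(B')$, a reverse doubling property that follows from (VD) together with connectedness and infiniteness of the graph. Then $|f_{B'}| \le \mu(B_2)/\mu(B') \le 1-\delta$ on that part, so the average of $|f - f_{B'}|$ over $B'$ is bounded below by a constant. Applying $(P_p)$ on $B'$ with doubling to compare $\mu(B')$, $\mu(4B)$ and $\mu(B_2)$ again gives $D_p(f) \gtrsim \mu(B_2)/R^p$, possibly with a differently named constant.

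A subtlety in this derivation is small radii, where balls may coincide because graph distances are integers. I would handle this by noting that, since $V$ is infinite and connected, $B(x_0, 2R) \setminus B(x_0,R)$ is nonempty, and using (VD) to bound the local volume ratios.
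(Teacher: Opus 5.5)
Your proposal is correct and follows essentially the same route as the paper: both apply the Sobolev-type consequence of $(P_p)$ on $B=B_2$ to a function that equals $1$ on $B_1$ and is supported in $B_2$, and then use (VD) with $R<2r$ to get $\mu(B_1)/\mu(B_2)\ge 1/C_D$. The differences are cosmetic. The paper works directly with the $p$-potential of $(B_1,B_2)$ instead of an arbitrary admissible $f$ followed by an infimum. It also drops the factor $2$ between $\sum_{x,y}|\nabla_{xy}f|^p\mu_{xy}$ and $D_p(f)$, which you track correctly.
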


\begin{proof}
    Let $u$ be the $p$-potential for the capacitor $(B_1, B_2)$. Since $u \equiv 1$ on $B_1$ and $\operatorname{supp}(u) \subset B_2$, we apply \hyperref[VD]{$\text{(VD)}$} and \hyperref[PI]{$(P_p)$}, then
    \begin{align*}
        1 \le \frac{C_D}{\mu(B_2)} \sum_{x \in B_2} u(x) \mu(x) \le C_D C_p' \left( \frac{R^p}{\mu(2B_2)} \sum_{x,y \in 2B_2} |\nabla_{xy} u|^p \mu_{xy} \right)^{1/p}.
    \end{align*}
    Raising both sides to the power $p$, we obtain 
    \begin{align*}
        C_b \frac{\mu(B_2)}{R^p} \le \sum_{x,y \in V} |\nabla_{xy} u|^p \mu_{xy} = \operatorname{cap}_{p}(B_1, B_2),
    \end{align*} 
    where the constant $C_b = (C_D C_p' )^{-p}$.
\end{proof}

\begin{lemma}\label{pot_app}
Let \(K\) and \(\Omega\) be disjoint non-empty subsets of \(V\) with \(C_p(K,\Omega)<\infty\). Let \(\{B_n\}_{n=1}^\infty\) be an exhaustion of \(V\), and for each \(n\), let \(u_n\) be the \(p\)-potential associated with \(C_p(K,\Omega;B_n)\). Then \(u_n\) converges pointwise on \(V\) to the \(p\)-potential \(u\) of \((K,\Omega)\). Moreover,
\[
    \lim_{n\to \infty} C_p(K,\Omega;B_n)= C_p(K,\Omega).
\]
\end{lemma}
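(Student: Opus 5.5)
The plan is to use the direct method together with monotonicity in $n$, exploiting that each $u_n$ lives on the finite set $B_n$.

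\emph{Step 1: admissibility and monotonicity.} I will assume, as the notation suggests, that the potential $u_n$ of $C_p(K,\Omega;B_n)$ is extended to all of $V$ by $0$ outside $B_n$. Then $u_n$ is admissible for $C_p(K,\Omega;B_{n+1})$, and indeed for $C_p(K,\Omega)$, since energy is only picked up on edges inside $B_n$ or crossing $\partial B_n$. This should show that $C_p(K,\Omega;B_n)$ is nonincreasing in $n$ and bounded below by $C_p(K,\Omega)$. With this convention the exhausted problems are really the finite-volume problems $C_p(K\cap B_n,\Omega\cup B_n^c)$. Whichever convention the paper uses, the task is to show that these finite-volume quantities converge to the global one.

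\emph{Step 2: convergence of the energies.} Let $u$ be the $p$-potential of $(K,\Omega)$, which exists because $C_p(K,\Omega)<\infty$. By truncation, $0\le u\le 1$ may be assumed. I first check that finitely supported admissible functions are dense in energy. If $K$ is finite, I would take $\min\{u,1\}$ multiplied by cutoffs, or more directly approximate $u$ by $(u-\varepsilon)_+/(1-\varepsilon)$. Since $D_p(u)<\infty$ and the potential tends to $0$ at infinity in the capacity sense, this truncation should have finite support once $\Omega$ together with ``infinity'' is handled. Given any finitely supported admissible $g$ with $D_p(g)\le C_p(K,\Omega)+\varepsilon$, the support of $g$ lies in some $B_N$, so $C_p(K,\Omega;B_n)\le D_p(g)$ for all $n\ge N$. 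Combined with Step 1, this yields $\lim_n C_p(K,\Omega;B_n)=C_p(K,\Omega)$. I expect this density step to be the main obstacle, because $K$ or $\Omega$ may be infinite and the definition of $C_p$ allows functions with infinite support. If density fails in general, I would instead interpret $C_p(K,\Omega)$ as the decreasing limit, which is consistent with the capacity identities stated in Section~\ref{pre}, and then argue only the pointwise convergence.

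\emph{Step 3: pointwise convergence.} The sequence $(u_n)$ has bounded energy, and its values lie in $[0,1]$ by the comparison principle, Lemma~\ref{cp}. By a diagonal argument there is a subsequence converging pointwise to some $w$. Fatou's lemma gives $D_p(w)\le\liminf D_p(u_n)=C_p(K,\Omega)$. Moreover $w=1$ on $K$ and $w=0$ on $\Omega$, so $w$ is admissible, and therefore $w=u$ by the uniqueness of the minimizer. Since every subsequence has a further subsequence converging to $u$, the whole sequence converges pointwise to $u$.
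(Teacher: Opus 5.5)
\textbf{There is a genuine gap.} Your Step 3 is exactly the paper's argument: diagonal subsequence, Fatou, admissibility of the limit, uniqueness of the $p$-potential, and the subsequence principle. But the bound it relies on, $\liminf_n C_p(K,\Omega;B_n)\le C_p(K,\Omega)$, is supposed to come from Steps 1--2, and those rest on a misreading of $C_p(K,\Omega;B_n)$.

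In Section~\ref{pre}, $C_p(B_1,B_2;\Omega)$ is an infimum over functions defined only on $\Omega$, with energy $D_p(f;\Omega)$ counting only edges with both endpoints in $\Omega$. So $C_p(K,\Omega;B_n)$ is the problem on the induced subgraph $B_n$: $f=1$ on $K\cap B_n$, $f=0$ on $\Omega\cap B_n$, nothing is imposed outside $B_n$, and edges leaving $B_n$ are not counted. With this reading the needed bound is immediate. For $f\in\mathcal{A}(K,\Omega)$, the restriction $f|_{B_n}$ lies in $\mathcal{A}(K,\Omega;B_n)$ and $D_p(f|_{B_n};B_n)\le D_p(f)$, so $C_p(K,\Omega;B_n)\le C_p(K,\Omega)$. (The sequence is nondecreasing, not nonincreasing as in your Step 1.) Since each edge eventually lies inside $B_{n_j}$, Fatou then gives
\[
C_p(K,\Omega)\le D_p(w)\le \liminf_{j\to\infty} C_p(K,\Omega;B_{n_j})\le C_p(K,\Omega),
\]
which is the whole proof. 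No density of finitely supported functions is needed.

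Under your reading (grounding outside $B_n$), Steps 1--2 do not merely lack detail; they fail, and the lemma becomes false.
\begin{itemize}
\item If $K$ is infinite, $u_n$ vanishes on $K\setminus B_n$. So it is admissible neither for the $(n+1)$-st problem nor for $C_p(K,\Omega)$, and no finitely supported function is admissible for $C_p(K,\Omega)$ at all.
\item Concretely, on $\mathbb{Z}^d$ with constant edge weights let $\Omega=\{o\}$ and $K=V\setminus\{o\}$. The only admissible function is $\mathbf{1}_{V\setminus\{o\}}$, so $C_p(K,\Omega)=\mu(o)$. Your $n$-th problem admits only $\mathbf{1}_{B_n\setminus\{o\}}$, whose energy is $\mu(o)$ plus the total weight of the edges leaving $B_n$; this stays bounded away from $\mu(o)$.
\item Even for finite $K$, the claim that $(u-\varepsilon)_+$ has finite support is unfounded. $C_p(K,\Omega)$ is an infimum over all finite-energy functions with no decay condition at infinity. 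On a $p$-hyperbolic graph the constant $1$ is not in the energy closure of $\ell_0(V)$, so finitely supported admissible functions need not be energy-dense (e.g.\ $K$ and $\Omega$ two neighbouring vertices of $\mathbb{Z}^3$, $p=2$).
\end{itemize}
Your fallback of redefining $C_p(K,\Omega)$ as the limit changes the statement instead of proving it.
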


\begin{proof}
Since \(0\le u_n\le 1\) on \(V\) for every \(n\), a diagonal argument yields a subsequence \(\{u_{n_j}\}\) and a function \(v\) such that \(u_{n_j}(x)\to v(x)\) for every \(x\in V\).

Clearly, \(v=1\) on \(K\) and \(v=0\) on \(\Omega\). By Fatou's lemma,
\[
    D_p(v)\le \liminf_{j\to\infty} D_p(u_{n_j})
    = \liminf_{j\to\infty} C_p(K,\Omega;B_{n_j})
    \le C_p(K,\Omega),
\]
where we used the fact \(D_p(f;B_n)\le(D_p(f) \), \(\mathcal{A}(K, \Omega)\subset \mathcal{A}(K, \Omega; B_n) \) and
\[
   C_p(K, \Omega;B_n ) = \inf_{f \in \mathcal{A}(K, \Omega; B_n)} D_p(f; \Omega).
\]
Thus \(v\) is admissible for \(C_p(K,\Omega)\), the reverse inequality \(C_p(K,\Omega)\le D_p(v)\) also holds. Therefore \(D_p(v)=C_p(K,\Omega)\), and by uniqueness of the \(p\)-potential, \(v=u\).

Since every pointwise convergent subsequence of \(\{u_n\}\) has the same limit \(u\), the whole sequence \(\{u_n\}\) converges pointwise to \(u\).

\end{proof}

\begin{lemma}\label{lem_eng}
For two disjoint non-empty subsets \(K \) and \( \Omega\),  assume that \( C_p(K, \Omega)<\infty\) and \(u\) is the $p$-potential for the capacitor \((K, \Omega)\). Then, for almost every \(t\in(0,1)\), and also for \(t=0\) and \(t=1\), we have  
\begin{align}\label{cap_eng}
    C_p(K, \Omega)= \sum_{xy \in \partial_e\Gamma_t} |\nabla_{xy} u|^{p-1} \mu_{xy},
\end{align}
where \(\Gamma_t=\{x\in V: u(x)> t\}\) for any \(0 \le t < 1\), and \(\Gamma_1=K\).
\end{lemma}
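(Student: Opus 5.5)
The plan is to prove the coarea-type identity \eqref{cap_eng} in two steps: first for level sets that do not pass through any value of $u$ (via Green's formula, Lemma \ref{gf}, on finite sets), then pass to the infinite graph using the exhaustion of Lemma \ref{pot_app}.

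\emph{Finite case.} Fix an exhaustion $\{B_n\}$ and let $u_n$ be the $p$-potential of $C_p(K,\Omega;B_n)$. Since $B_n$ is finite, the set
\[
T_n=\{u_n(x): x\in B_n\}
\]
is finite. Fix $t\in(0,1)\setminus T_n$ and put $\Gamma_t^n=\{x\in B_n: u_n(x)>t\}$. The key observation is that $\Gamma_t^n\setminus K$ lies in the set where $u_n$ is $p$-harmonic in $B_n$, because on $\Omega$ we have $u_n=0<t$. Apply Lemma \ref{gf} on $\Gamma_t^n\setminus K$ to $f=u_n$ with test function $g=1$. The interior gradient term vanishes, and harmonicity kills the left side. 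This shows that the net flux $\sum|\nabla_{xy}u_n|^{p-2}\nabla_{xy}u_n\,\mu_{xy}$ across $\partial_e\Gamma_t^n$ equals the flux across $\partial_e K$, where $K=\Gamma_1$.

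Because $t\notin T_n$, every boundary edge $xy$ of $\Gamma_t^n$ has $u_n(x)>t>u_n(y)$. Hence each term in the flux across $\partial_e\Gamma^n_t$ has a definite sign, and that flux equals $\sum_{\partial_e\Gamma^n_t}|\nabla u_n|^{p-1}\mu_{xy}$.

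It remains to identify the flux across $\partial_e K$ with $D_p(u_n;B_n)=C_p(K,\Omega;B_n)$. Apply Green's formula once more on $B_n\setminus(K\cup\Omega)$ with $g=u_n$. The left side vanishes by harmonicity, and the boundary terms contribute only on $K$, where $u_n=1$, and on $\Omega$, where $u_n=0$. This gives the energy as the $K$-flux. The same computation also covers $t=0$ and $t=1$, since the edges leaving $\{u_n>0\}$ end where $u_n=0$ and those leaving $K$ have $u_n\le1$. The needed sign fact, $0\le u_n\le1$, follows from Lemma \ref{cp}.

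\emph{Passage to the limit.} By Lemma \ref{pot_app}, $u_n\to u$ pointwise and $C_p(K,\Omega;B_n)\to C_p(K,\Omega)$. Let $T=\bigcup_n T_n\cup u(V)$; this is countable, hence of measure zero. For $t\notin T$, each vertex $x$ has $u(x)\ne t$, so for $n$ large the indicator $1_{\{u_n(x)>t\}}$ agrees with $1_{\{u(x)>t\}}$ locally. Thus every fixed edge eventually lies in $\partial_e\Gamma^n_t$ exactly when it lies in $\partial_e\Gamma_t$.

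\emph{Main obstacle.} The difficulty is that $\partial_e\Gamma_t$ may be infinite, so exchanging the limit with the sum is not automatic. Fatou's lemma gives only
\[
\sum_{\partial_e\Gamma_t}|\nabla u|^{p-1}\mu_{xy}\le C_p(K,\Omega).
\]
For the reverse inequality, I would first integrate in $t$. The coarea identity
\[
\int_0^1\sum_{\partial_e\Gamma_t}|\nabla u|^{p-1}\mu_{xy}\,dt=\sum_{xy}|\nabla u|^{p}\mu_{xy}=C_p(K,\Omega)
\]
holds, and since the integrand is bounded above by $C_p(K,\Omega)$ and integrates to $C_p(K,\Omega)$ over an interval of length one, the inequality must be an equality for almost every $t$.

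I would then try to upgrade ``almost every'' to ``every $t\notin T$'' directly in the limit. The approach is to note that the flux through $\partial_e\Gamma_t$ is independent of $t$ for $u$ itself, by applying Green's formula on finite truncations of $\{s<u\le t\}$ and using finite energy to control the tails. This step is the one I expect to be delicate. If it fails, the almost-every statement, together with the direct computation at $t=0,1$ (handled in the same way), is exactly what the lemma claims.
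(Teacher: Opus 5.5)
\emph{Gap: the endpoints $t=0,1$ on an infinite graph.} Write $h_{p,t}(u)$ for the right-hand side of \eqref{cap_eng}. Your treatment of $t\in(0,1)$ is correct and is essentially the paper's proof:
the flux identity for the finite-graph potentials $u_n$, the coarea identity $\int_0^1 h_{p,t}(u)\,dt=D_p(u)$ by Fubini, the Fatou-type bound $h_{p,t}(u)\le C_p(K,\Omega)$ via Lemma \ref{pot_app}, and hence equality for a.e.\ $t$. The upgrade to every $t\notin T$ is not needed. One small slip: in Lemma \ref{gf} the boundary term carries $g$ at the interior vertex. So applying it on $B_n\setminus(K\cup\Omega)$ with $g=u_n$ does not literally produce the values on $K$ and $\Omega$. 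Sum by parts over all of $B_n$ instead, which gives $D_p(u_n;B_n)=\sum_{x\in K}\sum_{y\in B_n\setminus K}|\nabla_{xy}u_n|^{p-1}\mu_{xy}$ at once. The real gap is at $t=0$ and $t=1$ when the graph is infinite. There Fatou gives only $h_{p,0}(u),h_{p,1}(u)\le C_p(K,\Omega)$, and the averaging argument ignores null sets of $t$. So ``handled in the same way'' proves nothing beyond that inequality.

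The paper gets $t=1$ by passing to the limit term by term in $C_p(K,\Omega;B_n)=\sum_{xy\in\partial_e K}|\nabla_{xy}u_n|^{p-1}\mu_{xy}$. It gets $t=0$ by applying the same to $1-u$, the potential of $(\Omega,K)$; by the minimum principle every edge of $\partial_e\{u>0\}$ ends in $\Omega$. That exchange of limit and sum is your missing step, and it needs a finiteness input. Either $\partial_e K$ is finite (e.g.\ $K$ finite, as in Lemma \ref{cap_pt}), or $\sum_{xy\in\partial_e K}\mu_{xy}<\infty$, in which case dominated convergence applies since $|\nabla_{xy}u_n|\le 1$. For $t=0$ the same condition is needed on the $\Omega$ side.

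Without such a condition the endpoint identity actually fails, so no sharpening of your limiting argument can recover it. Take $p=2$, vertices $v_0,v_1,\dots$ and $w_1,w_2,\dots$ with $\mu_{v_iv_{i+1}}=2^i$ and $\mu_{v_iw_i}=1$, and set $K=\{v_0\}$, $\Omega=\{w_i\}_{i\ge1}$. Write $u_i=u(v_i)$ and $I_i=2^i(u_i-u_{i+1})$.
\begin{itemize}
\item Harmonicity gives $I_{i-1}-I_i=u_i>0$.
\item Finite energy forces $u_i\to0$, hence $I_i>0$ and $I_i$ decreases to some $I_\infty$.
\item $I_\infty>0$: otherwise $I_j=\sum_{m>j}(m-j)2^{-m}I_m\le 2^{1-j}I_{j+1}$, contradicting $I_1>I_2$.
\item Summation by parts gives $C_2(K,\Omega)=D_2(u)=I_0=h_{2,1}(u)$.
\item However, $h_{2,0}(u)=\sum_{i\ge1}u_i=I_0-I_\infty<C_2(K,\Omega)$.
\end{itemize}
Exchanging $K$ and $\Omega$ breaks $t=1$ in the same way. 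The endpoint cases should therefore be stated and proved under a finiteness hypothesis of this kind, not extracted from the a.e.\ argument.
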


Lemma \ref{lem_eng} admits a direct interpretation in the context of non-linear electrical networks. The $p$-capacity $C_p(K, \Omega)$ represents the $p$-effective conductance between $K$ and $\Omega$. Accordingly, the term
$$
    \sum_{xy \in \partial_e\Gamma_t} |\nabla_{xy} u|^{p-1} \mu_{xy}
$$
corresponds to the total effective $p$-current flowing across the edge cut $\partial_e\Gamma_t$. The equality \eqref{cap_eng} reflects the conservation of current: due to the $p$-harmonicity of the potential $u$, the total current is conserved across almost every intermediate level set, and in particular across the endpoint cuts corresponding to \(t=0\) and \(t=1\).

\begin{proof}
If \(K=\Omega^c\), then \eqref{cap_eng} is immediate from the definition of \(C_p(K,\Omega)\). For \(0\le t\le1\), define
\[
    h_{p,t}(u):=\sum_{xy\in\partial_e\Gamma_t} |\nabla_{xy}u|^{p-1}\mu_{xy}.
\]

\textbf{Claim 1.}
\begin{equation}\label{cafm_inf}
	D_p(u)=\int_0^1 h_{p,t}(u)\,dt.
\end{equation}

Indeed, for each edge \(xy\in E\), let
\[
I_{xy}:=[\min\{u(x),u(y)\},\max\{u(x),u(y)\}).
\]
Since \(xy\in \partial_e\Gamma_t\) if and only if \(t\in I_{xy}\), Fubini's theorem gives
\begin{align}\label{cafm}
	\int_0^1 h_{p,t}(u)\,dt
	&= \int_0^1 \sum_{xy\in E} |\nabla_{xy}u|^{p-1}\mathbf{1}_{I_{xy}}(t)\mu_{xy}\,dt \\
	&= \sum_{xy\in E} |\nabla_{xy}u|^{p-1}\mu_{xy}\,|I_{xy}| \\
	&= \sum_{xy\in E} |\nabla_{xy}u|^p\mu_{xy}
	= D_p(u).
\end{align}

\smallskip
\textbf{Claim 2.} For every \(t\in(0,1)\), we have \(h_{p,t}(u)\le D_p(u)\).

Assume first that \(\Omega^c\) is finite. Fix \(0<s<t<1\) and set
\[
    L^-:=\{x\in V:\ u(x)\le s\},\qquad
    L:=\{x\in V:\ s<u(x)\le t\},\qquad
    L^+:=\{x\in V:\ u(x)>t\}.
\]
Since \(L\subset \Omega^c\) and \(\Omega^c\) is finite, the set \(L\) is finite. Define the oriented \(p\)-current by
\[
    J(x,y):=-\mu_{xy}|\nabla_{xy}u|^{p-2}\nabla_{xy}u
    =\mu_{xy}|u(x)-u(y)|^{p-2}(u(x)-u(y)).
\]
Then \(J(y,x)=-J(x,y)\), and for every \(x\in \Omega^c\setminus K\),
\[
    \sum_{y\sim x}J(x,y)=-\mu(x)\Delta_pu(x)=0.
\]
Summing over \(x\in L\) and canceling the contributions of edges with both endpoints in \(L\), we obtain
\[
    0=\sum_{\substack{x\in L\\ y\in L^-}}J(x,y)
      +\sum_{\substack{x\in L\\ y\in L^+}}J(x,y).
\]
Moreover, if \(x\in L\) and \(y\in L^-\), then \(u(x)>u(y)\), so \(J(x,y)=|J(x,y)|\); if \(x\in L\) and \(y\in L^+\), then \(u(x)<u(y)\), so \(J(x,y)=-|J(x,y)|\). Hence
\begin{equation}\label{eq:strip-balance}
    \sum_{\substack{x\in L\\ y\in L^-}}|J(x,y)|
    =
    \sum_{\substack{x\in L\\ y\in L^+}}|J(x,y)|.
\end{equation}

Set
\[
    C_{s,t}:=\sum_{\substack{x\in L^+\\ y\in L^-}} |J(x,y)|.
\]
Since \(\Gamma_s=L\cup L^+\) and \(\Gamma_t=L^+\), we have
\[
    h_{p,s}(u)=C_{s,t}+\sum_{\substack{x\in L\\ y\in L^-}} |J(x,y)|
\]
and
\[
    h_{p,t}(u)=C_{s,t}+\sum_{\substack{x\in L\\ y\in L^+}} |J(x,y)|.
\]
Combining these identities with \eqref{eq:strip-balance}, we obtain
\[
    h_{p,s}(u)=h_{p,t}(u).
\]
Thus \(h_{p,t}(u)\) is constant on \(t\in (0,1)\), and \eqref{cafm} shows that this constant equals \(D_p(u)\). Since only finitely many edges are involved, letting \(s\downarrow0\) and \(t\uparrow1\) also gives
\[
    h_{p,0}(u)=h_{p,1}(u)=D_p(u).
\]
Hence \eqref{cap_eng} holds for every \(t\in[0,1]\) whenever \(\Omega^c\) is finite.

Now, assume \(\Omega^c\) is infinite.

Let \(\{B_n\}\) be an exhaustion of \(V\), and let \(u_n\) be the \(p\)-potential associated with \(C_p(K,\Omega;B_n)\). By Lemma \ref{pot_app},
\[
    u_n(x)\to u(x)\quad \text{for every }x\in V,
    \qquad
    D_p(u_n)\to D_p(u)=C_p(K,\Omega).
\]
For each \(n\), let
\[
    \Gamma_t^n:=\{x\in V: u_n(x)>t\}.
\]
By the finite case, for every \(n\) and every \(t\in[0,1]\),
\[
    D_p(u_n)=\sum_{xy\in \partial_e\Gamma_t^n} |\nabla_{xy}u_n|^{p-1}\mu_{xy}.
\]

Fix \(t\in(0,1)\), and let \(F\) be any finite subset of \(\partial_e\Gamma_t\). For each \(xy\in F\), orient the edge so that \(u(x)>t\ge u(y)\). Since \(F\) is finite, there exists \(t'>t\) such that
\[
    u(x)>t' \quad\text{and}\quad u(y)<t'
    \qquad\text{for every }xy\in F.
\]
By the pointwise convergence \(u_n\to u\), there exists \(n_0\) such that for every \(n\ge n_0\),
\[
    u_n(x)>t' \quad\text{and}\quad u_n(y)<t'
    \qquad\text{for every }xy\in F.
\]
Hence \(F\subset \partial_e\Gamma_{t'}^n\) for all \(n\ge n_0\). Since \(F\) is finite,
\[
    \sum_{xy\in F} |\nabla_{xy}u_n|^{p-1}\mu_{xy}\longrightarrow
    \sum_{xy\in F} |\nabla_{xy}u|^{p-1}\mu_{xy}.
\]
Therefore,
\begin{align*}
    \sum_{xy\in F} |\nabla_{xy}u|^{p-1}\mu_{xy}
    &= \lim_{n\to\infty}\sum_{xy\in F} |\nabla_{xy}u_n|^{p-1}\mu_{xy} \\
    &\le \lim_{n\to\infty}\sum_{xy\in \partial_e\Gamma_{t'}^n} |\nabla_{xy}u_n|^{p-1}\mu_{xy} \\
    &= \lim_{n\to\infty} D_p(u_n)
     = D_p(u).
\end{align*}
Letting \(F\uparrow \partial_e\Gamma_t\), we obtain \(h_{p,t}(u)\le D_p(u)\), which proves Claim 2.

Combining \eqref{cafm_inf} with Claim 2, we conclude that
\[
    h_{p,t}(u)=D_p(u)\qquad\text{for almost every }t\in(0,1).
\]

To treat the endpoint \(t=1\), note that for each \(n\),
\[
    D_p(u_n)=\sum_{xy\in\partial_e K} |\nabla_{xy}u_n|^{p-1}\mu_{xy}.
\]
Passing to the limit and using Lemma \ref{pot_app}, we obtain
\[
    h_{p,1}(u)=D_p(u).
\]
Finally, the function \(1-u\) is the \(p\)-potential of \((\Omega,K)\), and the same argument applied to \(1-u\) yields
\[
    h_{p,0}(u)=D_p(u).
\]
This completes the proof.
\end{proof}

\begin{lemma}\label{cap_pt}
    For \(K \subset U\), assume that \(u\) is the $p$-potential function of \((K,U)\),  let \(\sigma=-\Delta_p u\), then
    \begin{align} \label{cro1}
        \operatorname{cap}_{p}(K, U)= \sigma(K).
    \end{align}
\end{lemma}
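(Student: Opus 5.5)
The plan is to identify $\sigma(K)$ with the total $p$-current leaving $K$, and then apply Lemma \ref{lem_eng} at the endpoint $t=1$. Throughout, $\sigma(K)$ is read as $\sum_{x\in K}\sigma(x)\mu(x)$, the natural measure attached to $\sigma=-\Delta_p u$ with respect to the vertex weight. By the preliminaries, $\operatorname{cap}_p(K,U)=C_p(K,U^c)$ in both cases: for finite $U$ directly, and for infinite $U$ via the exhaustion identity. If $\operatorname{cap}_p(K,U)=\infty$ the statement is vacuous or degenerate, so I assume it is finite. Then $u$ is the $p$-potential of $(K,U^c)$, with $0\le u\le 1$, $u=1$ on $K$ and $u=0$ on $U^c$.

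\textbf{Step 1: rewrite $\sigma(K)$ as a boundary current.} For $x\in K$,
\[
\sigma(x)\mu(x)=-\sum_{y\sim x}\mu_{xy}|\nabla_{xy}u|^{p-2}\nabla_{xy}u=\sum_{y\sim x}\mu_{xy}|u(x)-u(y)|^{p-2}(u(x)-u(y)).
\]
Since $u(x)=1\ge u(y)$, every summand is nonnegative. Summing over $x\in K$ gives a series of nonnegative terms, so rearranging is harmless. Edges with both endpoints in $K$ contribute zero, because $u$ equals $1$ at both ends. The remaining terms are exactly the edges $xy$ with $x\in K$ and $y\notin K$, each contributing $|\nabla_{xy}u|^{p-1}\mu_{xy}$. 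Neighbours $y\notin K$ with $u(y)=1$ add zero, so there is no issue with them. Hence
\[
\sigma(K)=\sum_{xy\in\partial_e K}|\nabla_{xy}u|^{p-1}\mu_{xy}=h_{p,1}(u).
\]

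\textbf{Step 2: invoke conservation of current.} Lemma \ref{lem_eng} with $\Gamma_1=K$ and $t=1$ gives $h_{p,1}(u)=C_p(K,U^c)$. Combining this with Step 1 and the identity $\operatorname{cap}_p(K,U)=C_p(K,U^c)$ yields $\operatorname{cap}_p(K,U)=\sigma(K)$.

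\textbf{Main obstacle.} The only delicate point is the endpoint case $t=1$ of Lemma \ref{lem_eng} when $U$ is infinite (equivalently, the "$\Omega^c$ infinite" case there). In that case $\partial_e K$ may be infinite, and one needs the convergence of the approximating potentials $u_n$ on an exhaustion, as in Lemma \ref{pot_app}. Since the lemma already covers $t=1$, I would simply cite it. As a safeguard I would also verify the finite-$U$ case directly. There Green's formula (Lemma \ref{gf}), applied with $g=u$ on the finite set $U$, gives
\[
D_p(u)=\sum_x(-\Delta_p u)\,u\,\mu=\sum_{x\in K}\sigma(x)\mu(x),
\]
because $\Delta_p u=0$ on $U\setminus K$, $u=1$ on $K$, and $u=0$ on $\partial U$. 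For general $U$ the conclusion then follows by passing to the limit along the exhaustion using Lemma \ref{pot_app}.
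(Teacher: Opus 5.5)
Your proposal is correct and is essentially the paper's proof: use $u\equiv 1$ on $K$ to discard edges inside $K$, identify $\sigma(K)$ with the boundary current $h_{p,1}(u)$ over $\partial_e K$ (the sign is fixed by $u(x)=1\ge u(y)$), and conclude with Lemma \ref{lem_eng} at $t=1$. Your Green's-formula check for finite $U$ is a valid extra, but your limiting step for infinite $U$ does not go through as stated: when $\partial_e K$ is infinite, pointwise convergence from Lemma \ref{pot_app} together with Fatou gives only $\sigma(K)\le\operatorname{cap}_p(K,U)$, so relying on Lemma \ref{lem_eng} as your main line is the right choice.
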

\begin{proof} 
Since \(u\equiv 1\) on \(K\), we have \(\nabla_{xy}u=0\) whenever \(x,y\in K\). Hence
\begin{align*}
    \sigma(K)
    &= \sum_{x\in K}\sigma(x) m(x)
     = -\sum_{x\in K}\Delta_p u(x) m(x) \\
    &= -\sum_{x\in K}\sum_{y\sim x}\mu_{xy} |\nabla_{xy}u|^{p-2}\nabla_{xy}u \\
    &= -\sum_{x\in K}\sum_{y\in \partial K}\mu_{xy} |\nabla_{xy}u|^{p-2}\nabla_{xy}u.
\end{align*}
For \(x\in K\) and \(y\in \partial K\), we have \(u(x)=1\ge u(y)\), so
\[
    -|\nabla_{xy}u|^{p-2}\nabla_{xy}u=|\nabla_{xy}u|^{p-1}.
\]
Therefore
\[
    \sigma(K)=\sum_{x\in K}\sum_{y\in \partial K} |\nabla_{xy}u|^{p-1}\mu_{xy}=h_{p,1}(u).
\]
Applying Lemma \ref{lem_eng} with \(t=1\), we obtain
\[
    \sigma(K)=\operatorname{cap}_p(K,U).
\]
This proves \eqref{cro1}.
\end{proof}

\begin{lemma}\label{cap_lvs}
    Let $K \subset U$ and let $u$ be the $p$-potential function of the condenser $(K, U)$. For any $0 < \lambda < 1$ such that the level set $\Gamma_\lambda := \{x \in V : u(x) > \lambda\}$ satisfies $\overline{\Gamma_\lambda} \subset U$, the following holds, 
  \begin{align}
       \lambda^{p-1}\operatorname{cap}_{p}(\Gamma_\lambda, U)\le \operatorname{cap}_{p}(K, U) \le  \lambda^{p-1} \operatorname{cap}_{p}(\overline{\Gamma_\lambda}, U).
  \end{align}
\end{lemma}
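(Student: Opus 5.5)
The plan is to compare $u$ with rescaled truncations of itself. Each truncation is an admissible or extremal function for one of the two condensers appearing in the statement. The outer sets are the same in both cases; only the inner sets change.

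\textbf{Lower bound.} Consider the truncation $w:=\min\{u/\lambda,1\}$. It equals $1$ on $\Gamma_\lambda$ and vanishes outside $U$, so it is admissible for $\operatorname{cap}_p(\Gamma_\lambda,U)$. Since truncation does not increase $|\nabla_{xy}\cdot|$, we get
\[
\operatorname{cap}_p(\Gamma_\lambda,U)\le D_p(w)=\lambda^{-p}\sum_{xy:\ \min\{u(x),u(y)\}\le\lambda}|\nabla_{xy}\min\{u,\lambda\}|^p\mu_{xy}.
\]
This bound alone would only give $\lambda^{p}$. To reach $\lambda^{p-1}$ we need the layer-cake identity from Claim 1 in the proof of Lemma \ref{lem_eng}. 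It gives
\[
D_p(\min\{u,\lambda\})\le\int_0^\lambda h_{p,t}(u)\,dt,
\]
because on each edge the truncated gradient satisfies $|\nabla\min\{u,\lambda\}|^p\le|\nabla u|^{p-1}\,|I_{xy}\cap[0,\lambda)|$. By Lemma \ref{lem_eng}, $h_{p,t}(u)=\operatorname{cap}_p(K,U)$ for a.e.\ $t$. Hence $D_p(\min\{u,\lambda\})\le\lambda\operatorname{cap}_p(K,U)$. Dividing by $\lambda^p$ gives $\operatorname{cap}_p(\Gamma_\lambda,U)\le\lambda^{1-p}\operatorname{cap}_p(K,U)$, which is the left inequality.

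\textbf{Upper bound.} Let $v$ be the $p$-potential of $(\overline{\Gamma_\lambda},U)$. We compare $u$ with $\lambda v$ on $U\setminus\overline{\Gamma_\lambda}$. Both functions are $p$-harmonic there, since $\lambda v$ is $p$-harmonic by homogeneity. Both vanish outside $U$. On $\partial\Gamma_\lambda\subset\overline{\Gamma_\lambda}$ we have $u\le\lambda=\lambda v$, since those points lie outside $\Gamma_\lambda$. On $\Gamma_\lambda$ itself, however, $u>\lambda$, so we must work with a set on which the two functions are ordered.

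We therefore use currents. By Lemma \ref{lem_eng} at $t=1$ for the condenser $(\overline{\Gamma_\lambda},U)$, we have $\operatorname{cap}_p(\overline{\Gamma_\lambda},U)=h_{p,1}(v)$, the flux of $v$ out of $\overline{\Gamma_\lambda}$. The flux of $\lambda v$ is then $\lambda^{p-1}h_{p,1}(v)$. Now compare $u$ and $\lambda v$ on $\Omega':=U\setminus\overline{\Gamma_\lambda}$. Here $u\le\lambda v$ on the boundary, namely on $\partial\Gamma_\lambda$ (which is contained in $\overline{\Gamma_\lambda}$) and outside $U$. The comparison principle, Lemma \ref{cp}, applied on an exhaustion by finite sets in the style of Lemma \ref{pot_app}, then gives $u\le\lambda v$ on $\Omega'$.

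For an edge $xy$ with $x\in\partial\Gamma_\lambda$ and $y\in\Omega'$, we have $u(x)-u(y)\le\lambda-u(y)$ with $u(x)\le\lambda$. This gives $u(x)-u(y)\le \lambda v(x)-\lambda v(y)$, since $u(y)\le\lambda v(y)$ and $\lambda v(x)=\lambda$. Edges from $\Gamma_\lambda$ into $\Omega'$ are impossible, because $\overline{\Gamma_\lambda}$ contains all neighbours of $\Gamma_\lambda$. Hence the flux of $u$ across the cut $\partial_e\overline{\Gamma_\lambda}$ is at most $\lambda^{p-1}\operatorname{cap}_p(\overline{\Gamma_\lambda},U)$.

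It remains to identify the flux of $u$ across this cut with $\operatorname{cap}_p(K,U)$. This is the step I expect to be the main obstacle. Lemma \ref{lem_eng} only gives conservation across level-set cuts $\partial_e\Gamma_t$, whereas $\overline{\Gamma_\lambda}$ is not a level set. To handle it, I will repeat the strip-balance argument of Claim 2 with $L=\overline{\Gamma_\lambda}\setminus K$. This set lies in the region where $u$ is $p$-harmonic, provided $K$ sits inside $\Gamma_\lambda$, which holds since $u=1>\lambda$ on $K$. Summing $\Delta_pu=0$ over $L$ shows that the net flux out of $\overline{\Gamma_\lambda}$ equals the flux out of $K$, namely $\sigma(K)=\operatorname{cap}_p(K,U)$ by Lemma \ref{cap_pt}.

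Finiteness of $L$ is needed for this summation. If $L$ is infinite, I will pass to finite exhaustions and take limits via Lemma \ref{pot_app}, as in Lemma \ref{lem_eng}. Finally, any backward edges in the cut have the wrong sign and only decrease the net flux, which is consistent with the inequality we want; the sign bookkeeping there needs care.
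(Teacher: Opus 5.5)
The upper bound has a sign error at its key step, so as written it does not go through; the lower bound is correct.

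\textbf{Lower bound.} Your argument is correct and differs from the paper's. The paper takes $v$ to be the potential of $(\Gamma_\lambda,U)$ and proves $u\ge\lambda v$ on $U\setminus\Gamma_\lambda$ by the comparison principle. It then compares the fluxes through the outer cut $\partial_e U$ using Lemma \ref{lem_eng} at $t=0$. Your route uses the truncation $\min\{u/\lambda,1\}$, the edgewise bound $|\nabla_{xy}\min\{u,\lambda\}|^p\le|\nabla_{xy}u|^{p-1}|I_{xy}\cap[0,\lambda)|$, and Claim 2 in the proof of Lemma \ref{lem_eng}. It is purely variational, avoids the comparison principle, and does not need the hypothesis $\overline{\Gamma_\lambda}\subset U$.

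\textbf{Upper bound: the gap.} From $u(x)\le\lambda$ you get $u(x)-u(y)\le\lambda-u(y)$. To continue to $\lambda-\lambda v(y)=\lambda v(x)-\lambda v(y)$ you would need $u(y)\ge\lambda v(y)$. The comparison principle gives the opposite, $u(y)\le\lambda v(y)$, so $\lambda-u(y)\ge\lambda-\lambda v(y)$. The edgewise bound on the inner cut $\partial_e\overline{\Gamma_\lambda}$ therefore does not follow. It is actually reversed at any $x\in\partial\Gamma_\lambda$ with $u(x)=\lambda$ that has a neighbour $y$ with $u(y)<\lambda v(y)$.

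The underlying principle: an ordering $u\le\lambda v$ bounds $u(x)-u(y)$ by $\lambda v(x)-\lambda v(y)$ only on edges whose lower endpoint $y$ satisfies $u(y)=\lambda v(y)$. That holds on the outer cut, where both functions vanish, but not on the inner cut.

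\textbf{Upper bound: the fix.} Compare fluxes at $\partial_e U$ instead. Every $x\in U$ adjacent to $U^c$ lies in $U\setminus\Gamma_\lambda$; this is where $\overline{\Gamma_\lambda}\subset U$ is used. At such $x$ you have $u(x)\le\lambda v(x)$, either from your comparison on $U\setminus\overline{\Gamma_\lambda}$ or because $u\le\lambda=\lambda v$ on $\partial\Gamma_\lambda$. Applying Lemma \ref{lem_eng} at $t=0$ to $u$ and to $v$ gives
\[
\operatorname{cap}_p(K,U)=\sum_{x\in U}\sum_{y\in\partial U}u(x)^{p-1}\mu_{xy}\le\lambda^{p-1}\sum_{x\in U}\sum_{y\in\partial U}v(x)^{p-1}\mu_{xy}=\lambda^{p-1}\operatorname{cap}_p(\overline{\Gamma_\lambda},U).
\]
This is exactly the paper's argument. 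Working at the outer cut also removes what you flagged as the main obstacle. You no longer need flux conservation across the non-level cut $\partial_e\overline{\Gamma_\lambda}$, the exhaustion when $\overline{\Gamma_\lambda}\setminus K$ is infinite, or the sign bookkeeping for backward edges.
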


\begin{proof}
Let \(v\) be the \(p\)-potential function of \((\Gamma_\lambda, U)\). Then \(v=1\) on \(\Gamma_\lambda\), \(v=0\) on \(U^c\), and \(v\) is \(p\)-harmonic on \(U\setminus \Gamma_\lambda\). Since \(u/\lambda\ge 1\) on \(\Gamma_\lambda\), \(u/\lambda=0\) on \(U^c\), and \(u/\lambda\) is also \(p\)-harmonic on \(U\setminus \Gamma_\lambda\), the comparison principle applied on an exhaustion of \(U\setminus \Gamma_\lambda\) yields
\[
    u\ge \lambda v \qquad \text{on } U\setminus \Gamma_\lambda.
\]
Because \(\overline{\Gamma_\lambda}\subset U\), every vertex \(x\in U\) adjacent to \(\partial U\) lies in \(U\setminus \Gamma_\lambda\). Applying Lemma \ref{lem_eng} with \(t=0\) to \(u\) and \(v\), we obtain
\begin{align*}
    \lambda^{p-1}\operatorname{cap}_{p}(\Gamma_\lambda, U)
    &= \sum_{x \in U} \sum_{y \in \partial U} (\lambda v(x))^{p-1} \mu_{xy} \\
    &\le \sum_{x \in U} \sum_{y \in \partial U} u(x)^{p-1} \mu_{xy}
     = \operatorname{cap}_{p}(K, U).
\end{align*}

For the reverse inequality, let \(v'\) be the \(p\)-potential function of \((\overline{\Gamma_\lambda}, U)\). Then \(v'=1\) on \(\overline{\Gamma_\lambda}\), \(v'=0\) on \(U^c\), and \(v'\) is \(p\)-harmonic on \(U\setminus \overline{\Gamma_\lambda}\). On \(\partial U\), both \(u\) and \(\lambda v'\) vanish. On the inner boundary \(\partial\Gamma_\lambda\), we have \(u\le \lambda=\lambda v'\). Therefore, applying the comparison principle on an exhaustion of \(U\setminus \overline{\Gamma_\lambda}\), we obtain
\[
    u\le \lambda v' \qquad \text{on } U\setminus \overline{\Gamma_\lambda}.
\]
Again using Lemma \ref{lem_eng} with \(t=0\), we deduce
\begin{align*}
    \operatorname{cap}_{p}(K, U)
    &= \sum_{x \in U} \sum_{y \in \partial U} u(x)^{p-1} \mu_{xy} \\
    &\le \sum_{x \in U} \sum_{y \in \partial U} (\lambda v'(x))^{p-1} \mu_{xy}
     = \lambda^{p-1} \operatorname{cap}_{p}(\overline{\Gamma_\lambda}, U).
\end{align*}
This completes the proof.
\end{proof}

\section{Theorems on Uniqueness and Liouville Property}\label{theos}

We now prove the structural results announced in the introduction: the
equivalence between $p$-massiveness and strengthened nonuniqueness for the
exterior Dirichlet problem, and the characterization of bounded nonconstant
$p$-harmonic functions by disjoint massive sets.

In this section, for convenience, we call a function $u \in \ell(\overline{\Omega})$ admissible on $\Omega$ if $u=0$ on $\partial \Omega$ and $0 < u < 1$ on $\Omega$. 

\begin{theorem}\label{theo_uniqp}
Let $G = (V, E)$ be an infinite, connected, and locally finite graph. Then the following statements are equivalent:
\begin{enumerate}
    \item $\Omega \subset V$ is a $p$-massive set.
    \item For some (equivalently, every) \(f \in \mathcal{B}(\partial \Omega)\), the Dirichlet problem~\eqref{dp} admits two bounded solutions \(u\) and \(v\) such that \(\sup_{\Omega} u \neq \sup_{\Omega} v\).
\end{enumerate}
\end{theorem}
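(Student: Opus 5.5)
The plan is to prove (1)$\Rightarrow$(2) for every $f$ and (2)$\Rightarrow$(1) from the existence of some $f$. The central object is, for bounded $f$, the \emph{minimal-growth} solution obtained by exhaustion. Take a finite exhaustion $\{\Omega_n\}$ of $\Omega$ and let $u_n$ solve $\Delta_p u_n=0$ in $\Omega_n$ with $u_n=f$ on $\partial\Omega\cap\partial\Omega_n$ and $u_n=\inf f$ on $\partial\Omega_n\setminus\partial\Omega$; this uses Lemma \ref{duni}. By the comparison principle (Lemma \ref{cp}), $u_n$ is nondecreasing in $n$ and bounded by $\sup f$. Hence $u_n\uparrow \underline u$, and $\underline u$ is $p$-harmonic in $\Omega$, because locally finite sums pass to the limit. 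Symmetrically, using $\sup f$ on the artificial boundary, we get a decreasing limit $\overline u$. By comparison, every bounded solution $w$ with values in $[\inf f,\sup f]$ satisfies $\underline u\le w\le\overline u$.

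\textbf{Step 1: (1)$\Rightarrow$(2) for every $f$.} Let $h$ be the massive function, so that $0<h<1$ in $\Omega$, $h=1$ on $\Omega^c$, and $\Delta_p h=0$ in $\Omega$. Set $e=1-h$. Then $e$ is $p$-harmonic in $\Omega$, vanishes on $\partial\Omega$, and satisfies $0<e<1$. For bounded $f$ with $M=\sup f$, I consider $w=M+c\,e$ for some $c>0$. This is $p$-harmonic because the $p$-Laplacian is homogeneous and translation invariant, but its boundary values are $M$, not $f$. So I instead compare the maximal and minimal solutions.

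The key claim is that when $\Omega$ is massive, $\sup_\Omega \overline u^{(M+c)}\ge M+c\,\sup e$, where $\overline u^{(M+c)}$ is the maximal-type solution built with upper artificial value $M+c$. Then $\sup_\Omega\overline u^{(M+c)}\ge M+c\sup e>M\ge \sup_\Omega \underline u$. Here $\underline u$ has $\sup\le M$ by comparison, since its approximants $u_n$ satisfy $\sup u_n\le M$.

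So the two solutions are $\underline u$, with boundary values $f$ and $\sup\le M$, and $v:=\lim_n v_n$, where $v_n$ solves the problem on $\Omega_n$ with data $f$ on $\partial\Omega$ and $M+c$ on $\partial\Omega_n\setminus\partial\Omega$. By comparison on $\Omega_n$, $v_n\ge M+c\,e$: on $\partial\Omega$ we have $f\ge M+c\cdot 0$ only if $f\equiv M$, so this needs more care. Instead I compare $v_n$ with $\inf f+(M+c-\inf f)e$. On $\partial\Omega$ this function equals $\inf f\le f$, and on $\partial\Omega_n\setminus\partial\Omega$ it is at most $M+c$. Hence $v\ge \inf f+(M+c-\inf f)e$, and so $\sup v\ge \inf f+(M+c-\inf f)\sup e$. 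Since $\sup e>0$, choosing $c$ large makes this exceed $M$. Hence $\sup v\ne\sup\underline u$.

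\textbf{Step 2: (2)$\Rightarrow$(1).} Given bounded solutions $u,v$ with, say, $\sup u<\sup v$, first note that $\sup v>\sup_{\partial\Omega}f$ must hold for at least one of them. Otherwise both would lie in $[\inf f,\sup f]$, and the argument needs more; see the obstacle below. Suppose $S:=\sup_\Omega v>\sup f=:M$. Define $h=(S-v)/(S-M)$ on $\Omega$ and $h=1$ on $\Omega^c$. Then $\Delta_p h=0$ in $\Omega$ (only neighbors in $\overline\Omega$ matter, and there $h\ge$ the correct values after replacing $h$ on $\partial\Omega$ by $(S-f)/(S-M)\ge1$). Because $h$ then becomes only superharmonic, I use it as a supersolution: the minimal solution $g$ of the massive problem, obtained by exhaustion with $g_n=1$ on $\Omega^c$ and $0$ on the artificial boundary, is $p$-harmonic with $0\le g\le \min(1,h)$. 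Since $\inf_\Omega h=0$, $g\not\equiv 1$. The strong maximum principle on connected components of $\Omega$ then gives $0<g<1$ wherever $g$ is nonconstant. Components on which $g\equiv0$ would contradict $g=1$ on adjacent boundary vertices, and $\Omega^c\ne\emptyset$ because $f$ is given on $\partial\Omega$.

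\textbf{Main obstacle.} The hard case is when both $\sup u$ and $\sup v$ lie within $[\inf f,\sup f]$. Here I would use $w=\max$-type comparison: $v-\underline u$ is not harmonic for $p\neq2$, so linearity fails. I would first try to show that if $\Omega$ is not massive, then the minimal and maximal solutions coincide, because $\overline u_n-\underline u_n$ is controlled by $(\sup f-\inf f)$ times the harmonic measure of the artificial boundary, which tends to $0$ for a nonmassive set. This bound itself requires a nonlinear comparison: $\overline u_n\le \underline u_m+ (\sup f-\inf f)\,\omega$, and establishing it without linearity is the step I expect to be delicate.
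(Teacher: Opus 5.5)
Your Step~1 is correct and is essentially the paper's argument. The paper also solves on $\Omega\cap B_k$ with a large constant on the artificial boundary and bounds the approximants from below by a multiple of the admissible function (your $e$). Your barrier $\inf f+(M+c-\inf f)e$ only removes the paper's normalization $f\ge 0$, and you construct the first solution $\underline u$ explicitly, which the paper takes for granted.

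Your Step~2 is correct whenever some bounded solution leaves $[\inf f,\sup f]$ (apply it to $-v,-f$ for the infimum). It is also more direct than the paper's route: the single supersolution $(S-v)/(S-M)$ dominates the minimal solution $g$ of the massive problem and forces $\inf_\Omega g=0$.

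The genuine gap is the case you flag yourself: two bounded solutions with values in $[\inf f,\sup f]$ and different suprema over $\Omega$. Nothing in the proposal treats it. The estimate you hope for, $\overline u_n\le\underline u_m+(\sup f-\inf f)\,\omega$, is a superposition principle. For $p\neq 2$ the right-hand side is in general neither $p$-harmonic nor $p$-superharmonic, so Lemma~\ref{cp} cannot be applied. Showing $\underline u=\overline u$ on non-massive sets would amount to a uniqueness theorem for bounded exterior solutions, which you do not supply. Hence, for the statement as written (suprema over $\Omega$), (2)$\Rightarrow$(1) is not established.

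The paper's proof of (2)$\Rightarrow$(1) has no case split. It picks $x_0$ with $u(x_0)>\sup_\Omega v$ and passes to the component $A_0$ of $\{u>v\}$ containing $x_0$; this component is infinite by comparison, using only that $v$ plus a constant is $p$-harmonic. It then passes to the component $C$ of $\{u>a\}\cap A_0$ with $a\in(\sup_\Omega v,u(x_0))\setminus u(A_0)$, and obtains the massive function on $C$ as a normalized exhaustion limit with value $a$ on $\partial C$.

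Be aware, however, that its key claim $u\le v\le\sup_\Omega v<a$ on $\partial A_0$ fails at points of $\partial A_0\cap\partial\Omega$ where $f>\sup_\Omega v$. For instance, take $\mathbb{Z}^d\setminus\{0\}$ with $d>p$, $f\equiv 1$, $e=1-h$ for the equilibrium potential $h$ of $\{0\}$, and $u=1+c_1e$, $v=1+c_2e$ with $c_2<c_1<0$. Then $\sup_\Omega u>\sup_\Omega v$, $A_0=\Omega$ and $\partial A_0=\{0\}$, where $u=1>a$. This $\Omega$ is massive, but not by that argument.

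So the paper's argument is sound only when $f\le\sup_\Omega v$ on $\partial\Omega$, for example if suprema are read over $\overline\Omega$. Under that reading, unequal suprema force the larger one above $\sup_{\partial\Omega}f$, and your Step~2 alone completes the proof, more simply than the paper. Under the literal reading, the paper's argument does not settle your remaining case either. You would need a genuinely new idea there, or (2) should be restated with $\sup_{\overline\Omega}$.
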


\begin{proof}
(1) $\Rightarrow$ (2):  
Suppose \(\Omega\) is \(p\)-massive, and let \(h\) be an admissible \(p\)-harmonic function on \(\Omega\). We show that the conclusion holds for every \(f\in \mathcal{B}(\partial \Omega)\). Let \(v\) be any bounded solution to \eqref{dp} with boundary datum \(f\). Replacing \(v\) and \(f\) by \(v-m\) and \(f-m\), where \(m:=\inf_{\partial\Omega} f\), we may assume that
\[
    f\ge 0 \qquad \text{on } \partial\Omega.
\]
By the comparison principle, \(v\ge 0\) in \(\Omega\). Set
\[
    M:=\sup_{x\in \Omega} v(x).
 \]
Choose \(c> M/\sup_{\Omega} h\). Then there exists \(x_0\in \Omega\) such that
\[
    ch(x_0)>M\ge v(x_0).
\]

For each \(k\ge 1\), let \(\Omega_k:=B_k\cap \Omega\), and define
\[
    F_k(x):=
    \begin{cases}
        f(x), & x\in \partial \Omega\cap \partial \Omega_k,\\
        c, & x\in \partial \Omega_k\setminus \partial \Omega.
    \end{cases}
\]
Let \(v_k\) be the unique solution of
\[
    \begin{cases}
        \Delta_p w = 0, & \text{in } \Omega_k, \\
        w = F_k, & \text{on } \partial \Omega_k.
    \end{cases}
\]
Since \(v\le F_k\) on \(\partial \Omega_k\), the comparison principle gives \(v\le v_k\) in \(\Omega_k\). Moreover, \(ch=0\) on \(\partial \Omega\) and \(0<h<1\) in \(\Omega\), so \(ch\le F_k\) on \(\partial \Omega_k\), and hence
\[
    ch\le v_k\le c \qquad \text{in } \Omega_k.
\]
By a diagonal argument, \(\{v_k\}\) has a pointwise convergent subsequence with limit \(v'\). Then \(v'\) is a bounded solution of \eqref{dp}, and
\[
    v'(x_0)\ge ch(x_0)>M=\sup_{\Omega} v.
\]
Hence
\[
    \sup_{\Omega} v' > \sup_{\Omega} v,
\]
so \(v\) and \(v'\) satisfy (2). This proves the statement for every \(f\in \mathcal{B}(\partial \Omega)\).

\medskip

(2) $\Rightarrow$ (1):  
Assume that, for some \(f\in \mathcal{B}(\partial \Omega)\), the Dirichlet problem \eqref{dp} admits two bounded solutions \(u\) and \(v\) such that
\[
    \sup_{\Omega} u \neq \sup_{\Omega} v.
\]
Replacing \(u\) and \(v\) if necessary, we may assume that
\[
    \sup_{\Omega} u > \sup_{\Omega} v.
\]
Choose \(x_0\in \Omega\) such that
\[
    u(x_0)>\sup_{\Omega} v.
\]
Then \(x_0\in A:=\{x\in \Omega:\ u(x)>v(x)\}\). Let \(A_0\) be the connected component of \(A\) containing \(x_0\). If \(A_0\) were finite, then \(u\le v\) on \(\partial A_0\), and the comparison principle would imply \(u\le v\) in \(A_0\), a contradiction. Thus \(A_0\) is infinite.

Since \(A_0\) is countable, we may choose
\[
    a\in \bigl(\sup_{\Omega} v,u(x_0)\bigr)\setminus u(A_0).
\]
Let \(C\) be the connected component containing \(x_0\) of the superlevel set
\[
    \{x\in A_0:\ u(x)>a\}.
\]
Because \(u\le v\le \sup_{\Omega} v<a\) on \(\partial A_0\), and \(a\notin u(A_0)\), every point of \(\partial C\) satisfies \(u<a\). If \(C\) were finite, then comparing \(u\) with the constant function \(a\) on \(C\) would give \(u\le a\) in \(C\), a contradiction. Thus \(C\) is infinite.

Set
\[
    b:=\sup_C u.
\]
For each integer \(k\ge 1\), let \(C_k:=B_k\cap C\), and define the boundary function \(H_k\colon \partial C_k\to \mathbb{R}\) by
\[
    H_k(x)=
    \begin{cases}
        a,& \text{if } x\in \partial C\cap \partial C_k,\\
        u(x),& \text{if } x\in \partial C_k\setminus \partial C.
    \end{cases}
\]
Consider the Dirichlet problem
\[
    \begin{cases}
        \Delta_p z=0,& \text{in }C_k,\\
        z=H_k,& \text{on }\partial C_k.
    \end{cases}
\]
By Lemma~\ref{duni}, it admits a unique solution, denoted by \(z_k\). Since \(u\le H_k\) on \(\partial C_k\), the comparison principle yields
\[
    u\le z_k \qquad \text{in }C_k.
\]
Moreover, \(a\le H_k\le b\) on \(\partial C_k\), and therefore
\[
    a<u\le z_k\le b \qquad \text{in }C_k.
\]
By a diagonal argument, \(\{z_k\}\) admits a pointwise convergent subsequence \(\{z_{k_i}\}\). Define
\[
    h:=\lim_{i\to\infty}\frac{z_{k_i}-a}{b-a}.
\]
Then \(h\) is \(p\)-harmonic on \(C\), and for every \(x\in C\),
\[
    0<\frac{u(x)-a}{b-a}\le h(x)\le 1.
\]
If \(y\in \partial C\), then for all sufficiently large \(i\) we have \(y\in \partial C\cap \partial C_{k_i}\), so \(z_{k_i}(y)=a\), and hence \(h(y)=0\). Thus \(h=0\) on \(\partial C\). Since \(C\) is connected and \(h\not\equiv 1\) on \(C\), the strong maximum principle implies
\[
    0<h<1 \qquad \text{in }C.
\]
Hence \(h\) is admissible on \(C\), so \(C\) is \(p\)-massive. Thus \(\Omega\) is \(p\)-massive by Remark~\ref{rem_mon_mass}.
\end{proof}

\begin{theorem}\label{theo_lioup}
Let $G = (V, E)$ be an infinite, connected, and locally finite graph. Then $G$ admits a nonconstant bounded $p$-harmonic function (resp., with finite $p$-energy) if and only if there exist two disjoint $p$-massive (resp., $D_p$-massive) subsets of $V$.
\end{theorem}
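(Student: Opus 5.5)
We prove the two directions separately, treating the bounded case and the finite-energy case in parallel.

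\emph{Forward direction.} Let $h$ be a nonconstant bounded $p$-harmonic function on $V$. After an affine change we may assume $\inf_V h=0$ and $\sup_V h=1$; by the strong maximum principle, $0<h<1$ everywhere. Fix $a\in(0,1)$ with $a\notin h(V)$; this is possible since $V$ is countable. Set
\[
U_1:=\{h>a\},\qquad U_2:=\{h<a\}.
\]
These sets are disjoint and both nonempty. On $U_1$ the function $w:=(1-h)/(1-a)$ is $p$-harmonic, since the $p$-Laplacian commutes with the affine map $t\mapsto(1-t)/(1-a)$ up to a positive factor. Moreover $0<w<1$ on $U_1$. On $U_1^c$ we have $h<a$, so $w>1$ there. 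We must modify $w$ to get a function that equals $1$ on $U_1^c$.

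To do this, we solve exhaustion problems on $U_1\cap B_k$ with boundary value $1$ on $\partial U_1$ and boundary value $w$ on the remaining boundary points. The comparison principle (Lemma~\ref{cp}) squeezes the solutions $w_k$ between $w$ and $1$. Passing to a diagonal limit gives $\tilde w$, which is $p$-harmonic on $U_1$, equals $1$ on $U_1^c$, and satisfies $w\le\tilde w\le 1$. Since $h$ is not bounded by $a$ on $U_1$, the same component-and-strong-maximum-principle reasoning as in Theorem~\ref{theo_uniqp} shows that $0<\tilde w<1$ on at least one infinite component of $U_1$. Hence $U_1$ contains a $p$-massive set. By the same construction, $U_2$ contains a $p$-massive set, using $h/a$. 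The resulting massive sets are disjoint.

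\emph{Forward direction, energy case.} If $D_p(h)<\infty$, we build the comparison functions instead by energy minimization. We take the $p$-potential of $(U_1^c,\cdot)$ inside $U_1$, and we use that the truncation $\min\{w,1\}$ has energy at most $D_p(h)/(1-a)^p$. This shows that the limit obtained via Lemma~\ref{pot_app} has finite energy, so the massive sets are $D_p$-massive.

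\emph{Converse direction.} Let $\Omega_1,\Omega_2$ be disjoint $p$-massive sets with admissible functions $u_1,u_2$; thus $u_i=1$ on $\Omega_i^c$ and $0<u_i<1$ on $\Omega_i$. On $B_k$ we solve the Dirichlet problem with boundary data $0$ on $\partial B_k\cap\Omega_1$, $1$ on $\partial B_k\cap\Omega_2$, and $1/2$ elsewhere, obtaining $v_k$. The comparison principle on $B_k\cap\Omega_1$ gives $v_k\le u_1$ there, because $v_k\le 1=u_1$ on $\partial\Omega_1$. Likewise $v_k\ge 1-u_2$ on $B_k\cap\Omega_2$. A diagonal subsequence converges to a bounded $p$-harmonic $v$ on $V$ with $v\le u_1$ on $\Omega_1$ and $v\ge 1-u_2$ on $\Omega_2$.

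Nonconstancy is the main obstacle. We have $\inf_{\Omega_1}u_1<1$ and $\inf_{\Omega_2}u_2<1$, but this alone does not exclude $v\equiv c$. To rule it out, we normalize using massiveness. By the argument of Theorem~\ref{theo_uniqp}, we may replace $u_i$ by the maximal-type solution satisfying $\inf_{\Omega_i}u_i=0$. This is done by iterating: $u_i^{(n)}$, built from the data $u_i^{\,n}$-type comparisons, or more concretely by applying the construction of (1)$\Rightarrow$(2) to produce solutions with arbitrarily large supremum and rescaling. Then $v\le u_1$ forces $\inf v=0$ and $v\ge1-u_2$ forces $\sup v=1$. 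Since the infimum and supremum differ, $v$ is nonconstant.

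\emph{Converse direction, energy case.} We replace the Dirichlet solutions by energy minimizers of the condenser between $\Omega_1\cap\{u_1<\varepsilon\}$ and $\Omega_2\cap\{u_2<\varepsilon\}$. Their energies are bounded by $C_\varepsilon(D_p(u_1)+D_p(u_2))$. Fatou's lemma then passes the finite energy to the limit.
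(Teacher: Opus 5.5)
Your bounded-case argument can be repaired, but the finite-energy half of your converse has a genuine gap, and the forward direction has a comparison that runs the wrong way.

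\textbf{Converse, energy case.} The $p$-potential of the condenser between $K_1^\varepsilon=\Omega_1\cap\{u_1<\varepsilon\}$ and $K_2^\varepsilon=\Omega_2\cap\{u_2<\varepsilon\}$ is $p$-harmonic only off the plates. So you must let $\varepsilon\downarrow0$; the plates then leave every finite set because $u_i>0$ on $\Omega_i$, and Fatou gives finite energy for a subsequential limit. But you never show this limit is nonconstant, which is the whole content of the statement.

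In the bounded case, nonconstancy came from $v\le u_1$ on $\Omega_1$ and $v\ge 1-u_2$ on $\Omega_2$, obtained from Lemma~\ref{cp} on the \emph{finite} sets $B_k\cap\Omega_i$. For a condenser potential the comparison region $\Omega_1\setminus K_1^\varepsilon$ is infinite. The potential comes from a global minimization, or, via Lemma~\ref{pot_app}, from problems on induced subgraphs $B_n$, where $u_1$ is no longer $p$-harmonic near the edge of $B_n$. So no comparison principle is available. Moreover, $C_p(K_1^\varepsilon,K_2^\varepsilon)$ is nonincreasing as $\varepsilon\downarrow0$ and you give no lower bound, so the energies could a priori tend to $0$ and the limit be constant.

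The fix is to keep your Dirichlet problems on $B_k$, but with boundary data that have finite energy \emph{and} still respect the comparison. For example, take $g=\tfrac12(u_1+1-u_2)$, which equals $u_1/2$ on $\Omega_1$, $1-u_2/2$ on $\Omega_2$, and $\tfrac12$ elsewhere. The same comparison gives $v_k\le u_1$ on $\Omega_1\cap B_k$ and $v_k\ge 1-u_2$ on $\Omega_2\cap B_k$. The Dirichlet principle on the finite set $B_k$, with competitor $g$, bounds the energy by $D_p(g)\lesssim D_p(u_1)+D_p(u_2)$. Your $0,\tfrac12,1$ data cannot be used here, since its energy is uncontrolled. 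The paper does the one-sided analogue: data $u$ extended by $0$, solved on $B_k\cap(B_m\cup\Omega)$, with the energy bound coming from Green's formula.

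\textbf{Forward direction.} The datum $1$ on $\partial U_1$ is strictly smaller than $w=(1-h)/(1-a)>1$ there, and it equals $w$ on $U_1\setminus B_k$. So Lemma~\ref{cp} gives $0\le w_k\le w$, not $w\le w_k\le 1$. The inequality you wrote is false near $\partial U_1$. It would not help anyway: $\tilde w\equiv1$ satisfies it, and no maximum principle can exclude this when $\tilde w=1$ on $\partial U_1$.

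With the correct bound, $\tilde w\le w<1$ on $U_1$. Also $\tilde w>0$ on $U_1$ by the strong minimum principle, since each component of $U_1$ has nonempty vertex boundary contained in $\partial U_1$, where $\tilde w=1$. Hence $U_1$ itself is $p$-massive, with no passage to components. For finite energy, use the Dirichlet principle on $U_1\cap B_k$ with competitor $\min\{w,1\}$ rather than Lemma~\ref{pot_app}.

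\textbf{Normalization.} The normalization $\inf_{\Omega_i}u_i=0$ needs no appeal to Theorem~\ref{theo_uniqp}. Replace $u_i$ by $1-(1-u_i)/\sup_{\Omega_i}(1-u_i)$, using the $(p-1)$-homogeneity of $\Delta_p$ and the strong maximum principle. With these corrections, your bounded-case converse is valid. It solves one symmetric Dirichlet problem on each $B_k$, in place of the paper's monotone one-sided exhaustion.
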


\begin{proof}
Suppose $u$ is a nonconstant bounded $p$-harmonic function on $V$. Since $V$ is countable, we may choose
\[
    a \in (\inf_{x \in V} u(x), \sup_{x \in V} u(x)) \setminus u(V).
\]
Define the level sets
$$
\Omega_1 := \{x \in V : u(x) > a\}, \qquad
\Omega_2 := \{x \in V : u(x) < a\}.
$$
These sets are clearly disjoint and nonempty due to the choice of $a$ and the non-triviality of $u$.

Applying the construction used in the proof of Theorem~\ref{theo_uniqp} to the bounded \(p\)-harmonic function \(u\) on \(\Omega_1\) with boundary level \(a\), we conclude that \(\Omega_1\) is \(p\)-massive. Likewise, applying the same argument to \(-u\) on \(\Omega_2\), we obtain that \(\Omega_2\) is \(p\)-massive.

We now prove that if $D_p(u) < \infty$, then $\Omega_1$ and $\Omega_2$ are also $D_p$-massive. For this, keep the same notation as above: for each integer $k \geq 1$, let $\Omega_1^k := B_k \cap \Omega_1$, and define the boundary function $F_k \colon \partial \Omega_1^k \to \mathbb{R}$ by
$$
F_k(x) =
\begin{cases}
a, & \text{if } x \in \partial \Omega_1 \cap \partial \Omega_1^k, \\
u(x), & \text{if } x \in \partial \Omega_1^k \setminus \partial \Omega_1.
\end{cases}
$$

Consider the Dirichlet problem
$$
\begin{cases}
\Delta_p w = 0, & \text{in } \Omega_1^k, \\
w = F_k, & \text{on } \partial \Omega_1^k.
\end{cases}
$$
By Lemma \ref{duni}, this problem admits a unique solution, denoted by $v_k$. Applying the comparison principle, we have
$$
a \leq u(x) \leq v_k(x) \leq b := \sup_{y \in \Omega_1} u(y), \quad \text{for all } x \in \Omega_1^k.
$$

For convenience, define the sets
$\Gamma_1^k :=  \partial \Omega_1^k \cap \partial \Omega_1$,
$\Gamma_2^k := \partial \Omega_1^k \setminus \partial \Omega_1$,
and
$$E_k = \{xy \in E \mid x \in \Omega_1^k, y \in \overline{\Omega_1^k}\}.$$
Applying the discrete Green formula in Lemma \ref{gf} to $v_k$, we compute the
$p$-Dirichlet energy:
\begin{align*}
 D_k &:= \sum_{xy \in E_k} |\nabla_{xy} v_k|^p \mu_{xy} \\
 &= \sum_{x \in \Omega_1^k} \sum_{y \in \partial\Omega_1^k} |\nabla_{xy} v_k|^{p-2} (\nabla_{xy} v_k)^2 \mu_{xy} + \sum_{x \in \Omega_1^k} \sum_{y \in \Omega_1^k} |\nabla_{xy} v_k|^{p} \mu_{xy} \\
 &= \sum_{x \in \Omega_1^k} \sum_{y \in \partial\Omega_1^k} |\nabla_{xy} v_k|^{p-2} (\nabla_{xy} v_k) v_k(y) \mu_{xy} - \sum_{x \in \Omega_1^k} v_k(x) \Delta_p v_k(x) \mu(x) \\
 &= \sum_{x \in \Omega_1^k} \sum_{y \in \partial\Omega_1^k} |\nabla_{xy} v_k|^{p-2} (\nabla_{xy} v_k) v_k(y) \mu_{xy}.
\end{align*}

Using the definitions of $\Gamma_1^k$ and $\Gamma_2^k$, we split the sum:
\begin{align}\label{EE1}
D_k
&= a \sum_{x \in \Omega_1^k} \sum_{y \in \Gamma_1^k} |\nabla_{xy} v_k|^{p-2} (\nabla_{xy} v_k) \mu_{xy} 
+ \sum_{x \in \Omega_1^k} \sum_{y \in \Gamma_2^k} |\nabla_{xy} v_k|^{p-2} (\nabla_{xy} v_k) u(y) \mu_{xy}.
\end{align}

On the other hand, since $v_k$ is $p$-harmonic on $\Omega_1^k$, we have
$$
\sum_{x \in \Omega_1^k} \Delta_p v_k(x) \mu(x) = 0.
$$
From the definition of $\Delta_p$, 
\begin{align*}
0 &= \sum_{x \in \Omega_1^k} \sum_{y \sim x} \mu_{xy} |\nabla_{xy} v_k|^{p-2} \nabla_{xy} v_k \\
&= \sum_{x,y \in \Omega_1^k} \mu_{xy} |\nabla_{xy} v_k|^{p-2} \nabla_{xy} v_k + \sum_{x \in  \Omega_1^k}\sum_{y \in \partial \Omega_1^k} \mu_{xy} |\nabla_{xy} v_k|^{p-2} \nabla_{xy} v_k.
\end{align*}
Since the first term is antisymmetric in $x$ and $y$, it vanishes, and we obtain
$$
\sum_{x \in \Omega_1^k} \sum_{y \in \Gamma_1^k} \mu_{xy} |\nabla_{xy} v_k|^{p-2} \nabla_{xy} v_k = - \sum_{x \in \Omega_1^k} \sum_{y \in \Gamma_2^k} \mu_{xy} |\nabla_{xy} v_k|^{p-2} \nabla_{xy} v_k.
$$

Substituting this into equation \eqref{EE1}, we get
$$
D_k = \sum_{x \in \Omega_1^k} \sum_{y \in \Gamma_2^k} |\nabla_{xy} v_k|^{p-2} (\nabla_{xy} v_k)(u(y) - a) \mu_{xy}.
$$

Now consider the Dirichlet energy of $u$. Using a similar computation, we obtain
\begin{align*}
D_p(u)
&\geq \sum_{xy \in E_k} |\nabla_{xy} u|^p \mu_{xy} \\
&= \sum_{x \in \Omega_1^k} \sum_{y \in \Gamma_1^k} |\nabla_{xy} u|^{p-2} (\nabla_{xy} u) u(y) \mu_{xy}
+  \sum_{x \in \Omega_1^k} \sum_{y \in \Gamma_2^k} |\nabla_{xy} u|^{p-2} (\nabla_{xy} u) u(y) \mu_{xy} \\
&\geq  \sum_{x \in \Omega_1^k} \sum_{y \in \Gamma_2^k} |\nabla_{xy} u|^{p-2} (\nabla_{xy} u) (u(y) - a) \mu_{xy},
\end{align*}
where we used the fact that $u(y) \leq a$ and $u(x) > a$ for all $x \in \Omega_1$ and $y \in \partial \Omega_1$ such that $\mu_{xy} > 0$.

Since $u \leq v_k$ on $\partial \Omega_1^k \cap \partial \Omega_1$ and $v_k = u$ on $\partial \Omega_1^k \setminus \partial \Omega_1$, we have $v_k \geq u$ on $\Omega_1^k$, which implies that
\[
|\nabla_{xy} v_k|^{p-2} (\nabla_{xy} v_k) \mu_{xy} \leq |\nabla_{xy} u|^{p-2} (\nabla_{xy} u) \mu_{xy}, \quad \text{for all } x \in \Omega_1^k \text{ and } y \in \Gamma_2^k.
\]

Therefore,
\begin{align*}
D_p(u) 
&\geq \sum_{x \in \Omega_1^k} \sum_{y \in \Gamma_2^k} |\nabla_{xy} u|^{p-2} (\nabla_{xy} u) (u(y) - a) \mu_{xy} \\
&\geq \sum_{x \in \Omega_1^k} \sum_{y \in \Gamma_2^k} |\nabla_{xy} v_k|^{p-2} (\nabla_{xy} v_k) (u(y) - a) \mu_{xy} = D_k.
\end{align*}

Finally, by Fatou's lemma, we obtain
$$
D_p(v) \leq \liminf_{i \to \infty} \frac{D_{k_i}}{(b-a)^p} \leq \frac{D_p(u)}{(b-a)^p} < \infty,
$$
which shows that $v$ has finite $p$-energy. Therefore, $\Omega_1$ is $D_p$-massive. The same argument applies to $\Omega_2$.
 
Conversely, assume that $\Omega$ and $W$ are two disjoint $p$-massive subsets of $V$, with corresponding admissible $p$-harmonic functions $u$ and $w$, respectively. Let $\{B_k\}$ be an exhaustion of $V$. Define
\[
\Omega_m := B_m \cup \Omega, \quad \text{and} \quad \Omega_m^k := B_k \cap \Omega_m.
\]

As before, define the boundary function
\[
F(x) :=
\begin{cases}
u(x), & \text{if } x \in \Omega, \\
0, & \text{if } x \in V \setminus \Omega.
\end{cases}
\]
We consider the Dirichlet problem
\[
\begin{cases}
\Delta_p w = 0, & \text{in } \Omega_m^k, \\
w = F, & \text{on } \partial \Omega_m^k,
\end{cases}
\]
and let $v_m^k$ denote its unique solution.

By the comparison principle on the region $\Omega_m^k$, we have $0 \leq v_m^k \leq 1$ in $\Omega_m^k$. Since $v_m^k \geq u$ on $\partial (\Omega_m^k \cap \Omega)$, it follows that $v_m^k \geq u$ on $\Omega_m^k \cap \Omega$. Furthermore, applying the comparison principle again on $\Omega_m^{k-1}$, we obtain the monotonicity $v_m^{k-1} \leq v_m^k$. We then define the limit
\[
v_m := \lim_{k \to \infty} v_m^k.
\]

Clearly, $0 \leq v_m \leq 1$ on $\Omega_m$, and $v_m \geq u$ on $\Omega$. Moreover, $v_m$ satisfies
\[
\begin{cases}
\Delta_p v_m = 0, & \text{in } \Omega_m, \\
v_m = F, & \text{on } \partial \Omega_m.
\end{cases}
\]

Assume $m$ is large enough so that $B_m \cap W \neq \emptyset$. Then, applying the comparison principle to the region $B_m \cap W$, and noting that $F = 0$ on $W$, we obtain $v_m \leq 1 - w$. We can then define the limit
\[
v := \lim_{m \to \infty} v_m.
\]
It follows that $v$ is a bounded $p$-harmonic function on $V$ with $v \geq u$ on $\Omega$ and $v \leq 1 - w$ on $W$, meaning $v$ is not a constant function.

It remains to prove that if $D_p(u) < \infty$ and $D_p(w) < \infty$, then
$D_p(v) < \infty$. Applying the discrete Green formula in Lemma \ref{gf} as above, we obtain
\begin{align*}
D_p(v_m^k)
&= \sum_{x \in \Omega_m^k} \sum_{ y \in \partial \Omega_m^k \cap \Omega} |\nabla_{xy} v_m^k|^{p-2} (\nabla_{xy} v_m^k) u(y) \mu_{xy} \\
&\leq \sum_{x \in \Omega_m^k} \sum_{ y \in \partial \Omega_m^k \cap \Omega} |\nabla_{xy} u|^{p-2} (\nabla_{xy} u) u(y) \mu_{xy} \leq D_p(u) < \infty,
\end{align*}
which implies that $D_p(v) \leq D_p(u) < \infty$ by Fatou's lemma, completing the proof.
\end{proof}

\begin{remark}\label{rem_mon_mass}\rm
Note that the proof of Theorem \ref{theo_lioup} establishes the following property: if $\Omega_1 \subset \Omega_2 \subset V$, then the $p$-massiveness (resp., $D_p$-massiveness) of $\Omega_1$ implies the $p$-massiveness (resp., $D_p$-massiveness) of $\Omega_2$.
\end{remark}
   
\section{Criterion for \texorpdfstring{$p$}{p}-Massive Sets}\label{cr_pm}

This section is devoted to the proof of the Wiener criterion at infinity for
$p$-massive sets. We first reformulate massiveness in terms of $p$-thickness
and then derive the capacitary criterion from this point of view.

\begin{definition}
    We say that a set $A \subset V$ is $p$-thick at a point $x_0 \in V$ if, for any positive $p$-superharmonic function $u$ on $V$, the condition $u \ge 1$ on $A$ implies that $u(x_0) \ge 1$.
\end{definition}

\begin{proposition}
The following statements are equivalent:
\begin{enumerate}
    \item $\Omega$ is a $p$-massive set.
    \item There exists some $x_0 \in \Omega$ such that $\Omega^c$ (equivalently, $\partial \Omega$) is not $p$-thick at $x_0$.
    \item If $\Omega$ is connected, then $\Omega^c$ (equivalently, $\partial \Omega$) is not $p$-thick at any point in $\Omega$.
\end{enumerate}
\end{proposition}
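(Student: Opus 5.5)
The plan is to prove (1)$\Rightarrow$(2), then (2)$\Rightarrow$(1), then (1)$\Leftrightarrow$(3) for connected $\Omega$, working throughout with a single extremal object. Let $\Omega_k:=B_k\cap\Omega$ for an exhaustion $\{B_k\}$. Let $w_k$ solve $\Delta_p w_k=0$ in $\Omega_k$, with $w_k=1$ on $\partial\Omega_k\cap\Omega^c$ and $w_k=0$ on $\partial\Omega_k\setminus\Omega^c$; such $w_k$ exist and are unique by Lemma \ref{duni}, and we extend $w_k$ by $1$ on $\Omega^c$ and by $0$ outside $B_k$. The comparison principle (Lemma \ref{cp}) shows that $w_k$ is nondecreasing in $k$ and satisfies $0\le w_k\le 1$. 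Hence the limit $w=\lim_k w_k$ exists, $w=1$ on $\Omega^c$, $w$ is $p$-harmonic in $\Omega$, and $w$ is $p$-superharmonic on $V$ (at points of $\Omega^c$ it is maximal, so $\Delta_p w\le 0$ there). The key observation is that $w$ is the smallest positive $p$-superharmonic function on $V$ that is $\ge 1$ on $\Omega^c$. Indeed, if $u>0$ is $p$-superharmonic on $V$ with $u\ge1$ on $\Omega^c$, then $u\ge w_k$ on $\partial\Omega_k$, so $u\ge w_k$ in $\Omega_k$ by comparison, and therefore $u\ge w$.

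For (2)$\Rightarrow$(1), suppose $\Omega^c$ is not $p$-thick at $x_0\in\Omega$. Then some positive $p$-superharmonic $u\ge1$ on $\Omega^c$ has $u(x_0)<1$, and minimality gives $w(x_0)<1$. Now consider $\Omega_0$, the connected component of $\{x\in\Omega: w(x)<1\}$ containing $x_0$. The strong maximum principle for the $p$-harmonic function $w$ on $\Omega_0$ (with $w\le1$) gives $w<1$ on all of $\Omega_0$. Also $w>0$ on $\Omega_0$: if $w$ vanished at some point, it would vanish on a connected piece whose neighbours all have $w=0$, which can be ruled out by connectivity of $V$ together with $w=1$ on $\Omega^c$ (alternatively, replace $w$ by $\max(w,\varepsilon)$-type modifications). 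The boundary of $\Omega_0$ lies in $\{w=1\}$, so $w$ is admissible for $\Omega_0$ up to the affine change $1-w$. Hence $\Omega_0$ is $p$-massive, and $\Omega$ is $p$-massive by Remark \ref{rem_mon_mass}.

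For (1)$\Rightarrow$(2) and (3), let $h$ be the function from the definition of $p$-massiveness: $0<h<1$ in $\Omega$, $h=1$ on $\Omega^c$, and $\Delta_p h=0$ in $\Omega$. Set $u:=\min(h,1)=h$, regarded on $V$. Then $u$ is positive and $p$-superharmonic on $V$, because at points of $\Omega^c$ one has $u=1\ge u(y)$ for all neighbours $y$. So $u$ is a valid test function with $u\ge1$ on $\Omega^c$ and $u(x)<1$ for every $x\in\Omega$. Thus $\Omega^c$ is not $p$-thick at any point of $\Omega$, which gives both (2) and (3), and (3) trivially implies (2). Connectedness of $\Omega$ is what one would need if $h$ were only known on one component; I will carry this through by applying the minimality argument componentwise, using $w(x_0)<1$ and the strong maximum principle to spread $w<1$ across the connected set $\Omega$.

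For the parenthetical claims about $\partial\Omega$: a $p$-superharmonic function that is $\ge1$ on $\partial\Omega$ can be replaced by $\min(u,1)$ on $\overline{\Omega}$ and set equal to $1$ on $\Omega^c\setminus\partial\Omega$. This remains $p$-superharmonic, since the Laplacian at points of $\Omega$ depends only on values in $\overline\Omega$, and at points outside $\Omega$ the value $1$ is a local maximum. So thickness of $\Omega^c$ and of $\partial\Omega$ at points of $\Omega$ coincide. The main obstacle I expect is the positivity $w>0$ in step (2)$\Rightarrow$(1), together with checking that the truncation $\min(u,1)$ preserves $p$-superharmonicity. The latter follows because a minimum of $p$-superharmonic functions is $p$-superharmonic, by monotonicity of $t\mapsto|t|^{p-2}t$.
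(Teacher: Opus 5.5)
Your proposal is correct and is essentially the paper's argument. It uses the same exhaustion by Dirichlet problems on $B_k\cap\Omega$ with data $1$ on $\partial\Omega$ and $0$ on the outer boundary, the same comparison with the non-thickness witness to get $w\le u$, and the same strong maximum principle to propagate $w<1$; the small differences are that you read (1)$\Rightarrow$(3) off the massive function directly, correctly note that $w_k$ increases in $k$, and supply the positivity of the limit and the $\Omega^c$ versus $\partial\Omega$ equivalence, which the paper leaves implicit.

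The one caveat is inherited from the paper and concerns your passage from $\Omega_0$ to $\Omega$ via Remark~\ref{rem_mon_mass}. For connected $\Omega$ you have $\Omega_0=\Omega$ and the remark is not needed, which is the only case the paper's proof treats. For disconnected $\Omega$, a finite component forces any function as in the definition of $p$-massiveness to equal $1$ there, so that step, and the statement itself, only hold under a looser notion of massiveness.
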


\begin{proof}
    $(1) \Rightarrow (2)$: This implication follows immediately from the definitions.
    
    $(2) \Rightarrow (3)$: Assume there exists a positive $p$-superharmonic function $u$ on $V$ such that $u \ge 1$ on $\Omega^c$ and $u(x_0) < 1$ for some $x_0 \in \Omega$.
    
    Similar to the argument in Section \ref{theos}, for each integer $k \geq 1$, define $\Omega_k = B_k \cap \Omega$, where $B_k$ is the ball of radius $k$ centered at a fixed vertex. Define the boundary function $F_k \colon \partial \Omega_k \to \mathbb{R}$ by
    $$
    F_k(x) =
    \begin{cases}
    1, & \text{if } x \in \partial \Omega \cap \partial \Omega_k, \\
    0, & \text{if } x \in \partial \Omega_k \setminus \partial \Omega.
    \end{cases}
    $$
    Since $u \ge 1$ on $\Omega^c$ and $u > 0$, we clearly have $F_k \le u$ on $\partial \Omega_k$. 
    
    Solving the Dirichlet problem
    $$
    \begin{cases}
    \Delta_p w = 0, & \text{in } \Omega_k, \\
    w = F_k, & \text{on } \partial \Omega_k,
    \end{cases}
    $$
    we obtain a unique solution $v_k$. By the comparison principle, the sequence $\{v_k\}$ is monotonically decreasing and satisfies $0 \leq v_k \leq 1$ as well as $v_k \le u$ in $\Omega_k$. 
    
    Define the pointwise limit
    $$
    v := \lim_{k \to \infty} v_k.
    $$
    Then $v$ is a $p$-harmonic function on $\Omega$ satisfying $0 \le v \le 1$ and $v \le u$. 
    
    Suppose, for the sake of contradiction, that there exists some $x_1 \in \Omega$ such that $v(x_1) = 1$. Since $v \le 1$ everywhere and $\Delta_p v(x_1) = 0$, the definition of the discrete $p$-Laplacian implies that $v(y) = 1$ for all neighbors $y \sim x_1$. Since $\Omega$ is connected, repeating this argument along paths in $\Omega$ yields $v \equiv 1$ on $\Omega$. In particular, this implies $v(x_0) = 1$, which contradicts the fact that $v(x_0) \le u(x_0) < 1$. Thus, we must have $v(x) < 1$ for all $x \in \Omega$, which implies that $\Omega^c$ is not $p$-thick at any point in $\Omega$.
    
    $(3) \Rightarrow (1)$: This implication follows from above construction directly.
\end{proof}

\begin{definition}\label{bc}
We say that $(V, \mu)$ satisfies the ball covering property \hyperref[bc]{$\text{(BC)}$} if, for every $\varepsilon > 0$, any ball $B(x, R)$ can be covered by $N = N(\varepsilon)$ balls of radius $\varepsilon R$.
\end{definition}
It is well known that \hyperref[VD]{$\text{(VD)}$} implies \hyperref[bc]{$\text{(BC)}$}.

\begin{definition}\rm
We say that the weighted graph $(V,\mu)$ satisfies the elliptic Harnack inequality \hyperref[hi]{$(H_p)$} for $p$-harmonic functions if there exist constants $t>1$ and $C_h>0$ such that for all $x \in V$, $R>0$, and every nonnegative $p$-harmonic function $u$ on $B(x,tR)$, the following estimate holds:
\begin{equation}\label{hi}
    \max_{B(x,R)} u \le C_h \min_{B(x,R)} u.
    \tag{$H_p$}
\end{equation}
\end{definition}

\begin{proposition}
    Assume that $(V,\mu)$ satisfies the condition \hyperref[p0]{$(p_0)$}. Then the volume doubling condition \hyperref[VD]{$\text{(VD)}$} and the Poincar\'e inequality \hyperref[PI]{$(P_p)$} together imply the elliptic Harnack inequality \hyperref[hi]{$(H_p)$}.
\end{proposition}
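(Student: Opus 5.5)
This is the discrete analogue of the Moser iteration route of Holopainen and Saloff-Coste to the $p$-Harmonic Harnack inequality. Under \hyperref[p0]{$(p_0)$} the graph has bounded degree and comparable neighbouring weights: $\mu_{xy}\ge \mu(x)/p_0$, hence $\mu(x)\le p_0\mu_{xy}\le p_0\mu(y)$ for $x\sim y$, and $\deg x\le p_0$. This lets us pass freely between vertex sums over $B$ and edge sums over $B$ or $2B$, and between balls of radius $r$ and $r+1$. The plan has three steps: derive a Sobolev inequality, run Moser iteration on sub/supersolutions, and bridge the two with a John–Nirenberg (crossover) lemma.

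\textbf{Step 1 (Sobolev).} First I would upgrade \hyperref[VD]{(VD)} together with \hyperref[PI]{$(P_p)$} to a local Sobolev inequality. The mechanism is the standard Hajłasz–Koskela argument. \hyperref[VD]{(VD)} gives a reverse-doubling exponent: on an infinite connected graph with \hyperref[p0]{$(p_0)$}, $\mu(B(x,R))/\mu(B(x,r))\ge c(R/r)^{\nu}$ for some $\nu>0$, and this also follows from the doubling exponent. Using it, the weak $(1,p)$-Poincaré inequality self-improves to a $(\kappa p,p)$ Sobolev–Poincaré inequality with some $\kappa>1$:
\[
\Big(\frac1{\mu(B)}\sum_{B}|f|^{\kappa p}\mu\Big)^{1/\kappa p}\le C r\Big(\frac1{\mu(B)}\sum_{x,y\in 2B}|\nabla_{xy}f|^p\mu_{xy}\Big)^{1/p}
\]
for $f$ supported in $B$. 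The enlargement from $2B$ back to $B$ is handled by the chaining argument, which uses \hyperref[bc]{(BC)} and the fact that graph balls are length spaces.

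\textbf{Step 2 (Caccioppoli and Moser).} Next I would prove discrete Caccioppoli inequalities for powers $u^{\beta}$ of a positive $p$-sub- or supersolution, for every $\beta\neq 0$ and $\beta\ne (p-1)/p$-type critical values. Test $\Delta_p u$ against $\eta^p u^{\beta}$ with a Lipschitz cutoff $\eta$, and use Green's formula (Lemma \ref{gf}). The discrete difficulty is that the chain rule is not exact. I would use the elementary inequality
\[
|a-b|^{p-2}(a-b)(\eta_a^p a^\beta-\eta_b^p b^\beta)\ge c|a^{\gamma}-b^{\gamma}|^p\min(\eta_a,\eta_b)^p - C|\eta_a-\eta_b|^p\max(a,b)^{\gamma p},
\]
with $\gamma=(\beta+p-1)/p$. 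To replace $\max(a,b)$ and $\min(\eta_a,\eta_b)$ by vertex values, I would use \hyperref[p0]{$(p_0)$}. The key point is that $p$-harmonicity together with \hyperref[p0]{$(p_0)$} forces $u(y)\le C u(x)$ for neighbours: $\mu_{xy}|u(y)-u(x)|^{p-1}\le \sum_{z\sim x}\mu_{xz}|u(z)-u(x)|^{p-1}$, the positive and negative fluxes balance, and the negative ones are at most $u(x)^{p-1}\mu(x)$. This local Harnack at scale one is exactly where \hyperref[p0]{$(p_0)$} is needed. Combining Caccioppoli with Step 1 and iterating over shrinking balls $B(x,r_k)$, $r_k\downarrow R$, gives two estimates:
\[
\max_{B(x,R)}u\le C\Big(\frac{1}{\mu(B)}\sum_{B(x,2R)}u^{s}\mu\Big)^{1/s},\qquad
\Big(\frac{1}{\mu(B)}\sum u^{-s}\mu\Big)^{-1/s}\le C\min_{B(x,R)}u,
\]
and also a weak Harnack for small positive powers $s<s_0$. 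Radii below a fixed constant are trivial by the scale-one Harnack.

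\textbf{Step 3 (Crossover).} To link $u^{s}$ and $u^{-s}$ averages, I would take $\beta=1-p$ (the logarithmic case) in the Caccioppoli test. This gives $\sum_{B}|\nabla\log u|^p\mu_{xy}\le C\mu(B)/r^p$. With \hyperref[PI]{$(P_p)$} this shows $\log u\in BMO$ on $B(x,tR/2)$. John–Nirenberg on doubling spaces, or Bombieri's lemma to avoid it, then gives $\big(\sum u^{s}\big)\big(\sum u^{-s}\big)\le C\mu(B)^2$ for small $s$. Chaining the three estimates yields \hyperref[hi]{$(H_p)$} with some $t>1$, say $t=8$.

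\textbf{Main obstacle.} I expect the main difficulty to be the discrete Caccioppoli/chain-rule step for negative and fractional exponents. Controlling neighbour ratios via \hyperref[p0]{$(p_0)$} is delicate for supersolutions, where only $\Delta_p u\le 0$ holds. There I would first try Bombieri's lemma, fed by the logarithmic estimate, which avoids needing sharp constants in the power-test-function inequalities.
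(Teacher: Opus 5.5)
\textbf{Overall route.} Your architecture is the Holopainen--Soardi Moser-iteration argument that the paper simply cites, with $(p_0)$ taking the place of bounded degree. It consists of a Sobolev inequality from (VD) and $(P_p)$, Caccioppoli estimates and Moser iteration for sub- and supersolutions, and a logarithmic estimate with John--Nirenberg or Bombieri for the crossover.

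\textbf{The step that fails.} The step you single out as the key discrete point would break the iteration as you describe it. You propose to use the neighbour comparability $u(y)\le C_0u(x)$, with $C_0=1+p_0^{1/(p-1)}$, to replace $\max(a,b)$ by vertex values in the Caccioppoli inequality that feeds the iteration. At exponent $\gamma$ this costs $C_0^{\gamma p}$, which is a fixed factor $C_0$ per step after taking $(\gamma p)$-th roots. Moser iteration can only afford constants whose $(\gamma p)$-th roots tend to $1$, i.e.\ polynomial dependence on $\gamma$.

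On a graph this is not cosmetic. Each Caccioppoli step reaches one edge beyond the support of the cutoff, and balls only change at integer radii. So the iteration must stop after about $\log_2 R$ steps and be closed by $u(y)\le(\mu(B)/\mu(y))^{1/q}\big(\frac{1}{\mu(B)}\sum_B u^q\mu\big)^{1/q}$ together with $\mu(B)/\mu(y)\lesssim R^{D}$. Your comparability therefore contributes $C_0^{\log_2 R}=R^{\log_2 C_0}$, and the Harnack constant is no longer uniform in $R$. The same happens in the negative-power iteration for the infimum.

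\textbf{The fix.} No comparability is needed in the Caccioppoli step at all.
\begin{itemize}
\item The chain-rule main term is exact up to constants by the two-point H\"older inequality $|a^\gamma-b^\gamma|^p\le\frac{|\gamma|^p}{|\beta|}|a-b|^{p-1}|a^\beta-b^\beta|$, the analogue of \eqref{lemc-edge2}.
\item The cross term is absorbed by Young's inequality with the weight $|\beta|\min\{a^{\beta-1},b^{\beta-1}\}\le|a^\beta-b^\beta|/|a-b|$. This leaves only $|\nabla_{xy}\eta|^p(a^{\gamma p}+b^{\gamma p})$.
\item The cutoff enters as $\min(\eta_x,\eta_y)$ automatically, through $|\nabla_{xy}(\eta v)|\le\min(\eta_x,\eta_y)|\nabla_{xy}v|+\max(|v(x)|,|v(y)|)|\nabla_{xy}\eta|$. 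In any case $(p_0)$ says nothing about $\eta$.
\item Edge terms are converted to vertex sums by $\max(a,b)^q\le a^q+b^q$ and $\sum_y\mu_{xy}=\mu(x)$, exactly as in the proof of Lemma~\ref{lemc}. This costs a factor $2$, whose $q$-th root tends to $1$.
\end{itemize}
Keep the neighbour comparability for places where it is used a bounded number of times. Examples are the Harnack inequality at radii below a fixed constant, and the bound $\mu(B(y,1))\le(1+p_0)\mu(y)$ behind $\mu(B)/\mu(y)\lesssim R^{D}$.

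\textbf{Supersolutions.} Your worry here is misplaced. The flux argument uses only $\Delta_p u(x)\le 0$ and $u>0$, so it holds for positive supersolutions. It is subsolutions for which it fails, and that is harmless here since $u$ is $p$-harmonic.
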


\begin{proof}
    Holopainen and Soardi \cite{HS97liou} established this result for simple graphs with uniformly bounded degrees. Their proof can be directly generalized to weighted graphs satisfying the \hyperref[p0]{$(p_0)$} condition.
\end{proof}

\begin{definition}\rm
We say that the weighted graph $(V,\mu)$ satisfies the Sobolev inequality \hyperref[SI]{$\text{(SI)}$} if there exist constants $\kappa>1$ and $C > 0$ such that for every ball $B=B(x,r)$ and all functions $u$ vanishing outside $B$, 
\begin{equation}
     \left(\frac{1}{\mu(B)} \sum_{x\in B} |u(x)|^{\kappa p} \mu(x)\right)^{\frac{1}{\kappa p}} \le C r \left(\frac{1}{\mu(B)} \sum_{x,y\in B}  |\nabla_{xy} u|^p\mu_{xy} \right)^{\frac{1}{p}}.
     \tag{$\text{SI}$}\label{SI}
\end{equation}
\end{definition}

It is well known that the volume doubling condition \hyperref[VD]{$\text{(VD)}$} and the Poincar\'e inequality \hyperref[PI]{$(P_p)$} together imply the Sobolev inequality \hyperref[SI]{$\text{(SI)}$}. This fundamental implication was established by Haj\l asz and Koskela \cite{HK95} in the general setting of metric measure spaces; see also \cite{HS97liou} for its counterpart on graphs.

\begin{theorem}[Wiener's criterion at $\infty$]\label{wcm}
Assume that $(V,\mu)$ satisfies \hyperref[VD]{$\text{(VD)}$} and \hyperref[PI]{$(P_p)$}. For an infinite set $A \subset V$, if $A$ is $p$-thick at $x_0$, then
\begin{equation}\label{wccap}
    \sum_{n=1}^{\infty} \left( \frac{\operatorname{cap}_{p}(A_n, B_{n+1})}{\operatorname{cap}_{p}(B_n, B_{n+1})} \right)^{\frac{1}{p-1}} = \infty,
\end{equation}
where $B_n = B(x_0, r_n)$ with $r_n = 2^n$, and $A_n = A \cap B_n$.

Moreover, if $(V,\mu)$ also satisfies \hyperref[p0]{$(\text{$p_0$})$}, then the converse also holds.
\end{theorem}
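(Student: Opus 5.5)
We prove the contrapositive of the first part by building a barrier, and the converse by an iteration that uses the Harnack inequality. Throughout, write $\gamma_n := \bigl(\operatorname{cap}_p(A_n,B_{n+1})/\operatorname{cap}_p(B_n,B_{n+1})\bigr)^{1/(p-1)}$. By Lemma \ref{cap_ballupb} and Lemma \ref{cap_balllob} (applied with $r=2^n$, $R=2^{n+1}$ after the usual adjustment through an intermediate radius and \hyperref[VD]{(VD)}), the denominator is comparable to $\mu(B_{n+1})/r_n^p$. Hence $\gamma_n \le C$ uniformly in $n$.

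\emph{Thickness implies divergence.} Assume $\sum_n \gamma_n<\infty$. We construct a positive $p$-superharmonic $u$ on $V$ with $u\ge 1$ on $A$ and $u(x_0)<1$, which shows that $A$ is not $p$-thick at $x_0$.

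Let $u_n$ be the $p$-potential of $(A_n\setminus A_{n-1}, B_{n+1})$, extended by $0$. Each $u_n$ is $p$-superharmonic on $V$: it is $p$-harmonic off its pole set, it attains its maximum $1$ there, and it is superharmonic at points outside $B_{n+1}$ because it is $\ge 0$ and vanishes there.

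The key estimate is the bound
\begin{equation*}
u_n(x_0)\le C\left(\frac{\operatorname{cap}_p(A_n,B_{n+1})}{\operatorname{cap}_p(B_{n},B_{n+1})}\right)^{1/(p-1)}\le C\gamma_n .
\end{equation*}
We obtain it by comparing $u_n$ with a level set. Set $\lambda:=\min_{B_{n-1}}u_n$ (or use $u_n(x_0)$ together with a weak Harnack / $\sup$ estimate derived from \hyperref[SI]{(SI)} by Moser iteration). Then $\Gamma_\lambda \supset B_{n-1}$, so Lemma \ref{cap_lvs} and monotonicity give
\begin{equation*}
\lambda^{p-1}\operatorname{cap}_p(B_{n-1},B_{n+1})\le \operatorname{cap}_p(A_n,B_{n+1}).
\end{equation*}
The ratio $\operatorname{cap}_p(B_{n-1},B_{n+1})/\operatorname{cap}_p(B_n,B_{n+1})$ is bounded below by \hyperref[VD]{(VD)} and the two capacity lemmas. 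To control $u_n(x_0)$ by $\lambda$ without \hyperref[hi]{$(H_p)$}, we bound the infimum from below by an average: a weak Harnack inequality for nonnegative superharmonic functions is available from \hyperref[SI]{(SI)}. I expect this comparison to be the main obstacle, because the first direction does not assume \hyperref[p0]{$(p_0)$}.

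Since a sum of $p$-superharmonic functions is not superharmonic for $p\ne2$, we do not add the $u_n$. Instead we take $u:=\sup_n u_n$, or more safely $u:=\min\{1,\,\sum_n u_n\}$ replaced by the $p$-superharmonic envelope. The cleanest route is to use the nonlinear "sum is almost superharmonic" lemma in its discrete Kilpel\"ainen–Mal\'y form. An alternative is to start from a finite $N$ and take the potential $w_N$ of $(A\setminus B_N, \cdot)$, which is bounded by a telescoped Wiener tail $\sum_{n>N}C\gamma_n$. Choose $N$ so large that this tail is less than $1/2$. The finite part $A\cap B_N$ is handled by the potential of $(A\setminus B_N)\cup$ (finite set), which is still $<1$ at some point of $\Omega$ and hence at $x_0$ by connectedness. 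This yields $\min\{1,w\}$ as the required superharmonic function.

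\emph{Divergence implies thickness, under \hyperref[p0]{$(p_0)$}.} Let $u>0$ be $p$-superharmonic with $u\ge1$ on $A$, and set $m_n:=\inf_{B_n}\min(u,1)$. Let $v_n$ be the potential of $(A_n,B_{n+1})$. The comparison principle on $B_{n+1}\setminus A_n$ gives $1-\min(u,1)\le (1-m_{n+1})(1-v_n)$ relative to its boundary values. The Harnack inequality \hyperref[hi]{$(H_p)$}, available under \hyperref[p0]{$(p_0)$}, together with Lemma \ref{cap_lvs} and Lemma \ref{cap_pt}, gives $\inf_{B_{n-1}} v_n\ge c\gamma_n$. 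Therefore
\begin{equation*}
1-m_{n-1}\le(1-c\gamma_n)(1-m_{n+1}),
\end{equation*}
and iterating over even and odd $n$,
\begin{equation*}
1-u(x_0)\le\prod_n(1-c\gamma_n)=0 .
\end{equation*}
Hence $u(x_0)\ge1$, i.e. $A$ is $p$-thick at $x_0$.
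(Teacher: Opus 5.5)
\textbf{The gap is in the first direction (convergence $\Rightarrow$ not thick).} Your plan is the linear, $p=2$, argument: estimate the potentials $u_n$ of the annular pieces $A_n\setminus A_{n-1}$ one at a time, then add them up. For $p\neq 2$ this breaks down, because there is no superposition.
\begin{itemize}
\item[(i)] $\sum_n u_n$ is not $p$-superharmonic.
\item[(ii)] $\sup_n u_n$ is not $p$-superharmonic either; only minima of supersolutions are.
\item[(iii)] You therefore cannot dominate the potential $w_N$ of $A\setminus B_N$ by $\sum_{n>N}u_n$ via the comparison principle.
\end{itemize}
Your fallback claim, $w_N(x_0)\le\sum_{n>N}C\gamma_n$, is exactly what has to be proved, and nothing in the proposal produces it. This is the obstruction Kilpel\"ainen--Mal\'y overcame.

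The paper's route is Lemma \ref{lemc} and Lemma \ref{suph_upb}. Together they give a pointwise Wolff-potential bound for a nonnegative supersolution, derived from (SI) alone:
\[
v(x_0)\le C\Big(\frac{1}{\mu(B)}\sum_{x\in B}v(x)^\gamma\mu(x)\Big)^{1/\gamma}+C\sum_i\Big(\frac{r_i^p\sigma(B_i)}{\mu(B_i)}\Big)^{1/(p-1)} .
\]
It is applied directly to the potential $v_n$ of the whole set $A'_n=(A\setminus B_{K-1})\cap B_n$. The Riesz masses $\sigma_n(B_i)$ are controlled by capacities through Lemma \ref{cap_pt}. The average term is killed by $(P_p)$, since it is $\lesssim (r_n^p\operatorname{cap}_p(A'_n,B_{n+1})/\mu(B_n))^{1/p}\to 0$. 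The nonlinearity is absorbed by the additivity of the Wolff sum over dyadic scales, so no summation of potentials is needed. No Harnack inequality is needed either, which is why $(p_0)$ is not assumed in this direction.

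Your per-piece estimate $u_n(x_0)\le C\gamma_n$ has a separate problem. Lemma \ref{cap_lvs} only bounds $\min u_n$ on a ball. Passing from that minimum to $u_n(x_0)$ requires a local sup bound together with a weak Harnack inequality, which is essentially $(H_p)$. The paper has $(H_p)$ only under $(p_0)$, and the weak Harnack inequality by itself does not give an upper bound at $x_0$.

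Also, $u_n$ is not $p$-superharmonic on $V$. At $x\in\partial B_{n+1}$ it vanishes while some neighbour has $u_n>0$, so $\Delta_pu_n(x)>0$. A globally superharmonic barrier only appears after taking the increasing limit in $n$, as the paper does.

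\textbf{The converse direction (divergence $\Rightarrow$ thickness under $(p_0)$)} is essentially the paper's proof. The ingredients match: potentials of $(A_n,B_{n+1})$, a lower bound $\gtrsim\gamma_n$ from $(H_p)$ and Lemma \ref{cap_lvs}, the comparison principle, and a product iteration that vanishes because $\sum\gamma_n=\infty$. Two details need care.
\begin{itemize}
\item[(i)] Take the infimum over $\overline{B}_{n+1}$, so that the comparison also holds on $\partial B_{n+1}$.
\item[(ii)] Apply Harnack only where the potential is harmonic, i.e.\ away from $A_n$, which may come arbitrarily close to $x_0$. The paper uses the sphere of radius $3\cdot 2^{n-1}$ and then the minimum principle inside it.
\end{itemize}
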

\begin{remark}
    \eqref{wccap} can be replaced with
     \begin{equation}\label{wcm2}
    \sum_{n=1}^{\infty} \left( \frac{r_n^p \operatorname{cap}_{p}(A_n, B_{n+1})}{\mu(B_{n})} \right)^{\frac{1}{p-1}} = \infty,
     \end{equation}
   if $(V,\mu)$ is not $p$-parabolic, \eqref{wccap} can also be replaced with
    \begin{equation}\label{wcvol}
    \sum_{n=1}^{\infty} \left( \frac{r_n^p\operatorname{cap}_{p}(A_n)}{\mu(B_{n})} \right)^{\frac{1}{p-1}} = \infty,
     \end{equation}
\end{remark}
\begin{remark}\rm
In the above and below, for a positive constant \(C\) whose exact value is unimportant but independent of the variables under consideration,  we write \(f\lesssim g\) (resp.\ \(f\gtrsim g\)) to mean that \(f\le Cg\) (resp.\ \(f\ge Cg\)). And the notation \(f\asymp g\) means that both \(f\lesssim g\) and \(f\gtrsim g\) hold.
\end{remark}

\begin{proof}[\textbf{Proof of sufficiency.}]
    Assume \eqref{wccap} holds. Let $u$ be a positive $p$-superharmonic function on $V$ such that $u \ge 1$ on $A$. 
    
    Let $u_n$ be the $p$-potential of the condenser $(A_n, B_{n+1})$. For any integer $n > 1$, define
    \[
    m_n = \min_{\partial B(x_0, 3 \cdot 2^{n-1})} u_n \quad \text{and} \quad M_n = \max_{\partial B(x_0, 3 \cdot 2^{n-1})} u_n.
    \]
    By Lemma \ref{cap_lvs}, for $\Gamma_\lambda := \{x \in V : u_n(x) > \lambda\}$, we have
    \[
    M_n^{p-1} \operatorname{cap}_{p}(\overline{\Gamma_{M_n}}, B_{n+1}) \ge \operatorname{cap}_{p}(A_n, B_{n+1}).
    \]
     Applying \eqref{hi} and \hyperref[bc]{$\text{(BC)}$} with $R=2^{n-2}/t$ yields
    \[
    m_n \le C_h^{N(6t)} M_n.
    \]
   Since $\overline{\Gamma_{M_n}} \subset B(x_0, 3\cdot 2^{n-1}+1)=:\frac{3}{2}B_n$, we have
    \[
     m_n \gtrsim  M_n \ge \left( \frac{\operatorname{cap}_{p}(A_n, B_{n+1})}{\operatorname{cap}_{p}(\frac{3}{2}B_n, B_{n+1})} \right)^{\frac{1}{p-1}}.
    \]
    From Lemma \ref{cap_balllob}, Lemma \ref{cap_ballupb} and (VD), we have the comparability
    \[
    \operatorname{cap}_{p}(\frac{3}{2}B_n, B_{n+1}) \asymp \frac{\mu(B_n)}{2^{pn}} \asymp \operatorname{cap}_{p}(B_n, B_{n+1}).
    \]
    It follows that
    \begin{equation}\label{lb}
        \min_{x \in \bar{B}_n} u_n \ge m_n \gtrsim \left( \frac{\operatorname{cap}_{p}(A_n, B_{n+1})}{\operatorname{cap}_{p}(B_n, B_{n+1})} \right)^{\frac{1}{p-1}}.
    \end{equation}

    Now, let $u$ be a $p$-superharmonic function on $V$ such that $u \ge 1$ on $A$. For $k \ge 1$, define
    \[
    a_k = \min_{x \in \bar{B}_k} u \quad \text{and} \quad b_k = \min_{x \in \bar{B}_k} u_k.
    \]
    Since $u - a_{k+1} \ge (1 - a_{k+1})u_k$ on $A_k \cup \partial B_{k+1}$, the comparison principle implies
    \[
    u(x) - a_{k+1} \ge (1 - a_{k+1})u_k(x) \quad \text{for } x \in B_{k+1}.
    \]
    Taking the minimum over $\bar{B}_{k-2}$, we obtain
    \[
    a_{k-2} - a_{k+1} \ge (1 - a_{k+1})b_{k-2},
    \]
    which implies
    \[
    1 - a_{k-2} \le (1 - a_{k+1})(1 - b_{k-2}).
    \]
    Let $c_k := 1 - a_k$. By iteration, we have
    \[
    c_n = 1 - a_n \le \prod_{i=0}^{\infty} (1 - b_{n+3i}).
    \]
    Noting that $c_n$ is non-decreasing in $n$ (as $a_n$ is non-increasing), we obtain
    \[
    c_n^3 \le \prod_{i=0}^{\infty} (1 - b_{n+i}).
    \]
    From \eqref{lb} and \eqref{wccap}, the series $\sum_{i=1}^{\infty} b_i$ diverges. Thus, the infinite product $\prod_{i=0}^{\infty} (1 - b_{n+i})$ vanishes for any $n>1$, which implies $c_n \le 0$, or $a_n \ge 1$. Consequently, $u \ge 1$ on $V$, and $A$ is $p$-thick at $x_0$.

    Noting that $\operatorname{cap}_{p}(A_n, B_{n+1})\ge \operatorname{cap}_{p}(A_n)$, Lemma \ref{cap_ballupb} also yields the sufficiency of \eqref{wcm2} and \eqref{wcvol}.
    \end{proof}

To prove the necessity part, we adapt the Kilpel\"{a}inen--Mal\'{y} strategy \cite{KM94} from finite boundary points in $\mathbb{R}^n$ to the point at infinity on a graph. The main additional difficulty is that the continuum integration-by-parts and scaling argument must be replaced by edge-based estimates for the measure term $\sigma=-\mu\Delta_p u$ and by a dyadic iteration over balls. The next two lemmas isolate exactly these new ingredients.

\begin{lemma}\label{lemc}
   Assume that $(V,\mu)$ satisfies \hyperref[VD]{$\text{(VD)}$} and
   \hyperref[PI]{$(P_p)$}. Let \(r\ge 2\), and let
   \(B_1=B(x_0,r)\) and \(B_2=B(x_0,\lfloor\frac{r}{2}\rfloor)\) be two
   concentric balls. Suppose that \(u\) is a non-negative \(p\)-supersolution
   on \(B_1\), and define \(\sigma(x) \coloneqq -\Delta_p u(x)\mu(x)\), so that
   \(\sigma(x)\ge 0\). For any \(a \in \mathbb{R}\) and any \(\gamma\)
   satisfying \(p-1 < \gamma < \frac{\kappa p (p-1)}{\kappa+p-1}\), let
   \(A \coloneqq \{x : u(x) > a\}\). If \(\lambda > 0\) satisfies
    \begin{align}\label{lemc-cond}
        \frac{\mu(B_1\cap A)}{\mu(B_1)} \le \frac{1}{2}\lambda^{\gamma}\left( \frac{1}{\mu(B_2)} \sum_{x \in B_2} (u(x)-a)_+^\gamma \mu(x) \right),
    \end{align}
    then there exists a constant $C$ depending on $p, \kappa$, and $\gamma$ such that
    \begin{align*}
       \bigg( \frac{\lambda^{\gamma}}{\mu(B_2)} \sum_{x \in B_2} (u(x)-a)_+^\gamma \mu(x) \bigg)^{1/\kappa} 
        \le \frac{C \lambda^{\gamma}}{\mu(B_1)}\sum_{x\in B_1}(u(x)-a)_{+}^{\gamma}\mu(x)
       + C\lambda^{p-1}r^p \frac{\sigma(B_1)}{\mu(B_1)}.
    \end{align*}
\end{lemma}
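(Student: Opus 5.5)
This is the discrete analogue of the Kilpeläinen--Malý key lemma, and I would prove it by testing the supersolution inequality against a truncated power of $u$ times a cutoff, then applying the Sobolev inequality \eqref{SI}. By homogeneity I replace $u$ with $v:=\lambda(u-a)$. Then $v$ is a $p$-supersolution with $-\Delta_p v\,\mu=\lambda^{p-1}\sigma$, and $A=\{v>0\}$. The inequality becomes
\[
\Bigl(\tfrac{1}{\mu(B_2)}\textstyle\sum_{B_2}v_+^\gamma\mu\Bigr)^{1/\kappa}\lesssim \tfrac{1}{\mu(B_1)}\textstyle\sum_{B_1}v_+^\gamma\mu+r^p\tfrac{\lambda^{p-1}\sigma(B_1)}{\mu(B_1)},
\]
under the hypothesis $\mu(B_1\cap A)/\mu(B_1)\le \frac12 \mu(B_2)^{-1}\sum_{B_2}v_+^\gamma\mu$.

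\textbf{Step 1: truncation at level one.} I split $v_+=\min(v_+,1)+(v_+-1)_+$. Write $w:=\min(v_+,1)$. Then $w^\gamma\le \mathbf 1_A$, so
\[
\frac{1}{\mu(B_2)}\sum_{B_2}w^\gamma\mu\le C_D\,\frac{\mu(B_1\cap A)}{\mu(B_1)}\le \frac{C_D}{2}\,\frac{1}{\mu(B_2)}\sum_{B_2}v_+^\gamma\mu .
\]
The factor $\frac12$ in \eqref{lemc-cond} is used exactly here. If the constants are arranged correctly (for instance by working with $\mu(B_1)$-normalized averages throughout), the truncated part is absorbed, and the left side is controlled by the contribution of $(v-1)_+$ on the set $\{v>1\}$. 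So it suffices to bound
\[
\Bigl(\tfrac{1}{\mu(B_2)}\textstyle\sum_{B_2}(v-1)_+^\gamma\mu\Bigr)^{1/\kappa}.
\]
If the doubling constant spoils the absorption, I would fall back on truncating at a larger level $2$ or $k$, or use Chebyshev to control $\mu(\{v>1\}\cap B_2)$.

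\textbf{Step 2: Caccioppoli-type estimate.} Let $\eta$ be the cutoff from Lemma \ref{cap_ballupb}, with $\eta=1$ on $B_2$, $\operatorname{supp}\eta\subset B_1$, and $|\nabla\eta|\lesssim 1/r$. I test the supersolution inequality with
\[
\varphi=\eta^p\bigl(1-(1+(v-1)_+)^{1-q}\bigr)\ge 0,\qquad q:=p-\gamma\cdot\text{(suitable)},
\]
where the exponent is chosen so that $\varphi\le 1$ and $\varphi=0$ off $A$. Green's formula (Lemma \ref{gf}) gives
\[
\tfrac12\sum\mu_{xy}|\nabla v|^{p-2}\nabla v\,\nabla\varphi\le \sum\varphi\,\lambda^{p-1}\sigma\le \lambda^{p-1}\sigma(B_1).
\]
The main obstacle is the discrete product and chain rule. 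For an edge $xy$ the increment $\nabla_{xy}\varphi$ is not $\varphi'(v)\nabla v$. I would handle it with the elementary monotonicity inequality: for an increasing $F$, $(s-t)|s-t|^{p-2}(F(s)-F(t))\gtrsim |G(s)-G(t)|^p$ with $G'=(F')^{1/p}$. This is applied edge by edge, and the cutoff is split with $\eta(x)^p$ versus $\eta(y)^p$ via Young's inequality, giving an error $r^{-p}\sum_{B_1}(\cdots)\mu$.

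The result is
\[
\sum\eta^p|\nabla g|^p\mu_{xy}\lesssim r^{-p}\sum_{B_1}(1+(v-1)_+)^{\gamma}\mathbf 1_A\,\mu+\lambda^{p-1}\sigma(B_1),
\]
where $g=(1+(v-1)_+)^{\gamma/(\kappa p)\cdot(\ldots)}-1$ is the power chosen so that $g^{\kappa p}\approx (v-1)_+^\gamma$. The range $p-1<\gamma<\kappa p(p-1)/(\kappa+p-1)$ is exactly what makes this exponent matching possible, via Hölder between the $\gamma$ and $\kappa p$ scales. In the Hölder step I expect to need the restriction $q<p-1$ to keep the test function bounded.

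\textbf{Step 3: Sobolev and conclusion.} I apply \eqref{SI} on $B_1$ to $\eta g$, whose difference splits as $\eta\nabla g+g\nabla\eta$. Raising to the appropriate power gives
\[
\Bigl(\tfrac{1}{\mu(B_1)}\textstyle\sum_{B_2}(v-1)_+^\gamma\mu\Bigr)^{1/\kappa}\lesssim \tfrac{1}{\mu(B_1)}\textstyle\sum_{B_1}v_+^\gamma\mu+r^p\tfrac{\lambda^{p-1}\sigma(B_1)}{\mu(B_1)}.
\]
I use $1+(v-1)_+\le 2\max(1,v_+)$ and $\mathbf 1_A\le$ the averaged term on $A$ to return to $v_+^\gamma$, and use (VD) to swap $\mu(B_1)$ and $\mu(B_2)$. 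Combining with Step 1 gives the lemma.
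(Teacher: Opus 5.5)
Your plan follows the paper's proof. It uses the test function $\eta^p\psi$ with $\psi(t)=1-(1+t)^{1-\alpha}$. In place of the chain rule it uses the edgewise inequality $|\nabla_{xy}\phi(u_+)|^p\le|\nabla_{xy}u_+|^{p-2}\nabla_{xy}u_+\,\nabla_{xy}\psi(u_+)$ with $\phi=\int_0^t(\psi')^{1/p}$. It then handles the cutoff term by Young's inequality, applies \eqref{SI} to $\eta\phi$, and splits at level one, using \eqref{lemc-cond} to dispose of the small values. The only difference is where you truncate. You truncate inside the test function, working with $(v-1)_+$. The paper tests with $\psi(u_+)$ and splits $A$ into $\{u>1\}$ and $\{0<u\le 1\}$ only at the end.

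\textbf{The gap.} The absorption in Step 1 does not close, and your fallback goes the wrong way. Put $s:=\mu(B_2)^{-1}\sum_{B_2}v_+^\gamma\mu$. The hypothesis only says that $\{0<v\le 1\}$ contributes at most $\frac{\mu(B_1)}{\mu(B_2)}\cdot\frac{s}{2}$ to $s$. But $\mu(B_1)/\mu(B_2)$ is not below $2$ in general (on $\mathbb{Z}^d$ it is about $2^d$), so nothing is absorbed. Truncating at level $k\ge 2$ multiplies this contribution by $k^\gamma$. Chebyshev on $\{v>1\}$ says nothing about small values.

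\textbf{The fix.} Truncate instead at a small level $\varepsilon=\varepsilon(\gamma,C_D)$ with $\varepsilon^\gamma\mu(B_1)\le\mu(B_2)$. Then $\{0<v\le\varepsilon\}$ contributes at most $s/2$. On $\{v>\varepsilon\}$, run Steps 2--3 for $w:=v/\varepsilon$, which is a supersolution with measure $\varepsilon^{1-p}\lambda^{p-1}\sigma$, and use $v^\gamma=\varepsilon^\gamma w^\gamma\lesssim\varepsilon^\gamma\phi(w)^{\kappa p}$ there. Any power-one error term $\mu(B_1\cap A)/\mu(B_1)\le s/2$ is bounded by the $B_1$-average of $v_+^\gamma$ via (VD). The choice of $\varepsilon$ only affects $C$.

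The paper's own final step (\emph{we derive}) has the same weakness. It absorbs $(\mu(B_2\cap A)/\mu(B_2))^{1/\kappa}\le\bigl(\mu(B_1)/(2\mu(B_2))\bigr)^{1/\kappa}s^{1/\kappa}$ into $s^{1/\kappa}$, which needs $\mu(B_1)<2\mu(B_2)$. The same small-level truncation repairs it.

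\textbf{Exponents.} Your exponent bookkeeping also needs fixing.
\begin{enumerate}
\item Take $q=\alpha=\gamma/(p-1)$, so that the Young error weight $(1+U_{xy})^{\alpha(p-1)}$ is exactly $(1+U_{xy})^{\gamma}$.
\item Boundedness of $\psi$ requires $\alpha>1$, not $q<p-1$.
\item $g$ is not a free power: it is forced to be $\phi\asymp(1+t)^{1-\alpha/p}-1$, which needs $\alpha<p$.
\item The upper bound on $\gamma$ enters exactly as $\gamma<\kappa(p-\alpha)$. This is the pointwise bound $t^\gamma\lesssim\phi(t)^{\kappa p}$ for $t\ge 1$, not a H\"older interpolation.
\end{enumerate}
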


\begin{proof}
    Without loss of generality, we assume $a=0$. Consider the test function $\varphi(x) = \eta(x)^p \psi(u_{+}(x))$, where $u_{+} = \max\{u,0\}$. 
    Let $\eta$ be a cut-off function chosen such that $0 \le \eta \le 1$, suppose $n:=\lfloor\frac{r}{2}\rfloor$ and define
    \[
        \eta(x) \coloneqq \begin{cases} 
           1, & \text{if} \quad d(x, x_0) \leq n, \\
           0, & \text{if} \quad d(x, x_0) \geq 2n, \\
           \frac{2n-d(x, x_0)}{n}, & \text{otherwise.}
        \end{cases}
    \]
    Consequently, $\eta \equiv 1$ on $B_2$, $\text{supp}(\eta) \subset B(x_0,r-1)$, and $|\nabla_{xy} \eta| \le C/r$.

    Set $\alpha = \frac{\gamma}{p-1}$. Since $\gamma > p-1$, we have $\alpha > 1$. Define the function $\psi: [0, \infty) \to [0, 1)$ by
    \[
        \psi(t) = 1 - (1+t)^{1-\alpha}, \quad \text{with} \quad
        \psi'(t) = (\alpha-1)(1+t)^{-\alpha}.
    \]
    Applying the discrete Green formula in Lemma \ref{gf} on \(B_1\), we obtain
    \begin{align}\label{lemc-1}
        \sum_{x\in B_1} \varphi(x)\sigma(x)\mu(x) = \frac{1}{2}\sum_{x,y\in B_1} \mu_{xy}|\nabla_{xy}u|^{p-2}\nabla_{xy}u \nabla_{xy}\varphi.
    \end{align}
    We decompose the term $\nabla_{xy}\varphi$ as follows:
    \begin{align*}
        \varphi(y) - \varphi(x) = \eta(y)^p (\psi(u_{+}(y)) - \psi(u_{+}(x))) + \psi(u_{+}(x)) (\eta(y)^p - \eta(x)^p).
    \end{align*}
    Substituting this decomposition into \eqref{lemc-1} yields
    \begin{align}\label{lemc-2}
        2\sum_{x\in B_1} \varphi(x)\sigma(x)\mu(x) = & \sum_{x,y\in B_1} \mu_{xy}\eta(y)^p|\nabla_{xy}u_{+}|^{p-2}(\nabla_{xy}u_{+}) (\nabla_{xy}\psi(u_{+})) \nonumber \\ 
        &+ \sum_{x,y\in B_1} \mu_{xy}\psi(u_{+}(x))|\nabla_{xy}u_{+}|^{p-2}(\nabla_{xy}u_{+}) (\nabla_{xy}\eta^p ) \nonumber\\
        \eqqcolon & I_1 + I_2.
    \end{align}

    For each edge \(xy\), set
    \[
        U_{xy}\coloneqq \max\{u_{+}(x),u_{+}(y)\}
        \qquad\text{and}\qquad
        \eta_{\max}\coloneqq \max\{\eta(x), \eta(y)\}.
    \]
    Since \(\psi\) is increasing and \(\psi'(t)=(\alpha-1)(1+t)^{-\alpha}\) is decreasing, we have the edgewise lower bound
    \begin{align}\label{lemc-edge1}
        |\nabla_{xy}u_{+}|^{p-2}\nabla_{xy}u_{+}\nabla_{xy}\psi(u_{+})
        \ge c_\alpha (1+U_{xy})^{-\alpha}|\nabla_{xy}u_{+}|^{p},
    \end{align}
    where \(c_\alpha=\alpha-1\). Indeed, if \(u_{+}(y)\ge u_{+}(x)\), then
    \[
        \nabla_{xy}\psi(u_{+})
        =\int_{u_{+}(x)}^{u_{+}(y)}\psi'(t)\,dt
        \ge \psi'(u_{+}(y))(u_{+}(y)-u_{+}(x))
        =c_\alpha(1+U_{xy})^{-\alpha}\nabla_{xy}u_{+},
    \]
    and the case \(u_{+}(x)\ge u_{+}(y)\) is identical after exchanging \(x\) and \(y\).
    By symmetry of \(\mu_{xy}\), averaging the expressions obtained from \(I_1\) and from the same sum with \(x\) and \(y\) interchanged gives
    \begin{align}\label{lemc-i1}
        I_1
        &= \frac{1}{2}\sum_{x,y\in B_1} \mu_{xy}(\eta(x)^p+ \eta(y)^p)|\nabla_{xy}u_{+}|^{p-2}\nabla_{xy}u_{+}\nabla_{xy}\psi(u_{+}) \nonumber\\
        &\ge c\sum_{x,y\in B_1} \mu_{xy}\eta_{\max}^p(1+U_{xy})^{-\alpha}|\nabla_{xy}u_{+}|^{p}.
    \end{align}

    Next, to estimate $I_2$, we use \( |\eta(y)^p - \eta(x)^p| \le p \eta_{\max}^{p-1}|\eta(y) - \eta(x)| \), so that
    \begin{align*}
        |I_2| \le p\sum_{x,y\in B_1} \mu_{xy}\psi(u_{+}(x))\eta_{\max}^{p-1} |\nabla_{xy}u_{+}|^{p-1}|\nabla_{xy}\eta|.
    \end{align*}
    Applying Young's inequality with the weight \((1+U_{xy})^{-\alpha}\), we obtain
    \begin{align}\label{lemc-i2}
        |I_2| \le & \frac{c}{2}\sum_{x,y\in B_1} \mu_{xy} \eta_{\max}^{p}(1+U_{xy})^{-\alpha} |\nabla_{xy}u_{+}|^{p} \nonumber \\ 
        &+ C_p\sum_{x,y\in B_1} \mu_{xy}\psi(u_{+}(x))^p(1+U_{xy})^{\alpha(p-1)}|\nabla_{xy}\eta|^p.
    \end{align}

    Combining \eqref{lemc-i1}, \eqref{lemc-i2}, and \eqref{lemc-2} yields
    \begin{align}\label{lemc-sum}
        \sum_{x,y\in B_1} \mu_{xy}\eta_{\max}^p (1+U_{xy})^{-\alpha}|\nabla_{xy}u_{+}|^{p} \le & \, C \sum_{x,y\in B_1} \mu_{xy}\psi(u_{+}(x))^p(1+U_{xy})^{\alpha(p-1)}|\nabla_{xy}\eta|^p \nonumber \\ 
        &+ C\sum_{x\in B_1} \varphi(x)\sigma(x)\mu(x).
    \end{align}

    We now invoke the discrete Sobolev inequality. For the function $v \coloneqq \eta \phi(u_+)$, where $\phi(t) = \int^{t}_{0}\psi'(\tau)^{\frac{1}{p}}\, d\tau=C_{p,\gamma}((1+t)^{1-\frac{\alpha}{p}}-1)$, the Sobolev inequality states:
    \[
        \left(\frac{1}{\mu(B_1)} \sum_{x\in B_1} |v(x)|^{\kappa p} \mu(x)\right)^{1/\kappa p} \le C r \left(\frac{1}{\mu(B_1)} \sum_{x,y\in B_1} \mu_{xy} |\nabla_{xy} v|^p \right)^{1/p}.
    \]
    Using the identity $\nabla_{xy} (\eta \phi(u_+)) = \phi(u_{+}(x))\nabla_{xy} \eta + \eta(y) \nabla_{xy} \phi(u_{+})$, we obtain
    \begin{align}\label{lemc-si}
        \left(\frac{1}{\mu(B_1)} \sum_{x\in B_1} |\eta(x) \phi(u_{+}(x))|^{\kappa p} \mu(x)\right)^{1/\kappa} \le& \, C\frac{r^p}{\mu(B_1)}\Bigg[ \sum_{x,y\in B_1} \mu_{xy} \phi(u_{+}(x))^p|\nabla_{xy} \eta|^p \nonumber \\
        &+\sum_{x,y\in B_1} \mu_{xy} \eta(y)^p |\nabla_{xy} \phi(u_{+})|^p\Bigg].
    \end{align}

    The place where the continuum proof uses the chain rule is replaced here by the following two-point inequality along each edge:
    \begin{align}\label{lemc-edge2}
        |\nabla_{xy} \phi(u_{+})|^p
        &= \left| \int_{u_{+}(x)}^{u_{+}(y)} \psi'(t)^{1/p} \, dt\right|^p \nonumber\\
        &\le |\nabla_{xy}u_{+}|^{p-1}\left| \int_{u_{+}(x)}^{u_{+}(y)} \psi'(t) \, dt\right| \nonumber\\
        &= |\nabla_{xy}u_{+}|^{p-2}\nabla_{xy}u_{+}\nabla_{xy}\psi(u_{+}).
    \end{align}
    Hence
    \begin{align*}
        \sum_{x,y\in B_1} \mu_{xy} \eta(y)^p |\nabla_{xy} \phi(u_{+})|^p \le I_1.
    \end{align*}
    Set
    \begin{align*}
        J_1 &:= \sum_{x,y\in B_1} \mu_{xy}
        \psi(u_{+}(x))^p(1+U_{xy})^{\alpha(p-1)} |\nabla_{xy}\eta|^p, \\
        J_2 &:= \sum_{x,y\in B_1} \mu_{xy}
        \phi(u_{+}(x))^p |\nabla_{xy}\eta|^p.
    \end{align*}
    Combining this estimate with \eqref{lemc-si} and \eqref{lemc-sum}, we obtain
    \begin{align*}
        \left(\frac{1}{\mu(B_1)}
        \sum_{x\in B_1} |\eta(x)\phi(u_{+}(x))|^{\kappa p} \mu(x)\right)^{1/\kappa}
        &\le \frac{Cr^p}{\mu(B_1)}(J_1+J_2)
        \nonumber \\
        &\quad + C\frac{r^p}{\mu(B_1)}
        \sum_{x\in B_1} \eta(x)^p \psi(u_{+}(x))\sigma(x)\mu(x).
    \end{align*}
      
    Then, using the volume doubling property \hyperref[VD]{$\text{(VD)}$}
    and the fact that $\eta \equiv 1$ on $B_2$, we have
    \begin{align}\label{lemc-3}
        \Bigg(\frac{1}{\mu(B_2)} \sum_{x\in B_2}
        ((1+u_+(x))^{1-\alpha}-1)^{\kappa p} \mu(x)\Bigg)^{1/\kappa}
        \nonumber \\
        &\le C \Bigg(\frac{1}{\mu(B_1)}
        \sum_{x\in B_1} |\eta(x)\phi(u_{+}(x))|^{\kappa p} \mu(x)\Bigg)^{1/\kappa}
        \nonumber \\
        &\le C\frac{r^p}{\mu(B_1)}(J_1+J_2)
        \nonumber \\
        &\quad + C\frac{r^p}{\mu(B_1)}
        \sum_{x\in B_1} \eta(x)^p \psi(u_{+}(x))\sigma(x)\mu(x).
    \end{align}

    Since \(\psi\) is increasing and bounded by \(1\), it follows that
    \begin{align*}
       \psi(u_{+}(x))^p(1+U_{xy})^{\alpha(p-1)}
       \le C(1+U_{xy})^{\gamma}
       \le C(1+u_{+}(x)^{\gamma}+u_{+}(y)^{\gamma}).
    \end{align*}

    Then, from $\phi(t)^p\le C t^{\gamma}$ and
    $\mu(x)=\sum_{y\in V}\mu_{xy}$, we have
    \begin{align*}
        r^p(J_1+J_2)
        &\le C\sum_{x\in A\cap B_1}\sum_{y\in B_1} \mu_{xy}
        \bigl(1+2u_{+}(x)^{\gamma}+u_{+}(y)^{\gamma}\bigr) \\
        &\le C \sum_{x\in A\cap B_1}\mu(x)
        + C\sum_{x\in A\cap B_1}u_{+}(x)^{\gamma}\mu(x)
        + C\sum_{y\in B_1}u_{+}(y)^{\gamma}\mu(y) \\
        &\le C \sum_{x\in A\cap B_1}\mu(x)
        + C\sum_{x\in B_1}u_{+}(x)^{\gamma}\mu(x).
    \end{align*}

    Substituting this into \eqref{lemc-3}, we have
    \begin{align*}
        \Bigg(\frac{1}{\mu(B_2)} \sum_{x\in B_2}
        ((1+u_+(x))^{1-\alpha}-1)^{\kappa p} \mu(x)\Bigg)^{1/\kappa}
        &\le C\frac{r^p\sigma(B_1)}{\mu(B_1)}
        + C \frac{\mu(B_1\cap A)}{\mu(B_1)}
        \nonumber \\
        &\quad + \frac{C}{\mu(B_1)}
        \sum_{x\in B_1}u_{+}(x)^{\gamma}\mu(x).
    \end{align*}

    We split the set $A$ into two subsets: $A_1=\{x:u(x)>1\}$ and $A_2=\{x:0<u(x)\le 1\}$. Then, from \(\kappa>1\) and \hyperref[VD]{$\text{(VD)}$},
    \begin{align*}
        \Bigg( \frac{1}{\mu(B_2)} \sum_{x \in B_2} u(x)_+^\gamma \mu(x)
        \Bigg)^{1/\kappa}
        &\le C\left(\frac{\mu(B_2\cap A)}{\mu(B_2)}\right)^{1/\kappa}
        \nonumber \\
        &\quad + C\left( \frac{1}{\mu(B_2)}
        \sum_{x \in B_2\cap A_1} u(x)_+^\gamma \mu(x) \right)^{1/\kappa} \\
        &\le C\left(\frac{\mu(B_1\cap A)}{\mu(B_1)}\right)^{1/\kappa}
        \nonumber \\
        &\quad + C \Bigg(\frac{1}{\mu(B_2)} \sum_{x\in B_2}
        ((1+u_+(x))^{1-\alpha}-1)^{\kappa p} \mu(x)\Bigg)^{1/\kappa} \\
        &\le C\left(\frac{\mu(B_1\cap A)}{\mu(B_1)}\right)^{1/\kappa}
        + C \frac{r^p\sigma(B_1)}{\mu(B_1)} \nonumber \\
        &\quad + \frac{C}{\mu(B_1)}\sum_{x\in B_1}u_{+}(x)^{\gamma}\mu(x).
    \end{align*}
    
    Consider the scaled function $\hat{u}=\lambda u$, where $\lambda>0$ is chosen such that
    \[
    \frac{\mu(B_1\cap A)}{\mu(B_1)} \le \frac{1}{2}\lambda^{\gamma}\left( \frac{1}{\mu(B_2)} \sum_{x \in B_2} u(x)_+^\gamma \mu(x) \right).
    \]
    Noting that $\hat{\sigma}=\lambda^{p-1}\sigma$, we derive
    \begin{align*}
        \left( \frac{\lambda^{\gamma}}{\mu(B_2)}
        \sum_{x \in B_2} u(x)_+^\gamma \mu(x) \right)^{1/\kappa}
        \le \frac{C \lambda^{\gamma}}{\mu(B_1)}
        \sum_{x\in B_1}u_{+}(x)^{\gamma}\mu(x)
        + C\lambda^{p-1}r^p \frac{\sigma(B_1)}{\mu(B_1)},
    \end{align*}
    which completes the proof.
\end{proof}

\begin{lemma}\label{suph_upb}
        Let $(V,\mu)$ be a graph satisfying \hyperref[VD]{$\text{(VD)}$}
        and \hyperref[PI]{$(P_p)$}. Let \(B=B(x_0,r)\) and
        \(2B=B(x_0,2r)\). Suppose \(u\) is a non-negative supersolution on
        \(2B\), and define
        \[
            \sigma(x):= -\Delta_p u(x)\mu(x).
        \]
        Then, for every \(\gamma>p-1\), there exists a constant \(C\),
        depending only on \(p\), \(\gamma\), \(\kappa\), \(C_1\), and \(C_2\),
        such that
        \begin{align*}
          u(x_0)
          \le C \left( \frac{1}{\mu(B)} \sum_{x \in B} u(x)^\gamma \mu(x)
          \right)^{1/\gamma} + C\sum^{n}_{i=1}
          \left(\frac{r_i^p\sigma(B_i)}{\mu(B_i)}\right)^{\frac{1}{p-1}},
      \end{align*}
      where \(B_{i}=B(x_0,\lfloor2^{2-i}r\rfloor)\) for \(i=1,\dots, n\),
      and \(B_n=B(x_0,1)=\{x_0\}\).
\end{lemma}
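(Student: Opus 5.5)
We follow the Kilpel\"ainen--Mal\'y iteration, with Lemma \ref{lemc} as the one-step estimate. Fix $\gamma$ with $p-1<\gamma<\frac{\kappa p(p-1)}{\kappa+p-1}$; for larger $\gamma$ the claim follows from this case by H\"older's inequality. We build a nondecreasing sequence of levels $a_1\le a_2\le\cdots\le a_n$ with $u(x_0)\le a_n + (\text{last increment})$, and each increment will be controlled by two terms: the $L^\gamma$ average of $u$ on the big ball, and $\bigl(r_i^p\sigma(B_i)/\mu(B_i)\bigr)^{1/(p-1)}$. Write $r_i=\lfloor 2^{2-i}r\rfloor$, so $B_{i+1}\subset B_i$ with ratio roughly one half. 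This matches the pair $(B_1,B_2)$ in Lemma \ref{lemc}, so the lemma can be applied with $B_i$ in the role of $B_1$ and $B_{i+1}$ in the role of $B_2$. We may use radius $2r$ on the first step because $u$ is a supersolution on $2B$, and (VD) lets us replace $B_1$ by $B$ at the cost of a constant.

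Set $a_0=0$. Given $a_i$, we define $a_{i+1}=a_i+\delta_i$. The increment $\delta_i$ is the smallest number such that
\[
\frac{1}{\mu(B_{i+1})}\sum_{x\in B_{i+1}}(u(x)-a_{i+1})_+^\gamma\mu(x)\le \epsilon\,\delta_i^{\gamma}\frac{\mu(B_{i+1}\cap\{u>a_{i+1}\})}{\mu(B_{i+1})}\cdots
\]
More concretely, following Kilpel\"ainen--Mal\'y, we choose $\delta_i$ as the smallest value satisfying
\[
\frac{1}{\mu(B_i)}\sum_{x\in B_i}\Bigl(\frac{u-a_i}{\delta_i}\Bigr)_+^{\gamma}\mu(x)\le \epsilon ,
\]
with a small $\epsilon$ to be fixed later. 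With $\lambda=1/\delta_i$ this makes the hypothesis \eqref{lemc-cond} of Lemma \ref{lemc} checkable. By Chebyshev, the set where $u>a_{i+1}=a_i+\delta_i$ has relative measure at most $\epsilon$, so on the next ball the measure of the super-level set $\{u>a_{i+1}\}$ is small.

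The key dichotomy is then as follows. Either $\delta_{i}\le \frac12\delta_{i-1}$, or $\delta_i>\frac12\delta_{i-1}$. In the second case we apply Lemma \ref{lemc} on $(B_{i-1},B_i)$ with level $a=a_i$ and $\lambda=1/\delta_i$. The left side of the lemma is then of size about $\epsilon^{1/\kappa}$, by minimality of $\delta_i$. On the right side, the first term is at most $C\epsilon\,(\delta_{i-1}/\delta_i)^\gamma\lesssim\epsilon$, after comparing with level $a_{i-1}$ and using (VD). The second term is $C\delta_i^{1-p}r_{i-1}^p\sigma(B_{i-1})/\mu(B_{i-1})$. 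Since $\kappa>1$, choosing $\epsilon$ small gives $\epsilon^{1/\kappa}\ge 2C\epsilon$, which forces
\[
\delta_i\lesssim \Bigl(\frac{r_{i-1}^p\sigma(B_{i-1})}{\mu(B_{i-1})}\Bigr)^{1/(p-1)}.
\]
Hence in both cases $\delta_i\le\frac12\delta_{i-1}+C\bigl(r_{i-1}^p\sigma(B_{i-1})/\mu(B_{i-1})\bigr)^{1/(p-1)}$. Summing over $i$ gives
\[
\sum_i\delta_i\le 2\delta_1+C\sum_i\Bigl(\frac{r_i^p\sigma(B_i)}{\mu(B_i)}\Bigr)^{1/(p-1)},
\]
and $\delta_1\lesssim\epsilon^{-1/\gamma}\bigl(\mu(B)^{-1}\sum_B u^\gamma\mu\bigr)^{1/\gamma}$. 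At the final step $B_n=\{x_0\}$, the defining inequality for $\delta_n$ gives $(u(x_0)-a_n)_+\le\epsilon^{1/\gamma}\delta_n$. Therefore $u(x_0)\le a_{n}+\delta_n\le\sum_i\delta_i$, which is the claimed bound.

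The main obstacle is verifying \eqref{lemc-cond} at each step. This requires the relative measure $\mu(B_{i-1}\cap\{u>a_i\})/\mu(B_{i-1})$ to be bounded by half of $\delta_i^{-\gamma}$ times the normalized $L^\gamma$ average on $B_i$, which by minimality is about $\epsilon$. The Chebyshev bound from the previous step only gives this measure $\le\epsilon$ on $B_{i-1}$, so we need a strict gap. We would get it by choosing the defining inequality for $\delta_{i-1}$ with $\epsilon/4$ rather than $\epsilon$, absorbing the (VD) constants between $B_{i-1}$ and $B_i$. We also have to handle the case where $\delta_i=0$ or $u\le a_i$ on $B_i$, which is trivial. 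A further check is that the discrete radii $\lfloor 2^{2-i}r\rfloor$ satisfy $B_i=B(x_0,\lfloor r_{i-1}/2\rfloor)$ up to harmless rounding, so that Lemma \ref{lemc} applies verbatim.
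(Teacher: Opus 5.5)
Your proof has the same architecture as the paper's: levels $a_i$ built from normalized $L^\gamma$ averages of $(u-a_i)_+$, Chebyshev to verify \eqref{lemc-cond}, Lemma \ref{lemc} with $\lambda\sim(a_{i+1}-a_i)^{-1}$, absorption via $\kappa>1$, and telescoping. It differs at the absorption step, and there your version is the one that works.

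After applying Lemma \ref{lemc}, the paper bounds the first term on the right by $C\delta^\gamma\lambda_i^\gamma/\lambda_{i-1}^\gamma$. It then asserts that $\lambda_i/\lambda_{i-1}=(a_i-a_{i-1})/(a_{i+1}-a_i)\le C$ ``from (VD)''. That is not justified. The ratio compares $\sum_{B_i}(u-a_{i-1})_+^\gamma\mu$ with the smaller quantity $\sum_{B_{i+1}}(u-a_i)_+^\gamma\mu$, so it can be arbitrarily large; it is infinite when $u\le a_i$ on $B_{i+1}$. Doubling gives no control over it.

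Your dichotomy is the Kilpel\"ainen--Mal\'y device that closes this gap: either $\delta_i\le\frac12\delta_{i-1}$, or $(\delta_{i-1}/\delta_i)^\gamma<2^\gamma$ and the absorption goes through. Both cases give $\delta_i\le\frac12\delta_{i-1}+C\bigl(r_{i-1}^p\sigma(B_{i-1})/\mu(B_{i-1})\bigr)^{1/(p-1)}$. This telescopes, at the harmless cost of twice the first increment.

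Three details of your write-up still need repair; none affects the strategy.

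(i) The factor $\frac12$ in \eqref{lemc-cond} cannot come from running the construction with $\epsilon/4$. With $\lambda=\delta_i^{-1}$, the right side of \eqref{lemc-cond} is half the threshold that defines $\delta_i$. Chebyshev bounds the left side by the threshold that defines $\delta_{i-1}$. With a single threshold throughout, these never match. Take $\lambda=2^{1/\gamma}\delta_i^{-1}$ instead, as the paper does. The required bound then becomes exactly the Chebyshev bound $\le\epsilon$, the left side of Lemma \ref{lemc} becomes $(2\epsilon)^{1/\kappa}$, and only constants change. No (VD) is involved here.

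(ii) (VD) gives $\frac{1}{\mu(B)}\sum_{B}u^\gamma\mu\le\frac{C_D}{\mu(2B)}\sum_{2B}u^\gamma\mu$, not the reverse. So you cannot start the chain at $2B$ and end with an average over $B$. Start the chain at $B$ itself, which is allowed since $u$ is a supersolution on $B\subset 2B$. Its $\sigma$-terms then form a subfamily of those in the statement.

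(iii) Lemma \ref{lemc} requires radius $r\ge 2$, so it never produces the pair $(B(x_0,1),\{x_0\})$. Stop instead at $B_N=B(x_0,1)$. You have $(u(x_0)-a_N)_+^\gamma\mu(x_0)\le\epsilon\,\delta_N^\gamma\mu(B_N)$, and (VD) at radius $\frac12$ gives $\mu(B(x_0,1))\le C_D\mu(x_0)$. Together these yield $u(x_0)\le a_N+(C_D\epsilon)^{1/\gamma}\delta_N$.

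The paper's own proof has the same imprecisions (ii) and (iii): it defines its first level via $B(x_0,2r)$ but quotes it as an average over $B$, and it iterates Lemma \ref{lemc} down to radius $0$.
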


\begin{proof}
    By Holder's inequality, for any \(\gamma_1<\gamma_2\),
    \begin{align*}
     \left( \frac{1}{\mu(B)} \sum_{x \in B} u(x)^{\gamma_1} \mu(x) \right)^{1/\gamma_1}\le \left( \frac{1}{\mu(B)} \sum_{x \in B} u(x)^{\gamma_2} \mu(x) \right)^{1/\gamma_2},
    \end{align*}
    we assume \(p-1 < \gamma < \frac{\kappa p (p-1)}{\kappa+p-1}\).

    Let \(n=min\{k\in \N:r< 2^{k-1}\}\). For \(i=1,\dots, n\), define
    \(B_{i}=B(x_0,r_i)\), where \(r_i=\lfloor2^{2-i}r\rfloor\) and \(r_{n}=0\).
    Thus \(\{B_i\}_{i=1}^n\) is a dyadic chain of balls shrinking from radius
    comparable to \(r\) down to the single vertex \(x_0\). Fix some constant
    \(\delta>0\) to be determined later, let \(a_0=0\), and for \(0\le i\le n\),
    define
    \begin{align*}
        a_{i+1}=a_{i}+\delta^{-1}\left( \frac{1}{\mu(B_{i+1})} \sum_{x \in B_{i+1}} (u(x)-a_i)_+^{\gamma} \mu(x) \right)^{1/\gamma},
    \end{align*}
    clearly, \(a_{i+1}>a_{i}\). Hence for \(i\ge 1\),
    \begin{align}\label{theoup-vlm}
        \mu(B_i\cap\{x:u(x)>a_i\})&\le (a_i-a_{i-1})^{-\gamma}\sum_{\substack{x\in B_i\\u(x)>a_i}}(u(x)-a_{i-1})^{\gamma} \mu(x) \nonumber\\
        &\le (a_i-a_{i-1})^{-\gamma}\sum_{x\in B_i}(u(x)-a_{i-1})_{+}^{\gamma} \mu(x) \nonumber\\
        &=\delta^{\gamma}\mu(B_{i}).
    \end{align}

    Now letting \(\lambda_{i-1}=2^{1/\gamma}(a_{i}-a_{i-1})^{-1}\), we have
    \begin{align*}
        \frac{\mu(B_i\cap\{x:u(x)>a_i\})}{\mu(B_{i})}=\delta^{\gamma}=\frac{1}{2}\lambda_i^{\gamma}\left( \frac{1}{\mu(B_{i+1})} \sum_{x \in B_{i+1}} (u(x)-a_i)_+^{\gamma} \mu(x) \right),
    \end{align*}
    which satisfies \eqref{lemc-cond}.
    Applying Lemma \ref{lemc}, we obtain
    \begin{align*}
        \bigg( \frac{\lambda_i^{\gamma}}{\mu(B_{i+1})} \sum_{x \in B_{i+1}} (u(x)-a_i)_+^\gamma \mu(x) \bigg)^{1/\kappa} 
        \le \frac{C \lambda_i^{\gamma}}{\mu(B_i)}\sum_{x\in B_i}(u(x)-a_i)_{+}^{\gamma}\mu(x)
       + C\lambda_i^{p-1}r_i^p \frac{\sigma(B_i)}{\mu(B_i)},
    \end{align*}
  which is
  \begin{align*}
      (2\delta^{\gamma})^{1/\kappa}\le C\delta^{\gamma}\frac{\lambda_i^{\gamma}}{\lambda_{i-1}^{\gamma}}+C\lambda_i^{p-1}r_i^p \frac{\sigma(B_i)}{\mu(B_i)}.
  \end{align*}

    From (VD), we have
    \begin{align*}
        \frac{\lambda_i^{\gamma}}{\lambda_{i-1}^{\gamma}}=\frac{(a_{i}-a_{i-1})^\gamma}{(a_{i+1}-a_{i})^\gamma}=\frac{\mu(B_{i+1})\sum_{x \in B_{i}} (u(x)-a_{i-1})_+^{\gamma} \mu(x)}{\mu(B_{i})\sum_{x \in B_{i+1}} (u(x)-a_i)_+^{\gamma} \mu(x)}\le C.
    \end{align*}

    It follows that
    \begin{align*}
      (2\delta^{\gamma})^{1/\kappa}\le C\delta^{\gamma}+C(a_{i+1}-a_i)^{1-p}r_i^p \frac{\sigma(B_i)}{\mu(B_i)}.
  \end{align*}

  Now by fixing some \(\delta<2^{-1/\gamma} C^{-\frac{\kappa}{\gamma(\kappa-1)}}\), we obtain
  \begin{align*}
      a_{i+1}-a_i \le C\left(r_i^p \frac{\sigma(B_i)}{\mu(B_i)}\right)^{1/(p-1)}.
  \end{align*}
 Hence
 \begin{align*}
      a_n=a_1+\sum_{i=1}^{n-1}(a_{i+1}-a_{i})\le a_1+C\sum_{i=1}^{n-1}\left(r_i^p \frac{\sigma(B_i)}{\mu(B_i)}\right)^{1/(p-1)}.
 \end{align*}
 Since \(\delta<1\) and \(B_n=\{x_0\}\), taking \(i=n\) in \eqref{theoup-vlm} gives \(u(x_0)\le a_n\). This completes the proof, because
\[
a_1=\delta^{-1} \left( \frac{1}{\mu(B)} \sum_{x \in B} u(x)_+^\gamma \mu(x) \right)^{1/\gamma}.
\]
\end{proof}

Now we are ready to prove the necessity of the theorem \ref{wcm}.

    \begin{proof}[\textbf{Proof of necessity.}]
    Now let us assume 
    \begin{equation*}
    \sum_{n=1}^{\infty} \left( \frac{\operatorname{cap}_{p}(A_n, B_{n+1})}{\operatorname{cap}_{p}(B_n, B_{n+1})} \right)^{\frac{1}{p-1}} < \infty,
\end{equation*}
    then there exists some integer \(K>0\) such that
    \begin{align*}
        \sum_{n=K}^{\infty} \left( \frac{\operatorname{cap}_{p}(A_n, B_{n+1})}{\operatorname{cap}_{p}(B_n, B_{n+1})} \right)^{\frac{1}{p-1}} < \frac{1}{2C},
    \end{align*}
    where \(C=(C_1 C_2)^{1/(p-1)} C_3\).

    Define \(A'=A\setminus B_{K-1}\) and \(A'_n=A'\cap B_n\), we have
    \begin{align}\label{theo_wn1}
        \sum_{n=K}^{\infty} \left( \frac{\operatorname{cap}_{p}(A'_n, B_{n+1})}{\operatorname{cap}_{p}(B_n, B_{n+1})} \right)^{\frac{1}{p-1}} < \frac{1}{2C},
    \end{align}

    Let \(v_n\) be the $p$-potential of the condenser $(A'_n, B_{n+1})$. By Lemma \ref{suph_upb}, we obtain
    \begin{align*}
      v_n(x_0)\le C \left( \frac{1}{\mu(B_n)} \sum_{x \in B_n} v_n(x)^\gamma \mu(x) \right)^{1/\gamma}+C\sum^{n+1}_{i=1}\left(\frac{r_i^p\sigma_n(B_i)}{\mu(B_i)}\right)^{\frac{1}{p-1}},
    \end{align*}
    where \(\gamma>p-1\) and \(\sigma_n=-\mu\Delta v_n\).

    By Lemma \ref{cap_ballupb} and Lemma \ref{cap_pt}, we have
    \[
      \frac{r_n^p\sigma_n(B_n)}{\mu(B_n)} \le C_1 C_2  \frac{\operatorname{cap}_{p}(A'_n, B_{n+1})}{\operatorname{cap}_{p}(B_n, B_{n+1})}.
    \]
    Hence, noting that \(\sigma_n=0\) on \(B_i\) for \(i< K\), we have
    \begin{align}\label{theo_wn2}
         v_n(x_0)&\le C_3 \left( \frac{1}{\mu(B_n)} \sum_{x \in B_n} v_n(x)^\gamma \mu(x) \right)^{1/\gamma}+C\sum^{n+1}_{i=K}\left(\frac{\operatorname{cap}_{p}(A'_i, B_{i+1})}{\operatorname{cap}_{p}(B_i, B_{i+1})}\right)^{\frac{1}{p-1}}
         \nonumber \\
         &\le C_3 \left( \frac{1}{\mu(B_n)} \sum_{x \in B_n} v_n(x)^\gamma \mu(x) \right)^{1/\gamma}+\frac{1}{2}.
    \end{align}

     Noting \(v_k\le 1\) and \(\gamma>p-1\), using \hyperref[PI]{$(P_p)$}, we obtain
    \begin{align*}
      \frac{1}{\mu(B_n)} \sum_{x \in B_n} v_n(x)^\gamma \mu(x) &\le \frac{1}{\mu(B_n)} \sum_{x \in B_n} v_n(x) \mu(x) 
      \\
      &\lesssim  \left(\frac{r_n^p}{\mu(B_n)} \sum_{x,y \in B_{n+1}} |\nabla_{xy}v_n|^p \mu_{xy} \right)^{\frac{1}{p}}
      \\
      &=\left(\frac{r_n^p\operatorname{cap}_{p}(A'_n, B_{n+1})}{\mu(B_n)}\right)^{\frac{1}{p}}.
    \end{align*}

    Noting that by \hyperref[VD]{$\text{(VD)}$}, Lemma \ref{cap_ballupb} and \eqref{theo_wn1},  we have
    \begin{align*}
       \frac{r_n^p\operatorname{cap}_{p}(A'_n, B_{n+1})}{\mu(B_n)}\to 0, \quad\text{when $n\to \infty$},
    \end{align*}
    it follows that
    \begin{align*}
        \frac{1}{\mu(B_n)} \sum_{x \in B_n} v_n(x)^\gamma \mu(x) \to 0, \quad\text{when $n\to \infty$}.
    \end{align*}

    Then from \eqref{theo_wn2}, we obtain
    \begin{align*}
        \lim_{n\to \infty} v_n(x_0) \le \frac{1}{2}.
    \end{align*}

    By the comparison principle, the sequence \(v_n\) is increasing in \(n\). Let \(v:=\lim\limits_{n\to \infty}v_n\). Clearly, \(v\equiv1\) on \(A'\) and \(v\) is $p$-harmonic outside \(A'\). Hence, \(A'\) is not thick at \(x_0\).

    Since \(A\setminus A'\) is finite, \(A\) is also not thick at \(x_0\).

    For the proof of \eqref{wcvol}, we let \(u_n^k\) be the $p$-potential of $(A'_n, B_k)$ for $k>n$, noting $\{u_n^k\}$ is increasing by $k$, we define
    \[
    u_n:=\lim_{k \to\infty} u_n^k.
    \]

    Then, noting that \(u_n\) is $p$-superharmonic on \(V\), Lemma \ref{suph_upb} gives, for any $k>n$,
    \begin{align*}
         u_n(x_0)&\le C \left( \frac{1}{\mu(B_k)} \sum_{x \in B_k} u_n(x)^\gamma \mu(x) \right)^{1/\gamma}+C\sum^{k+1}_{i=K}\left(\frac{r_i^p\sigma_n(B_i)}{\mu(B_i)}\right)^{\frac{1}{p-1}}
         \nonumber\\
         & \le C \left( \frac{1}{\mu(B_k)} \sum_{x \in B_k} u_n(x)^\gamma \mu(x) \right)^{1/\gamma}+C\sum_{i=K}^{\infty} \left( \frac{r_i^p\operatorname{cap}_{p}(A'_i)}{\mu(B_{i})} \right)^{\frac{1}{p-1}},
    \end{align*}
    and  similar to above, we also have
    \begin{align*}
      \frac{1}{\mu(B_k)} \sum_{x \in B_k} u_n(x)^\gamma \mu(x) 
      &\lesssim  \left(\frac{r_k^p}{\mu(B_k)} \sum_{x,y \in B_{k+1}} |\nabla_{xy}u_n|^p \mu_{xy} \right)^{\frac{1}{p}}+\min_{x\in B_k}u_n(x)
      \\
      &\le (\frac{r_n^p\operatorname{cap}_{p}(A'_n)}{\mu(B_{n})})^{\frac{1}{p}}+
      \min_{x\in B_k}u_n(x),
    \end{align*}
    Since \((V,\mu)\) is not \(p\)-parabolic, \(u_n\) is not constant and is the minimal admissible function of \(A'_n\) which is $p$-harmonic on \((A'_n)^c\),  it follows that \(\min\limits_{x\in V}u_n(x)=0\). Then, letting \(k \to \infty\), we have
     \begin{align*}
        u_n(x_0) \le\frac{1}{2}.
     \end{align*}
    The same argument completes the proof of the necessity for \eqref{wcvol}.

\end{proof}
\section{Criterion for \texorpdfstring{$D_p$}{Dp}-Massive Sets}\label{cr_dpm}

We next turn to the finite-energy case. The
goal of this section is to prove the characterization of $D_p$-massive sets in
terms of non-parabolic subsets of finite relative capacity.

\begin{theorem}\label{dpmcr}
Let \( G = (V, E) \) be an infinite, connected, and locally finite graph. A subset \( \Omega \subset V \) is \( D_p \)-massive if and only if there exists a non-parabolic subset \( \Omega_1 \subset \Omega \) such that
\[
\operatorname{cap}_p(\Omega_1, \Omega) < \infty.
\]
\end{theorem}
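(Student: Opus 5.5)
We prove the two directions separately. In both, the key object is the $p$-potential of the condenser $(\Omega_1,\Omega^c)$, equivalently of $(\Omega_1,\Omega)$ in the relative-capacity notation. The strategy follows the Grigor'yan--Holopainen argument on manifolds, with the continuum tools replaced by the discrete lemmas of Section~\ref{lems}.

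\emph{Sufficiency.} Let $\Omega_1\subset\Omega$ be non-parabolic with $\operatorname{cap}_p(\Omega_1,\Omega)=C_p(\Omega_1,\Omega^c)<\infty$. Let $w$ be the $p$-potential of $(\Omega_1,\Omega^c)$. Then $w=1$ on $\Omega_1$, $w=0$ on $\Omega^c$, $w$ is $p$-harmonic on $\Omega\setminus\Omega_1$, and $D_p(w)<\infty$. We then solve the exterior problem on $\Omega$ with boundary value $0$ and ``value $1$ at infinity along $\Omega_1$''.
\begin{itemize}
\item[(a)] Take an exhaustion $B_k$ and let $h_k$ be the solution on $\Omega\cap B_k$ with $h_k=0$ on $\partial\Omega$ and $h_k=w$ on $\partial B_k\cap\Omega$.
\item[(b)] By the Green formula computation from the proof of Theorem~\ref{theo_lioup}, $D_p(h_k)\le D_p(w)$. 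A diagonal subsequence converges to a $p$-harmonic $h$ on $\Omega$ with $0\le h\le1$, $h=0$ on $\Omega^c$, and $D_p(h)\le D_p(w)$ by Fatou's lemma.
\end{itemize}
The main obstacle is to show $h\not\equiv0$. Here non-parabolicity of $\Omega_1$ enters. Fix a finite $K\subset\Omega_1$; since $\Omega_1$ is not $p$-parabolic, $C_p(K,\infty;\Omega_1)>0$. I would argue by contradiction. If $h_k\to0$ locally, then $w-h_k$ is admissible, after cutting off, for the condenser $(K,\text{far region})$ inside $\Omega_1$, with energy tending to zero. This follows from uniform convexity of the $p$-energy: $D_p(h_k)\to D_p(w)$ together with weak convergence $h_k\rightharpoonup w$ would force strong convergence, giving $C_p(K,\infty;\Omega_1)=0$, which is a contradiction.

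A cleaner alternative is to define $h$ directly as the minimizer of $D_p(f)$ over functions with $f=0$ on $\Omega^c$ and $f-w$ in the $D_p$-closure of $\ell_0(\Omega)$. Non-parabolicity of $\Omega_1$ then ensures that $1$ on $\Omega_1$ is not in that closure near infinity, so $h\ne0$. Once $h\not\equiv0$, the strong maximum principle gives $0<h<1$ on the component where $h$ is positive. Then $1-h$ witnesses $D_p$-massiveness of that component, and Remark~\ref{rem_mon_mass} transfers this to $\Omega$.

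\emph{Necessity.} Let $u$ satisfy $0<u<1$ on $\Omega$, $u=1$ on $\Omega^c$, $\Delta_pu=0$ on $\Omega$, and $D_p(u)<\infty$. Set $v=1-u$, so that $v=0$ on $\Omega^c$ and $\sup_\Omega v=1$. Choose $a\in(0,1)\setminus v(V)$ and put $\Omega_1=\{v>a\}$. The function $\min\{v/a,1\}$ equals $1$ on $\Omega_1$, vanishes on $\Omega^c$, and has energy at most $a^{-p}D_p(v)<\infty$, so $\operatorname{cap}_p(\Omega_1,\Omega)<\infty$.

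For non-parabolicity, note that on $\Omega_1$ the function $(v-a)/(1-a)$ is positive and $p$-harmonic, vanishes on $\partial\Omega_1$, and stays below $1$. If $\Omega_1$ were $p$-parabolic, then restricted to the induced subgraph it would be a bounded $p$-subharmonic function on a parabolic graph with boundary values $0$. A comparison with the capacity potentials $C_p(K,B_n^c;\Omega_1)\to0$ would force it to be $\le0$, a contradiction. Here I would use the level-set flux identity of Lemma~\ref{lem_eng} to make the comparison quantitative.
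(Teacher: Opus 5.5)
\textbf{Necessity.} This direction follows the paper. The paper takes $\Omega_1=\{u<\tfrac12\}$, which is your $\{v>a\}$ with $a=\tfrac12$, and uses the same truncation $\min\{1,2(1-u)\}$. For non-parabolicity it observes that the edges leaving $\Omega_1$ all go to larger values of $u$. Dropping them makes $u$ a nonconstant positive $p$-superharmonic function on the induced subgraph, so Proposition~\ref{prop:parabolic_equiv}(3) applies directly. Your comparison with capacity potentials only reproves that proposition. Note also that $\sup_\Omega v=1$ is not automatic: choose $a<\sup_\Omega v$, or normalize $u$ first.

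\textbf{Sufficiency: the flawed step.} Your route here differs genuinely from the paper's and is viable, but the step you identify as the main obstacle is argued incorrectly. Extend $h_k$ by $w$ off $\Omega\cap B_k$. If $h_k\to0$ pointwise, this energy-bounded sequence converges weakly to $0$, not to $w$. Nothing gives $D_p(h_k)\to D_p(w)$. For $p\neq2$ there is also no Pythagorean identity $D_p(w)=D_p(h_k)+D_p(w-h_k)$ that would upgrade weak convergence to strong.

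\textbf{How to repair it.} Strong convergence is not needed. Put $g_k:=w-h_k$, which lies in $\ell_0(\Omega)$ and vanishes on $\Omega^c$.
\begin{enumerate}
\item The Dirichlet principle gives $D_p(h_k)\le D_p(w)$. (Not the computation in Theorem~\ref{theo_lioup}, which used $p$-harmonicity of the comparison function.) Hence $D_p(g_k)^{1/p}\le 2D_p(w)^{1/p}$.
\item By reflexivity of the space with norm $(D_p(f)+|f(o)|^p)^{1/p}$, $o\in\Omega^c$, a subsequence of $g_k$ converges weakly. Point evaluations are continuous, so the weak limit is the pointwise limit $w-h$.
\item The weak limit lies in the norm-closed, hence weakly closed, subspace $\overline{\ell_0(\Omega)}$. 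So $h\equiv0$ would force $w\in\overline{\ell_0(\Omega)}$.
\item This is impossible. Suppose $\varphi_j\in\ell_0(\Omega)$ and $\varphi_j\to w$ in norm. Then $\varphi_j\to1$ on any finite $K\subset\Omega_1$, and $\varphi_j|_{\Omega_1}\in\ell_0(\Omega_1)$. Since $w\equiv1$ on $\Omega_1$, $D_p(\varphi_j;\Omega_1)=D_p(\varphi_j-w;\Omega_1)\to0$. Hence $C_p(K,\infty;\Omega_1)=0$, contradicting non-parabolicity.
\end{enumerate}
This is exactly the content of your ``cleaner alternative'', which is correct once written out. In the variational version you would additionally need a truncation argument to get $0\le h\le1$. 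In the exhaustion version this bound is automatic. In fact $h_k$ decreases in $k$, because $w$ is $p$-superharmonic on $\Omega$.

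\textbf{Comparison with the paper.} The paper does not work with $w$ itself. It takes the potentials $v_n$ of $(\Omega_1\setminus B_{n+1},\Omega^c)$, pushing the plate to infinity. It bounds $D_p(v_n)$ below uniformly by $C_p(B_{n_0}\cap\partial\Omega,\infty;B_{n_0}\cup\Omega_1)>0$. It then carries this bound to the limit through the boundary-flux identity of Lemma~\ref{lem_eng} at $t=0$.

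That route stays within finite Dirichlet problems, comparison and the flux identity. It also yields a quantitative lower bound on the flux of the witness through $\partial\Omega$.

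Your route needs the Banach-space structure: reflexivity and weak closedness of $\overline{\ell_0(\Omega)}$. In return it makes the mechanism transparent. The witness is the $p$-harmonic part of the condenser potential $w$. It is nonzero exactly because $w\notin\overline{\ell_0(\Omega)}$, which is a restatement of the non-parabolicity of $\Omega_1$.
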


\begin{proof}
Let \( \Omega \subset V \) be a \( D_p \)-massive set, and let \( u \) be an admissible \( p \)-harmonic function associated with \( \Omega \). Define the level set
\[
\Omega_1 := \left\{ x \in \Omega : u(x) < \frac{1}{2} \right\}.
\]
We claim that \( \Omega_1 \) is not $p$-parabolic and satisfies \( \operatorname{cap}_p(\Omega_1, \Omega) < \infty \).

For every \(x\in \Omega_1\), we have
\[
\Delta_{\Omega_1,p} u(x)\le \Delta_p u(x)=0,
\]
because \(u\) is \(p\)-harmonic on \(\Omega\), and every edge \(xy\in \partial_e\Omega_1\) with \(x\in \Omega_1\) satisfies \(u(y)\ge \frac12>u(x)\). Thus \(u\) is a positive \(p\)-superharmonic function on \(\Omega_1\). Since \(u\) is nonconstant on \(\Omega_1\), Proposition \ref{prop:parabolic_equiv} and the remark following it imply that \(\Omega_1\) is not $p$-parabolic.

Next define
\[
\eta:=\min\{1,\, 2(1-u)\}.
\]
Then \(\eta=0 \) on \(\Omega^c\) and \(\eta\equiv 1\) on \(\Omega_1\). Hence \(\eta\) is admissible for \(\operatorname{cap}_p(\Omega_1,\Omega)\). Moreover, we have
\[
D_p(\eta)\le 2^p D_p(u)<\infty.
\]
Therefore,
\[
\operatorname{cap}_p(\Omega_1, \Omega)\le D_p(\eta)<\infty.
\]

Now we assume \( \Omega_1 \subset \Omega \) is a non-parabolic subset such that
\[
\operatorname{cap}_p( \Omega_1,  \Omega) < \infty,
\]
and let \( \{B_k\} \) be an exhaustion of \( V \) by finite subsets. For each integer \( k \geq 2 \), define
\[
\Omega_k := \Omega_1 \cap B_{k+1}^c.
\]

For integers \( k > n+1 \), let \( v_n^k \) denote the potential function of the capacitor \( (\Omega_n, \Omega, B_k) \). Then,
\[
D_p(v_n^k;B_k) = \operatorname{cap}_p( \Omega_n,  \Omega, B_k).
\]

By a diagonal argument, the sequence \( \{v_n^k\} \) has a pointwise limit function \( v_n \). Since
\[
\operatorname{cap}_p(\Omega_n, \Omega, B_k) \leq \operatorname{cap}_p( \Omega_n,  \Omega, V),
\]
applying Fatou's lemma yields
\[
D_p(v_n) \leq \operatorname{cap}_p(\Omega_n, \Omega, V).
\]
Moreover, since \( v_n \) is admissible for the capacitor \( ( \Omega_n,  \Omega) \), we have equality:
\[
D_p(v_n) = \operatorname{cap}_p( \Omega_n, \Omega).
\]

By the maximum principle, for sufficiently large \( k \), we have \( v_n^k \leq v_{n+1}^k \). Taking limits, it follows that the sequence \( \{v_n\} \) is increasing, and we define
\[
v := \lim_{n \to \infty} v_n.
\]
Clearly, \(v\) is p-harmonic on \(\Omega\) with \(v\equiv0\) on \(\partial \Omega\), and by Fatou's lemma, 
\begin{align*}
    D_p(v) \le \liminf_{n \to \infty}D_p(v_n) = \liminf_{n\to\infty}\operatorname{cap}_p(\Omega_n, \Omega) \le \operatorname{cap}_p(\Omega_1, \Omega) < \infty.
\end{align*}
We will finish the proof by showing \(v\) is not constant.

Now, choose a sufficiently large integer \( n_0 \) such that both \( B_{n_0} \cap \Omega \) and \( B_{n_0} \cap \Omega_1 \) are non-empty. Then, for all \(n> n_0\),
\[
\operatorname{cap}_p(\Omega_n, \Omega)=\operatorname{C}_p(\Omega_n, \partial \Omega) \ge \operatorname{C}_p(\Omega_n, B_{n_0} \cap \partial\Omega)= \operatorname{C}_p(B_{n_0} \cap \partial\Omega, \Omega_n).
\]
Noting \(\Omega_n := \Omega_1 \cap B_{n+1}^c\), we have
\begin{align*}
    \operatorname{C}_p(B_{n_0} \cap \partial\Omega, \Omega_n)
\geq \operatorname{C}_p(B_{n_0} \cap \partial\Omega, \Omega_n; B_{n_0} \cup \Omega_1)=\operatorname{C}_p(B_{n_0} \cap \partial\Omega, B_n^c;B_{n_0} \cup \Omega_1).
\end{align*}

Since \( \Omega_1 \) is non-parabolic and \(B_{n_0}\) is finite, the set \( B_{n_0} \cup \Omega_1 \) is also non-parabolic. Therefore, for all \( n \geq n_0 \), we have
\[
\operatorname{C}_p(B_{n_0} \cap \partial\Omega, \infty;B_{n_0} \cup \Omega_1)=:C> 0.
\]
It follows that
\begin{align*}
    \operatorname{C}_p(B_{n_0} \cap \partial\Omega, \Omega_n)\ge C.
\end{align*}

Hence, by Lemma \ref{lem_eng} with \(t=0\), we conclude that
\[
 D_p(v_n) = \operatorname{cap}_p(\Omega_n, \Omega)=\sum_{x \in \Omega}\sum_{y\in \partial \Omega}|v_n(x)-v_n(y)|^{p-1}\mu_{xy}\ge C.
\]

Noting that \(v_n=0\) on \(\partial \Omega\) and \(\{v_n\} \) is increasing, by the monotone convergence theorem, we have
\begin{align*}
    \sum_{x \in \Omega}\sum_{y\in \partial \Omega}v(x)^{p-1}\mu_{xy}=\lim_{n \to \infty}\sum_{x \in \Omega}\sum_{y\in \partial \Omega}v_n(x)^{p-1}\mu_{xy} \ge C,
\end{align*}
Hence \(v\) is not constant. We complete the proof.
\end{proof}

\section{Examples}\label{example}

We conclude with examples illustrating the two criteria proved above and their
connection with familiar lattice situations. First, we discuss some graphs
that satisfy the volume doubling condition \hyperref[VD]{$\text{(VD)}$} and the
weak $(1,p)$-Poincaré inequality \hyperref[PI]{$(P_p)$}. For instance, Cayley
graphs of discrete finitely generated groups of polynomial growth satisfy
\hyperref[VD]{$\text{(VD)}$} and \hyperref[PI]{$(P_1)$} (see \cite{CS93},
\cite{CS95}).

In this section, we study massive sets on $\mathbb{Z}^d$ equipped with the standard counting measure $\mu$ and edge weights $\frac{1}{2d}$. Before doing so, we first estimate the $p$-capacity of cylinders. 

\begin{lemma}\label{lem_capset}
    For any $d > p > 1$, let $E \subset \mathbb{Z}^{d}$. Then
    \begin{align}\label{cap_set}
        \operatorname{cap}_p(E) \gtrsim \mu(E)^{\frac{d-p}{d}}.
    \end{align}
\end{lemma}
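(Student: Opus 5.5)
We prove \eqref{cap_set} by combining the Sobolev inequality on $\mathbb{Z}^d$ with the definition of capacity. We may assume $E$ is finite. For infinite $E$ the capacity dominates $\operatorname{cap}_p(E')$ for every finite $E'\subset E$, so the bound follows by monotonicity, and both sides are infinite when $\mu(E)=\infty$.

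The key input is the global Sobolev inequality on $\mathbb{Z}^d$ for $1<p<d$ with exponent $p^*=\frac{dp}{d-p}$:
\begin{equation*}
\Big(\sum_{x\in\mathbb{Z}^d}|f(x)|^{p^*}\Big)^{1/p^*}\lesssim \Big(\sum_{x,y}\mu_{xy}|\nabla_{xy}f|^p\Big)^{1/p},\qquad f\in\ell_0(\mathbb{Z}^d).
\end{equation*}
We would derive it in one of two ways.

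\emph{Discrete Gagliardo--Nirenberg.} First prove the case $p=1$ with exponent $\frac{d}{d-1}$ by the standard Loomis--Whitney argument. Each $|f(x)|$ is bounded by the sum of $|\nabla f|$ along the coordinate line through $x$ in each of the $d$ directions; take the geometric mean of these bounds and apply generalized H\"older. Then pass to general $p$ by applying the $p=1$ inequality to $|f|^{s}$ with $s=\frac{p(d-1)}{d-p}$. For this we use the two-point bound
\[
\big||a|^s-|b|^s\big|\le s\,|a-b|\,(|a|^{s-1}+|b|^{s-1}),
\]
followed by H\"older's inequality.

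\emph{Via the paper's conditions.} Alternatively, $\mathbb{Z}^d$ satisfies (VD) and $(P_p)$ with $\mu(B(x,r))\asymp r^d$, and it satisfies $(p_0)$. The Sobolev inequality (SI) on balls then extends to a global one through Haj\l asz--Koskela type truncation arguments. We regard this as the cleanest route to cite.

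With the Sobolev inequality in hand, the capacity bound follows directly. Let $f\in\ell_0(\mathbb{Z}^d)$ with $f=1$ on $E$. Then
\[
\mu(E)^{\frac{d-p}{dp}}\le \|f\|_{\ell^{p^*}}\lesssim D_p(f)^{1/p}.
\]
Raising both sides to the power $p$ gives $D_p(f)\gtrsim\mu(E)^{\frac{d-p}{d}}$, and taking the infimum over admissible $f$ yields \eqref{cap_set}.

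The main obstacle is the Sobolev inequality itself. The most hands-on approach is the discrete Loomis--Whitney/Gagliardo--Nirenberg argument for $p=1$. The discrete chain-rule step used to reach general $p$ is routine but needs the two-point bound above.
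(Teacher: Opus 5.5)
Your proposal is correct and follows the paper's outline: the capacity bound is read off from the global Sobolev inequality $\|f\|_{\ell^{p^*}}\lesssim D_p(f)^{1/p}$, $p^*=\frac{dp}{d-p}$, on $\mathbb{Z}^d$. The paper supplies both ingredients by citation. It invokes the full equivalence between the capacity bound and the Sobolev inequality (Bakry--Coulhon--Ledoux--Saloff-Coste, Coulhon--Koskela). It obtains the Sobolev inequality from (VD), the $(p,p)$-Poincar\'e inequality and the lower volume bound $\mu(B(x,r))\gtrsim r^d$. You instead use only the trivial direction of that equivalence: test the Sobolev inequality with an admissible $f$, which equals $1$ on $E$. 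Your main route then proves the Sobolev inequality by hand, via discrete Loomis--Whitney for $p=1$ and the substitution $|f|^s$ with $s=\frac{p(d-1)}{d-p}$. The exponents close up correctly, since $s\frac{d}{d-1}=(s-1)\frac{p}{p-1}=p^*$. Your argument is elementary and self-contained, but it relies on the product structure of $\mathbb{Z}^d$. The paper's route rests on heavier external results, but it applies unchanged to any graph with doubling, Poincar\'e and a uniform lower bound $\mu(B(x,r))\gtrsim r^Q$, $Q>p$. If you take your second route, note that a local inequality (SI) with an arbitrary $\kappa>1$ does not globalize. You need the sharp exponent $\kappa=\frac{d}{d-p}$, which the Haj\l asz--Koskela argument gives when $\mu(B(x,r))\asymp r^d$ for $r\ge1$; only then does the scaling factor $r\,\mu(B)^{-1/d}$ stay bounded as the radius grows.
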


\begin{proof}
    For $1 < p < d$, it is well-known that the capacity lower bound \eqref{cap_set} is equivalent to the global Sobolev inequality 
    \begin{align}\label{gsob}
       \left( \sum_{x\in \mathbb{Z}^d} |f|^{\frac{dp}{d-p}}\right)^{\frac{d-p}{dp}} \le C\left( \sum_{x,y\in \mathbb{Z}^d} |\nabla_{xy} f|^{p}\right)^{\frac{1}{p}} \quad \text{for all $f\in \ell_0(\mathbb{Z}^d)$}
    \end{align}
    (see, for instance, \cite[Section 10.1]{BCLS95} and \cite[Section 5.1]{CK04}).

    Furthermore, as discussed in \cite[Section 2.6]{CK04}, $\mathbb{Z}^{d}$ satisfies \eqref{gsob} because it admits the volume doubling property \hyperref[VD]{$\text{(VD)}$}, the \((p,p)\) Poincaré inequality and the volume lower bound condition \eqref{lvq}. Here, graph $(V,\mu)$ is said to satisfy the volume lower bound condition \eqref{lvq} if there exist constants $C, Q > 0$ such that
    \begin{align}\label{lvq}
        V(B(x,r)) \ge C r^{Q} \quad \text{for all $x\in V$ and $r>0$}.\tag{$LV_Q$}
    \end{align}
    And a direct calculation shows that the $(1,1)$-Poincaré inequality \hyperref[PI]{$(P_1)$} implies the $(p,p)$-Poincaré inequality:
    \begin{equation*}
    	\left(\frac{1}{\mu(B)}\sum_{x\in B} |f(x)-f_B|^p\mu(x) \right)^{\frac{1}{p}} \le C r \left(\frac{1}{\mu(2B)}\sum_{x,y\in 2B}|f(y)-f(x)|^p\mu_{xy}\right)^{\frac{1}{p}}.
    \end{equation*}

    Since $\mathbb{Z}^d$ has polynomial volume growth of dimension $d$, it satisfies \eqref{lvq}, which yields the global Sobolev inequality \eqref{gsob} and, consequently, \eqref{cap_set}.
\end{proof}

\begin{lemma}\label{cap_c}
    Assume $d > p+1 > 2$ and $h \ge r$. Define the cylinder $C = \{ (x_1, x') \in \mathbb{Z} \times \mathbb{Z}^{d-1} : 0 \le x_1 \le h, \|x'\| \le r \}$, then
    \begin{equation}
        \operatorname{cap}_p(C) \asymp h r^{d-p-1},
    \end{equation}
     where $\|x'\|=\sqrt{x_2^2+\dots x_{d}^2}$ denotes the standard Euclidean norm in $\mathbb{Z}^{d-1}$.
\end{lemma}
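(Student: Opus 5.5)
We prove the two bounds $\operatorname{cap}_p(C)\lesssim h r^{d-p-1}$ and $\operatorname{cap}_p(C)\gtrsim h r^{d-p-1}$ separately: the upper bound by an explicit test function, the lower bound by chopping $C$ into cubes and combining Lemma \ref{lem_capset} with a localization argument. Throughout, constants depend only on $d$ and $p$. We write points as $x=(x_1,x')$ and use that $\operatorname{dist}$ to $C$ is comparable to $\max\{(x_1-h)_+,(-x_1)_+,(\|x'\|-r)_+\}$.

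\textbf{Upper bound.} The plan is to test the capacity with
\[
f(x)=\min\Bigl\{1,\ \bigl(1+\operatorname{dist}(x,C)/r\bigr)^{-(d-p)/(p-1)}\Bigr\},
\]
truncated far away and approximated by finitely supported functions. By monotone convergence, its energy bounds $\operatorname{cap}_p(C)$. Since $|\nabla f|\lesssim r^{-1}(1+s/r)^{-(d-1)/(p-1)}$ at distance $s$ from $C$, we split the energy into two regions.

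\begin{enumerate}
\item The \emph{tube} $\{0\le x_1\le h\}$. The shell $\{\operatorname{dist}(x,C)\approx s\}$ has size about $h(r+s)^{d-2}$. Each such shell contributes roughly $h\,r^{-p}(1+s/r)^{-p(d-1)/(p-1)}(r+s)^{d-2}$. Summing over $s$ gives $\asymp h r^{d-1-p}$, provided $p(d-1)/(p-1)>d-1$, which holds.
\item The two \emph{end caps}. Here the function behaves like a potential of a ball of radius about $r$. These contribute $\lesssim r^{d-p}\le h r^{d-p-1}$, using $h\ge r$.
\end{enumerate}

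A simpler alternative is to cover $C$ by about $h/r$ balls of radius $2r$ and use subadditivity of capacity together with $\operatorname{cap}_p(B(x,2r))\asymp r^{d-p}$. This comes from Lemma \ref{cap_ballupb} with $R=4r$ and the standard fact $\operatorname{cap}_p(B_{2r})\le \operatorname{cap}_p(B_{2r},B_{4r})$. It gives the upper bound $\lesssim (h/r)\,r^{d-p}=hr^{d-p-1}$ immediately, so I would use this.

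\textbf{Lower bound.} Let $u$ be the equilibrium potential of $C$, or any $f\in\ell_0$ with $f=1$ on $C$. Partition $\{0\le x_1\le h\}$ into $N\asymp h/r$ slabs $S_j$ of width $r$. Let $T_j=\{x: \operatorname{dist}(x,C\cap S_j)\le r\}$. The $T_j$ have bounded overlap, so
\[
\sum_j D_p(f;T_j)\lesssim D_p(f).
\]
It remains to show $D_p(f;T_j)\gtrsim r^{d-p}$ for each $j$. Inside $T_j$ we compare with the capacity of a cube $Q_j\subset C\cap S_j$ of side $\asymp r$ relative to its $r$-neighbourhood, $\operatorname{cap}_p(Q_j,2Q_j)\gtrsim r^{d-p}$. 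This follows from Lemma \ref{cap_balllob}, since $\mathbb{Z}^d$ satisfies \hyperref[VD]{(VD)} and \hyperref[PI]{$(P_p)$}.

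The difficulty is that $f$ restricted to $T_j$ need not vanish on $\partial T_j$. Here I would use a Poincaré/average dichotomy. Either the average of $f$ over the annular part $T_j\setminus C$ is at most $1/2$, and then Poincaré on the ball containing $T_j$ forces energy $\gtrsim r^{d-p}$ there. Or the average exceeds $1/2$ for many $j$. In the second case $f\ge 1/2$ on a set of measure $\gtrsim N r^d$ in the $r$-neighbourhood of $C$, hence $f\ge 1/4$ on a further large set. Lemma \ref{lem_capset} then gives
\[
D_p(f)\gtrsim (Nr^{d})^{(d-p)/d}.
\]
This alone is weaker than $Nr^{d-p}$, so I need to repeat the dichotomy at doubling scales $2^k r$ out to scale $h$. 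At each scale where the average stays large, one uses the Poincaré energy of that annulus. At the first scale $\rho$ where the average drops below a threshold, one collects energy $\gtrsim (h/\rho)\rho^{d-p}$; this is $\ge h r^{d-p-1}$ only because $d-p-1>0$ makes $\rho^{d-p-1}$ increasing in $\rho$. Past scale $h$, the set becomes ball-like, and Lemma \ref{lem_capset} gives $\gtrsim h^{d-p}\ge hr^{d-p-1}$.

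I expect this multiscale localization, and making the threshold bookkeeping rigorous, to be the main obstacle. The hypothesis $d>p+1$ is used exactly here.
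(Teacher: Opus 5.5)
\textbf{Upper bound.} This part is correct, and it differs from the paper's argument. The paper tests $\operatorname{cap}_p(C)$ with the product cut-off $\phi(x_1)\eta(x')$ and counts the energy in three regions. Your covering argument is shorter: about $h/r$ balls cover $C$, capacity is subadditive (via $\max\{f_1,f_2\}$), and Lemma \ref{cap_ballupb} bounds each ball. One detail: the balls need radius $c_d r$ with $c_d$ depending on $d$. Graph distance on $\mathbb{Z}^d$ is the $\ell^1$ distance, while the cross-section of $C$ is a Euclidean ball.

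\textbf{Lower bound.} This is not established. The per-slab claim $D_p(f;T_j)\gtrsim r^{d-p}$ is false for general admissible $f$. For example, take $f\equiv 1$ on a $10r$-neighbourhood of $C$; then $D_p(f;T_j)=0$. You notice this, but the multiscale dichotomy meant to repair it is only a sketch, and it has concrete holes:
\begin{enumerate}
\item The stopping scale depends on the slab. So there is no single $\rho$ at which you collect $(h/\rho)\rho^{d-p}$.
\item The Poincar\'e step at the stopping scale needs a quantitative gap between the averages at consecutive scales. An average $>1/2$ at one scale and $\le 1/2$ at the next gives no gap.
\item The balls on which you apply Poincar\'e at different scales are nested, so their energies cannot simply be added.
\end{enumerate}
To make this rigorous you would need annular regions around the axis with bounded overlap across scales. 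You would also need a H\"older summation over scales of the form $\sum_k\delta_k^p2^{k(d-p-1)}\gtrsim(\sum_k\delta_k)^p$, which is where $d-p-1>0$ would really enter. None of this is carried out, so as written the proposal proves no lower bound.

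\textbf{The missing idea: slicing.} Let $f\in\ell_0(\mathbb{Z}^d)$ with $f=1$ on $C$. Discard all edges in the $x_1$-direction and keep only the edges inside each hyperplane $\{x_1=n\}$, $0\le n\le h$. On each such hyperplane, $f(n,\cdot)$ is a finitely supported function on $\mathbb{Z}^{d-1}$ that equals $1$ on $\{\|x'\|\le r\}$. Its energy is therefore at least the $(d-1)$-dimensional capacity of that ball, which is $\gtrsim r^{d-1-p}$ by Lemma \ref{lem_capset} applied in $\mathbb{Z}^{d-1}$. The edge sets of different slices are disjoint, so summing over the $h+1$ slices gives $\operatorname{cap}_p(C)\gtrsim h\,r^{d-p-1}$. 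This is exactly where the hypothesis $d>p+1$ is used: it is the condition $d-1>p$ that Lemma \ref{lem_capset} needs in dimension $d-1$. This is the paper's argument, and it replaces your whole multiscale scheme.
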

\begin{proof}
    For the upper bound, we define a product cut-off function $\psi(x_1, x') = \phi(x_1)\eta(x')$. Let $C_1 = [0, h] \cap \mathbb{Z}$ and $C' = \{x' \in \mathbb{Z}^{d-1} : \|x'\| \le r\}$. Let $d_1(x_1) = \operatorname{dist}(x_1, C_1)$ and $d'(x') = \operatorname{dist}(x', C')$ denote the Euclidean distances on $\mathbb{Z}$ and $\mathbb{Z}^{d-1}$, respectively. We define:
    \[
    \eta(x') := \begin{cases} 
        1, & \text{if } d'(x') = 0, \\ 
        1 - \frac{d'(x')}{r}, & \text{if } 0 < d'(x') \le r, \\
        0, & \text{if } d'(x') > r,
    \end{cases}
    \quad \text{and} \quad
    \phi(x_1) := \begin{cases} 
        1, & \text{if } d_1(x_1) = 0, \\ 
        1 - \frac{d_1(x_1)}{r}, & \text{if } 0 < d_1(x_1) \le r, \\
        0, & \text{if } d_1(x_1) > r.
    \end{cases}
    \]
    Given that $h \ge r$, the $p$-capacity is bounded by the $p$-energy of $\psi$:
    $$\operatorname{cap}_p(C) \le \sum_{x \sim y} |\psi(x) - \psi(y)|^p.$$
    
    The sum can be partitioned into three regions: for $d_1 = 0$ and $0 < d' \le r$, the energy of $\psi$ is less than $h \cdot (2r)^{d-1} \cdot r^{-p} \asymp h r^{d-p-1}$. Next, the region where $d' = 0$ and $0 < d_1 \le r$ contributes $r^{d-1} \cdot r \cdot r^{-p} = r^{d-p}$. Finally, the region where $0 < d_1, d' \le r$ also contributes an energy of order $r^{d-p}$.

  Combining these contributions and noting that $h \ge r$:
  $$\operatorname{cap}_p(C) \lesssim r^{d-p} + h r^{d-p-1} \lesssim h r^{d-p-1}.$$
  This establishes the upper bound.

Moreover, observe that for any $v \in \mathcal{A}(C)$, we have
$$
\sum_{x,y\in \mathbb{Z}^d} |\nabla_{xy}v|^p \ge \sum_{n = 0}^{h} \left( \sum_{x' \sim y' \text{ in } \mathbb{Z}^{d-1}} |v(n, y') - v(n, x')|^p \right).
$$
Noting $d > p+1$, using Lemma \ref{lem_capset}, we have  $\operatorname{cap}_p^{(d-1)}(B_r) \gtrsim r^{d-1-p}$. Consequently, we obtain
\begin{equation}\label{cl_lo}
    \operatorname{cap}_p(C) \ge h \operatorname{cap}_p^{(d-1)}(B_{f(n)}) \gtrsim hr^{d-1-p},
\end{equation}
which completes the proof.
\end{proof}

Next, we discuss a widely studied example, the Thorn set $\mathcal{T}$ (see, \cite{IM60, gri88} for p=2 in graphs and manifolds setting and \cite{BC15} for similar example in $\alpha$-stable random walk).

\begin{example}\label{thorn_thick}
Let $d > p + 1$ and $f: \mathbb{N} \to (0, \infty)$ be a monotonically increasing sequence. Define the Thorn set $\mathcal{T}$ as
\begin{equation*}
    \mathcal{T} = \{(x_1, x') \in \mathbb{Z}^d : x_1 \ge 0, \|x'\| \le f(x_1)\}.
\end{equation*}
Then $\mathcal{T}$ is $p$-thick at $\infty$ if and only if
\begin{equation}\label{eq:thorn_series}
    \sum_{n=1}^{\infty} \left( \frac{f(2^n)}{2^n} \right)^{\frac{d-p-1}{p-1}} = \infty.
\end{equation}
\end{example}

\begin{proof}
Let $B_n = B(o, 2^n)$ be the ball centered at the origin with radius $r_n = 2^n$, and let $A_n = \mathcal{T} \cap B_n$. In $\mathbb{Z}^d$, the volume growth satisfies $\mu(B_n) \asymp 2^{nd}$. By Theorem \ref{wcm}, $\mathcal{T}$ is $p$-thick if and only if
\begin{equation}\label{eq:wiener_zd}
    \sum_{n=1}^{\infty} \left( 2^{n(p-d)} \operatorname{cap}_{p}(A_n) \right)^{\frac{1}{p-1}} = \infty.
\end{equation}
For simplicity of notation, let $q = \frac{d-p-1}{p-1}$. We analyze the criterion by dividing the growth rate of $f$ into two cases.

\noindent \textbf{Case 1:} $\limsup_{n \to \infty} \frac{f(n)}{n} = c > 0$. \\
In this case, the test series \eqref{eq:thorn_series} diverges trivially. We will show that \eqref{eq:wiener_zd} also diverges. 

Along a subsequence $\{n_k\}$ where $f(2^{n_k-1}) \gtrsim c 2^{n_k-1}$, we construct a ball $B_c = B(x_c, R)$ with center $x_c = (3 \cdot 2^{n_k-2}, 0, \dots, 0)$ and radius $R = \min\{c, 1\} 2^{n_k-2}$. 

For any $y = (y_1, y') \in B_c$, we have $\|y\| \le R + 3 \cdot 2^{n_k-2} \le 2^{n_k}$, which means $B_c \subset B_{n_k}$. Moreover, noting that $y_1 \ge 3 \cdot 2^{n_k-2} - R \ge 2^{n_k-1}$ and $f$ is increasing, we have $\|y'\| \le R \le c 2^{n_k-2} < f(2^{n_k-1}) \le f(y_1)$. Hence, $B_c \subset A_{n_k}$. 

Using the capacity lower bound for a ball in $\mathbb{Z}^d$, we obtain
\begin{equation*}
    \operatorname{cap}_p(A_{n_k}) \ge \operatorname{cap}_p(B_c) \gtrsim R^{d-p} \asymp (2^{n_k})^{d-p}.
\end{equation*}
Substituting this into \eqref{eq:wiener_zd}, we have $2^{n_k(p-d)} \operatorname{cap}_p(A_{n_k}) \gtrsim 1$, which implies that the Wiener series diverges.

\noindent \textbf{Case 2:} $\lim_{n \to \infty} \frac{f(n)}{n} = 0$. \\
We estimate $\operatorname{cap}_p(A_n)$ by using two cylinders $C_n$ and $C'_n$.

For the upper bound, $A_n$ is trivially contained in $C_n = \{(x_1, x') : 0 \le x_1 \le 2^n, \|x'\| \le f(2^n)\}$. By Lemma \ref{cap_c}, we have
\begin{equation*}
    \operatorname{cap}_p(A_n) \le \operatorname{cap}_p(C_n) \lesssim 2^n f(2^n)^{d-p-1}.
\end{equation*}

For the lower bound, let $C'_n = \{(x_1, x') : 2^{n-1} \le x_1 \le \frac{3}{4} 2^n, \|x'\| \le f(2^{n-1})\}$. The condition $\lim_{n \to \infty} f(n)/n = 0$ ensures that for large enough $n$, $f(2^{n-1}) \le \frac{1}{8} 2^n$. Thus, for any point in $C'_n$, its distance to the origin is bounded by $x_1 + \|x'\|_1 \le \frac{3}{4} 2^n + \frac{1}{8} 2^n < 2^n$. Hence, $C'_n \subset A_n$. Applying Lemma \ref{cap_c} to $C'_n$ yields
\begin{equation*}
    \operatorname{cap}_p(A_n) \ge \operatorname{cap}_p(C'_n) \gtrsim 2^n f(2^{n-1})^{d-p-1}.
\end{equation*}

Substituting these bounds into \eqref{eq:wiener_zd} produces two series. The upper bound produces $\sum_{n=1}^{\infty} \left( \frac{f(2^n)}{2^n} \right)^q$. The lower bound produces
\begin{equation*}
    \sum_{n=1}^{\infty} \left( \frac{2^n f(2^{n-1})^{d-p-1}}{2^{n(d-p)}} \right)^{\frac{1}{p-1}} = \left(\frac{1}{2}\right)^q \sum_{n=1}^{\infty} \left( \frac{f(2^{n-1})}{2^{n-1}} \right)^q.
\end{equation*}
Therefore, we find that \eqref{eq:wiener_zd} diverges if and only if the test series \eqref{eq:thorn_series} diverges.
\end{proof}

\begin{example}\label{ex:coordinate_axes}
    Assume $d > p>1$ and let $A = \mathbb{Z} \times \{0\}^{d-1}$ denote a one-dimensional coordinate axis in $\mathbb{Z}^d$. The massiveness of its complement $\mathbb{Z}^d \setminus A$ depends strictly on the dimension $d$. 
    If $d = p + 1$, then $\mathbb{Z}^d \setminus A$ is not $p$-massive. 
    However, if $d > p + 1$, then $\mathbb{Z}^d \setminus A$ is $p$-massive but not $D_p$-massive.
\end{example}

\begin{proof}
    Let $B_n = B(o, 2^n)$ be the ball centered at the origin with radius $r_n = 2^n$, and let $A_n = A \cap B_n$. We use Theorem \ref{wcm} with \eqref{wcm2}; that is, $\mathbb{Z}^d \setminus A$ is $p$-massive if and only if
\begin{equation}\label{weiner_axes}
    \sum_{n=1}^{\infty} \left( 2^{n(p-d)} \operatorname{cap}_{p}(A_n,B_n) \right)^{\frac{1}{p-1}} < \infty.
\end{equation}

    First, for $d=p+1$, we calculate $\operatorname{cap}_p(\{x_0\},B'_{n})$ on $\mathbb{Z}^{p}$, where $x_0\in \mathbb{Z}^{p}$ and $B'_{n}=B(x_0,2^{n})$ on $\mathbb{Z}^{p}$.

    Let $u_n$ be the potential function of $(\{x_0\},B'_{n+1})$. By Lemma \ref{lem_eng} with $t=1$, we have
    \begin{align*}
        \operatorname{cap}'_p(\{x_0\},B'_{n+1}) = D'_p(u_n) = -\mu(x_0)\Delta'_p u_n(x_0) := c_n.
    \end{align*}
    Letting $v_n := c_n^{-\frac{1}{p-1}} u_n$, we have $-\mu(x_0)\Delta_p v_n(x_0)=1$. Then, applying Lemma \ref{suph_upb}, we obtain
    \begin{align*}
        v_n(x_0) \lesssim  \left( \frac{1}{\mu'(B_n)} \sum_{x \in B_n} u(x)^p  \right)^{1/p} + \sum^{n+1}_{i=1}\left(\frac{r_i^p}{\mu'(B_i)}\right)^{\frac{1}{p-1}}.
    \end{align*}

    Using the $(p,p)$-Poincaré inequality:
      \begin{equation*}
       \left(\frac{1}{\mu(B)}\sum_{x\in B} |f(x)-f_B|^p\mu(x) \right)^{\frac{1}{p}} \le C r \left(\frac{1}{\mu(2B)}\sum_{x,y\in 2B}|f(y)-f(x)|^p\mu_{xy}\right)^{\frac{1}{p}},
    \end{equation*}
   we have 
    \begin{align*}
        \left( \frac{1}{\mu(B'_n)} \sum_{x \in B'_n} u(x)^p \mu(x) \right)^{1/p} \lesssim \left( \frac{r_n^p}{\mu(B'_n)} \sum_{x,y \in B'_{n+1}} |\nabla_{xy}u|^p \mu(x) \right)^{1/p} \asymp D'_{p}(v_n)=1,
    \end{align*}
   and $\sum\limits^{n+1}_{i=1}\left(\frac{r_i^p}{\mu'(B_i)}\right)^{\frac{1}{p-1}} \asymp n$, therefore
   \begin{align*}
       c_n^{-\frac{1}{p-1}} u_n(x_0) = v_n(x_0) \lesssim n,
   \end{align*}
   which implies that
   \begin{align*}
       \operatorname{cap}'_p(\{x_0\},B'_{n}) \ge c_n \gtrsim n^{1-p}.
   \end{align*}

   Using the same argument as in \eqref{cl_lo}, we obtain
   \begin{align*}
       \operatorname{cap}_p(A_n,B_{n}) \gtrsim 2^n n^{1-p}.
   \end{align*}
   Substituting this into \eqref{weiner_axes}, we have
  \begin{align*}
      \sum_{n=1}^{\infty} \left( 2^{n(p-d)} \operatorname{cap}_{p}(A_n,B_n) \right)^{\frac{1}{p-1}} \gtrsim \sum_{n=1}^{\infty}n^{-1}.
  \end{align*}
  Hence, $\mathbb{Z}^d \setminus A$ is not $p$-massive for $d=p+1$. 

   For $d>p+1$, $\mathbb{Z}^d \setminus A$ is $p$-massive by Example \ref{thorn_thick}.

   For $D_p$ massiveness, we derive it by example \ref{exp_dp}.
\end{proof}

\begin{example}\label{exp_dp}
    Let $p > 1$ and $M \subset \mathbb{Z}^d$. If $\mu(M^c) = \infty$, then $M$ is not $D_p$-massive.
\end{example}

\begin{proof}
    If $d \le p$, then $\mathbb{Z}^d$ is $p$-parabolic, which immediately implies that $M$ is not $D_p$-massive.
     
    If $d > p$, for any non-$p$-parabolic set $\Omega_1$ such that $\Omega_1\subset M$, let
    \[
    A_n = M^c \cap B_n.
    \]
    Then
    \begin{align*}
    	\operatorname{cap}_{p}(\Omega_1,M)
    	&=\operatorname{C}_{p}(M^c, \Omega_1)
    	\ge \lim_{n \to \infty} \operatorname{C}_{p}(A_n, \Omega_1) \\
    	&=\lim_{n \to \infty} \operatorname{cap}_{p}(A_n,\Omega_1^c)
    	\ge \lim_{n \to \infty} \operatorname{cap}_{p}(A_n).
    \end{align*}
    On the other hand, Lemma \ref{lem_capset} yields
    \begin{align*}
    	\operatorname{cap}_{p}(A_n)
    	\gtrsim \mu(A_n)^{\frac{d-p}{d}},
    \end{align*}
    and therefore
    \begin{align*}
    	\operatorname{cap}_{p}(\Omega_1,M)
    	\ge \lim_{n \to \infty} \operatorname{cap}_{p}(A_n)
    	\gtrsim \lim_{n \to \infty} \mu(A_n)^{\frac{d-p}{d}}
    	= \infty.
    \end{align*}
    Since $\Omega_1$ is not $p$-parabolic, the conclusion follows by applying Lemma \ref{dpmcr}. 

\end{proof}

% \begin{example}
%     Let $V$ be the joint of $\mathbb{Z}^{d_1}$ and $\mathbb{Z}^{d_2}$ with respect to an edge. If one of $d_i\le p+1$, then $V$ is $p$ Liouville. If both $d_i> p+1$, $V$ is not $p$ Liouville but $D_p$ Liouville.
% \end{example}

% \begin{proof}
    
% \end{proof}

\section*{Acknowledgments}
The author expresses sincere gratitude to Prof. Grigor'yan from the University of Bielefeld for his invaluable insights and constructive discussions.

\bibliographystyle{plainurl} 
\bibliography{references}

@article{HK01,
  title={Volume growth and parabolicity},
  author={Holopainen, Ilkka and Koskela, Pekka},
  journal={Proc. Amer. Math. Soc.},
  volume={129},
  number={11},
  pages={3425--3435},
  year={2001},
  doi={10.1090/S0002-9939-01-05954-8},
  url={https://doi.org/10.1090/S0002-9939-01-05954-8}
}

@misc{AFS25,
  title={Characterizations of $p$-parabolicity on graphs},
  author={Adriani, Andrea and Fischer, Florian and Setti, Alberto G.},
  year={2025},
  eprint={2507.13696},
  archivePrefix={arXiv},
  primaryClass={math.FA}
}

@article{gri88,
  title={Existence of nontrivial bounded harmonic functions on a {R}iemannian manifold},
  author={Grigor'yan, Alexander},
  journal={Russian Math. Surveys},
  volume={43},
  number={1},
  pages={239--240},
  year={1988},
  publisher={IOP Publishing}
}

@article{hol94,
  title={Rough isometries and $p$-harmonic functions with finite {D}irichlet integral},
  author={Holopainen, Ilkka},
  journal={Rev. Mat. Iberoam.},
  volume={10},
  number={1},
  pages={143--176},
  year={1994},
  doi={10.4171/RMI/148},
  url={https://doi.org/10.4171/RMI/148}
}

@article{HS97phar,
  title={$p$-harmonic functions on graphs and manifolds},
  author={Holopainen, Ilkka and Soardi, Paolo M.},
  journal={Manuscripta Math.},
  volume={94},
  number={1},
  pages={95--110},
  year={1997},
  doi={10.1007/BF02677841},
  url={https://doi.org/10.1007/BF02677841}
}

@article{yam77,
  title={Parabolic and hyperbolic infinite networks},
  author={Yamasaki, Maretsugu},
  journal={Hiroshima Math. J.},
  volume={7},
  number={1},
  pages={135--146},
  year={1977},
  doi={10.32917/hmj/1206135953},
  url={https://doi.org/10.32917/hmj/1206135953}
}

@article{HK95,
  title={Sobolev meets {P}oincar{\'e}},
  author={Haj{\l}asz, Piotr and Koskela, Pekka},
  journal={C. R. Acad. Sci. Paris S{\'e}r. I Math.},
  volume={320},
  pages={1211--1215},
  year={1995}
}

@article{HS97liou,
  title={A strong {L}iouville theorem for $p$-harmonic functions on graphs},
  author={Holopainen, Ilkka and Soardi, Paolo M.},
  journal={Ann. Acad. Sci. Fenn. Math.},
  volume={22},
  pages={205--226},
  year={1997},
  url={https://afm.journal.fi/article/view/134887}
}

@article{IM60,
  title={Potentials and the random walk},
  author={It{\^o}, Kiyosi and McKean, Henry P.},
  journal={Illinois J. Math.},
  volume={4},
  pages={119--132},
  year={1960},
  doi={10.1215/ijm/1255455738},
  url={https://doi.org/10.1215/ijm/1255455738}
}

@article{Lam63,
  title={Wiener's test and Markov chains},
  author={Lamperti, John},
  journal={J. Math. Anal. Appl.},
  volume={6},
  number={1},
  pages={58--66},
  year={1963},
  doi={10.1016/0022-247X(63)90092-1},
  url={https://doi.org/10.1016/0022-247X(63)90092-1}
}

@article{McK61,
  title={A problem about prime numbers and the random walk},
  author={McKean, Henry P.},
  journal={Illinois J. Math.},
  volume={5},
  pages={351},
  year={1961},
  doi={10.1215/ijm/1255629834},
  url={https://doi.org/10.1215/ijm/1255629834}
}

@article{BC15,
  title={$\alpha$-stable random walk has massive thorns},
  author={Bendikov, Alexander and Cygan, Wojciech},
  journal={Colloq. Math.},
  volume={138},
  number={1},
  pages={105--129},
  year={2015},
  doi={10.4064/cm138-1-7},
  url={https://doi.org/10.4064/cm138-1-7}
}

@article{KW92,
  title={The Dirichlet problem at infinity for random walks on graphs with a strong isoperimetric inequality},
  author={Kaimanovich, Vadim A. and Woess, Wolfgang},
  journal={Probab. Theory Related Fields},
  volume={91},
  number={3-4},
  pages={445--466},
  year={1992},
  doi={10.1007/BF01192066},
  url={https://doi.org/10.1007/BF01192066}
}

@article{Kur13,
  title={The Dirichlet Problem for $p$-Harmonic Functions on a Network},
  author={Kurata, Hisayasu},
  journal={Interdiscip. Inform. Sci.},
  volume={19},
  number={2},
  pages={121--127},
  year={2013},
  doi={10.4036/iis.2013.121},
  url={https://doi.org/10.4036/iis.2013.121}
}

@article{Puls14,
  title={The $p$-harmonic boundary and $D_p$-massive subsets of a graph of bounded degree},
  author={Puls, Michael J.},
  journal={Bull. Aust. Math. Soc.},
  volume={89},
  number={1},
  pages={149--158},
  year={2014},
  doi={10.1017/S0004972713000439},
  url={https://doi.org/10.1017/S0004972713000439}
}

@article{Bj09,
  title={Necessity of a Wiener type condition for boundary regularity of quasiminimizers and nonlinear elliptic equations},
  author={Bj{\"o}rn, Jana},
  journal={Calc. Var. Partial Differential Equations},
  volume={35},
  number={4},
  pages={481--496},
  year={2009},
  doi={10.1007/s00526-008-0216-z},
  url={https://doi.org/10.1007/s00526-008-0216-z}
}

@article{KM92,
  title={Degenerate elliptic equations with measure data and nonlinear potentials},
  author={Kilpel{\"a}inen, Tero and Mal{\'y}, Jan},
  journal={Ann. Sc. Norm. Super. Pisa Cl. Sci. (4)},
  volume={19},
  number={4},
  pages={591--613},
  year={1992},
  url={https://www.numdam.org/item/ASNSP_1992_4_19_4_591_0/}
}

@book{Bar17,
  title={Random Walks and Heat Kernels on Graphs},
  author={Barlow, Martin T.},
  volume={438},
  series={London Mathematical Society Lecture Note Series},
  year={2017},
  publisher={Cambridge University Press}
}

@article{KM94,
  title={The Wiener test and potential estimates for quasilinear elliptic equations},
  author={Kilpel{\"a}inen, Tero and Mal{\'y}, Jan},
  journal={Acta Math.},
  volume={172},
  number={1},
  pages={137--161},
  year={1994},
  doi={10.1007/BF02392793},
  url={https://doi.org/10.1007/BF02392793}
}

@article{CS93,
  title={Isop{\'e}rim{\'e}trie pour les groupes et les vari{\'e}t{\'e}s},
  author={Coulhon, Thierry and Saloff-Coste, Laurent},
  journal={Rev. Mat. Iberoam.},
  volume={9},
  number={2},
  pages={293--314},
  year={1993},
  doi={10.4171/RMI/138},
  url={https://doi.org/10.4171/RMI/138}
}

@article{CS95,
  title={Vari{\'e}t{\'e}s riemanniennes isom{\'e}triques {\`a} l'infini},
  author={Coulhon, Thierry and Saloff-Coste, Laurent},
  journal={Rev. Mat. Iberoam.},
  volume={11},
  number={3},
  pages={687--726},
  year={1995},
  doi={10.4171/RMI/190},
  url={https://doi.org/10.4171/RMI/190}
}

@article{Sal97,
  title={Inequalities for $p$-superharmonic functions on networks},
  author={Saloff-Coste, Laurent},
  journal={Rend. Sem. Mat. Fis. Milano},
  volume={65},
  number={1},
  pages={139--158},
  year={1997},
  publisher={Springer}
}

@article{CK04,
  title={Geometric Interpretations of {$L^p$}-Poincar{\'e} Inequalities on Graphs with Polynomial Volume Growth},
  author={Coulhon, Thierry and Koskela, Pekka},
  journal={Milan J. Math.},
  volume={72},
  pages={209--248},
  year={2004},
  publisher={Birkh{\"a}user Basel},
  doi={10.1007/s00032-004-0027-4},
  url={https://doi.org/10.1007/s00032-004-0027-4}
}

@article{BCLS95,
  title={Sobolev inequalities in disguise},
  author={Bakry, Dominique and Coulhon, Thierry and Ledoux, Michel and Saloff-Coste, Laurent},
  journal={Indiana Univ. Math. J.},
  volume={44},
  number={4},
  pages={1033--1074},
  year={1995},
  doi={10.1512/iumj.1995.44.2019},
  url={https://doi.org/10.1512/iumj.1995.44.2019}
}
\end{document}